\documentclass[11pt, a4paper]{article}
\title{The Brownian loop measure on Riemann surfaces and applications to length spectra}
\usepackage{fancyhdr}
\pagestyle{fancy}
\fancyhf{}
\fancyhead[L]{Brownian loop measure and length spectra}
\fancyhead[R]{\thepage}

\usepackage[T1]{fontenc}
\usepackage{graphicx}

\usepackage{mathtools}
\usepackage{lmodern}
\usepackage{amsthm,amsmath,amsfonts,amssymb}
\usepackage{upgreek}
\usepackage{enumerate,enumitem}
\usepackage{cite,url}
\usepackage[margin=.5cm]{caption}




\usepackage{hyperref}
\hypersetup{
    colorlinks=true, 
    linkcolor=black, 
    urlcolor=black, 
    citecolor=black,
    linktoc=all 
}

\addtolength{\textwidth}{1.8cm}
\addtolength{\hoffset}{-.9cm}
\addtolength{\headwidth}{1.8cm}
\addtolength{\textheight}{1.8cm}
\addtolength{\voffset}{-.9cm}

\setlength{\parindent}{1.5 em}
\setlength{\parskip}{0.3em}

\linespread{1.08}
\allowdisplaybreaks

\setlist[enumerate]{topsep = 1ex, leftmargin=.5cm, itemsep= -3pt}
\setlist[itemize]{topsep = 1ex, leftmargin=.5cm, itemsep= -3pt}


\let\OLDthebibliography\thebibliography
\renewcommand\thebibliography[1]{
  \OLDthebibliography{#1}
  \setlength{\parskip}{1pt}
  \setlength{\itemsep}{2pt}
}

\newtheorem{thm}{Theorem}[section]
\newtheorem{cor}[thm]{Corollary}

\newtheorem{lem}[thm]{Lemma}
\newtheorem{prop}[thm]{Proposition}

\theoremstyle{definition}

\newtheorem{remark}[thm]{Remark}

\numberwithin{equation}{section}


\usepackage[dvipsnames]{xcolor}

%






\newcommand{\abs}[1]{\left\lvert #1 \right \rvert}

\newcommand{\brac}[1]{\left \langle #1 \right \rangle}
\newcommand{\norm}[1]{\lVert #1 \rVert}

\newcommand{\mc}[1]{\mathcal{#1}}
\newcommand{\m}[1]{\mathbb{#1}}



\renewcommand\Re{\operatorname{Re}}

\def\PSL{\operatorname{PSL}}

\def\a{\alpha}

\def\g{\gamma}
\def\G{\Gamma}
\def\d{\delta}
\def\D{\Delta}

\def\t{\theta}

\def\s{\sigma}

\def\O{\Omega}
\def\vare{\varepsilon}

\def\Chat{\widehat{\m{C}}}


\def\dd{\mathrm{d}}
\def\vol{\mathrm{vol}}



\def\detz{\mathrm{det}_{\zeta}}

\def\1{\mathbf{1}}


 \newcommand{\splus}{{\scriptstyle +}}



\def\ee{\mathrm{e}}
\def\ii{\mathfrak{i}}


\newcommand{\sP}{\mathcal{P}}
\newcommand{\sG}{\mathcal{G}}
\newcommand{\R}{\mathbb{R}}
\newcommand{\Z}{\mathbb{Z}}
\newcommand{\C}{\mathbb{C}}
\renewcommand{\H}{\mathbb{H}}

\newcommand{\sbs}{\subset}

\def\arccosh{\mathop{\rm arccosh}}

\def\Li{\mathop{\rm Li}}




\author{Yilin Wang\thanks{\protect\url{yilin@ihes.fr} Institut des Hautes \'Etudes Scientifiques, Bures-sur-Yvette, France}
\qquad
Yuhao Xue \thanks{\protect\url{xueyh@ihes.fr} Institut des Hautes \'Etudes Scientifiques, Bures-sur-Yvette, France}}

\begin{document}
\maketitle
\begin{abstract}
    We prove a simple identity relating the length spectrum of a Riemann surface to that of the same surface with an arbitrary number of additional cusps. Our proof uses the Brownian loop measure introduced by Lawler and Werner. In particular, we express the total mass of Brownian loops in a fixed free homotopy class on any Riemann surface in terms of the length of the geodesic representative for the complete constant curvature metric.  This expression also allows us to write the electrical thickness of a compact set in $\mathbb C$ separating $0$ and $\infty$, or the Velling--Kirillov K\"ahler potential, in terms of the Brownian loop measure and the zeta-regularized determinant of Laplacian as a renormalization of the Brownian loop measure with respect to the length spectrum.

\bigskip 

   \noindent \textbf{Keywords:} Brownian loop measure, Riemann surface, geodesic length, identity of length spectrum, determinant of Laplacian
   
   \noindent \textbf{MSC 2020:} 30F60, 57M50, 60J65
\end{abstract}

\section{Introduction}


  The Brownian loop measure in the complex plane $\m C$  was introduced in \cite{LW2004loopsoup,LSW_CR_chordal} for the study of Schramm--Loewner evolutions and has various applications to random conformal geometry, see, e.g., \cite{Sheffield_Werner_CLE,leJan2011markov,ang2020brownian,carfagnini_wang,jego2023conformally,camia2016conformal}. 
   The definition can be extended to any orientable Riemannian surface $(X,g)$ with or without boundary, and we denote it by $\mu_X$. 
  We recall its definition in Section~\ref{sec:BLM}.
  
The Brownian loop measure $\mu_X$ is an infinite but sigma-finite measure on the space of loops in $X$, built from the Brownian motion, that satisfies two important properties: conformal invariance, and the restriction property as shown in \cite{LW2004loopsoup,ang2020brownian}. 
We recall the proof of this fact in Theorem~\ref{thm:CI}. Therefore, $\mu_X$ is considered as a measure on the space of loops on a Riemann surface instead of a Riemannian surface (as only the conformal class of the background metric $g$ matters). Throughout the paper, all loops are oriented. 
To understand how Brownian loops are distributed on a Riemann surface, we first compute the Brownian loop measure in each free homotopy class of closed curves. 


Let $(X,g)$ be a Riemannian surface without boundary (or we replace $X$ with boundary by $X \setminus \partial X$). From the uniformization theorem, if $X$ is not one of $\Chat = \m C \cup\{\infty\}$, $\m C$, $\m C \setminus \{0 \}$, and tori, then $X$ admits a unique complete hyperbolic metric that is conformal to $g$. In particular, $X$ may have cusps and funnels and may be of infinite type. 
We say a free homotopy class of oriented closed curves on $X$ is \emph{essential} if it is not a trivial homotopy class nor a class homotopic to a cusp.  
Every such homotopy class of oriented closed curves on $X$ contains a unique hyperbolic geodesic $\g$. 
We define the \emph{iteration number} $m(\g) \in \m Z_{>0}$ if $\g$ is $m(\g)$-time iteration of a primitive closed geodesic.
\begin{lem}[See Lemma~\ref{lem:blm-hyp}]\label{lem:intro_hyp}
If $(X,g)$ admits a complete hyperbolic metric that is conformal to $g$,
  the total mass of the Brownian loop measure in an essential homotopy class of closed curves on $(X,g)$ equals
  $$\frac{1}{m (\g)} \frac{1}{\ee^{\ell_\g (X)} - 1},$$
  where $\g$ is the unique hyperbolic geodesic in the homotopy class and $\ell_\g (X)$ is the hyperbolic length of $\g$. 
\end{lem}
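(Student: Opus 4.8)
The plan is to use the conformal invariance of the Brownian loop measure (Theorem~\ref{thm:CI}) to transport the computation to a model case where the loop measure can be computed explicitly. Since $\mu_X$ depends only on the conformal class of $g$, and since $X$ carries a complete hyperbolic metric, I may pass to the hyperbolic uniformization $X = \HH / \G$ with $\G < \PSL(2,\R)$ a Fuchsian group (possibly with parabolics, possibly infinitely generated). A free homotopy class of closed curves corresponds to a conjugacy class $[h]$ in $\G$, and the essential hypothesis means $h$ is hyperbolic rather than parabolic or identity; the iteration number $m(\g)$ is the largest $k$ with $h = h_0^k$ for some $h_0 \in \G$, and the translation length of $h$ equals $\ell_\g(X)$.

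Next I would set up the standard description of loops in the class $[h]$ as a quotient of loops upstairs: a Brownian loop on $X$ in the class $[h]$ lifts to a Brownian path in $\HH$ going from a point $z$ to $h(z)$, and the loop measure on $X$ in this class pushes forward from the corresponding bridge-type measure on $\HH$, divided by the centralizer $\langle h_0 \rangle \cong \Z$ of $h$ in $\G$ to account for the choice of lift (this is where the factor $1/m(\g)$ will enter, since $\langle h \rangle$ has index $m(\g)$ in $\langle h_0 \rangle$). Concretely, using the loop-measure formula $\mu_X = \int_X \int_0^\infty \frac{1}{t}\, p_t^X(x,x)\, \mathfrak{m}_{x,x}^{t}\, \dd t\, \dvol(x) / (\text{time-rescaling})$, the mass of the class $[h]$ becomes an integral over $\HH$ of the heat kernel $p_t^{\HH}(z, h z)$ against $\dd t / t$, with the $X$-integral replaced by an integral over a fundamental domain for $\langle h_0\rangle$ acting on $\HH$, then folded to a single copy of the quotient cylinder. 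The upshot should be
\[
\mu_X([h]) = \frac{1}{m(\g)} \int_0^\infty \frac{\dd t}{t} \int_{\HH/\langle h\rangle} p_t^{\HH}(z, h z)\, \dvol_{\HH}(z),
\]
and by conformal invariance of $\mu_X$ this quantity equals the analogous quantity for the hyperbolic cylinder $\HH / \langle h \rangle$ alone, i.e.\ I have reduced to the case of the annulus.

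The computational heart is then the case $X = \HH / \langle z \mapsto e^{\ell} z\rangle$, a hyperbolic cylinder with core geodesic of length $\ell = \ell_\g(X)$ and iteration number $1$, where I must show the total mass of loops winding once (positively, say) is $1/(e^\ell - 1)$. I would do this by uniformizing the cylinder to a round annulus $A_r = \{r < |w| < 1\}$ with modulus related to $\ell$ by $\ell = 2\pi^2 / \log(1/r)$ (equivalently $\log(1/r) = 2\pi^2/\ell$), and invoking the known formula for the Brownian loop measure of the once-winding class on a round annulus — this is a classical computation (going back to the original Lawler--Werner work and reproved in several of the cited references), giving something like $\sum_{k\ge 1}$ of a theta-type series, or more cleanly $-\log\bigl(\prod_{n\ge 1}(1 - r^{2n})\bigr)$-type expressions for the full winding decomposition; matching the once-winding term against the substitution $r^2 = e^{-4\pi^2/\ell}$ and simplifying the resulting series should collapse to $1/(e^\ell-1)$. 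Alternatively, and perhaps more robustly, I would compute directly with the hyperbolic heat kernel: plug the explicit formula for $p_t^{\HH}$ (the McKean formula) into the integral above over the cylinder $\HH/\langle h\rangle$, carry out the $\dvol_\HH$ integral over a fundamental strip (which by Fubini and a change of variables reduces to a Gaussian-type integral in the transverse direction leaving the longitudinal coordinate free, producing the strip's width), and then evaluate the remaining $t$-integral, which should be a standard Hankel/Bessel-type integral yielding $e^{-\ell} + e^{-2\ell} + \cdots = 1/(e^\ell - 1)$ after recognizing the geometric series coming from the distance function $d_\HH(z, h z)$ on the cylinder.

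The main obstacle I anticipate is the bookkeeping in the second paragraph: making precise the disintegration of $\mu_X$ over free homotopy classes and justifying that only the cyclic subgroup $\langle h_0\rangle$ (and not the full group $\G$) contributes to a single class, so that the restriction/localization to the cylinder $\HH/\langle h\rangle$ is legitimate and introduces exactly the factor $1/m(\g)$ and no other corrections. One must check that the contribution of a loop winding in class $[h]$ genuinely ``sees'' only the cylinder quotient and not the rest of the surface — this is essentially the statement that a Brownian path from $z$ to $h z$ in $\HH$ projects to a loop in the correct homotopy class on the cylinder regardless of $\G$, which is true but needs a clean argument — together with ensuring the normalization constants (the $1/t$ weight and any time-rescaling convention in the definition of $\mu_X$ recalled in Section~\ref{sec:BLM}) are tracked consistently through the conformal change. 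By contrast, the final explicit annulus/heat-kernel computation, while it involves some special functions, is a routine if somewhat lengthy calculation that appears in the literature.
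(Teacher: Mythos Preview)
Your proposal is correct and follows essentially the same route as the paper: the Selberg-style unfolding of the conjugacy-class sum to an integral over a fundamental domain for the centralizer $\langle h_0\rangle$ (equivalently, your $\frac{1}{m(\g)}\int_{\HH/\langle h\rangle}$), followed by your option (b), the direct evaluation using the explicit hyperbolic heat kernel over the strip and then the $t$-integral. One small clarification: the reduction to the cylinder is purely the unfolding identity, not an application of conformal invariance or restriction (the cylinder is a cover of $X$, not a subdomain), and your option (a) would be circular within the paper's logic since the annulus formula (Corollary~\ref{cor:annulus}) is derived from this lemma, though it can be made independent by citing Lawler's direct computation.
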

It follows immediately that the total mass of Brownian loops in a homotopy class of a simple closed geodesic on $X$ is invariant if we change the conformal structure of $X$ by twisting along the same geodesic. Our proof follows from a standard computation using the heat kernel on the hyperbolic plane $\m H$. 
The same result for closed surfaces (i.e., compact and without boundary) was obtained in \cite{LeJan_BloopTop} (see also \cite[Sec.\,7]{lejan2017markov_topo}) using \cite{McKean72}.

When $X$ is one of $\Chat$, $\m C$, $\m C \setminus \{0\}$, all homotopy classes are not essential. The only remaining case is when $X$ is a torus.
\begin{lem}[See Lemma~\ref{lem:blm-flat}]\label{lem:intro_flat}
    If $(X,g)$ is conformally equivalent to the flat torus $T=\C/\Lambda$ (endowed with the Euclidean metric) where $\Lambda\sbs\C$ is a discrete lattice. 
    Let $\g$ be an oriented closed geodesic on $T$ with length $\ell_\g (T)$, the total mass of Brownian loops homotopic to $\g$ equals ${\rm Area}(T)/(\pi \ell_\g (T)^2)$.
\end{lem}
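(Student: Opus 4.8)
The plan is to mimic the proof of Lemma~\ref{lem:intro_hyp}, replacing the hyperbolic plane $\H$ by its flat analogue $\C$ (the universal cover of $T$) and the hyperbolic heat kernel by the Gaussian heat kernel on $\C$. Recall that the Brownian loop measure on $T$ is obtained from that on the universal cover $\C$ by projection, and that loops on $T$ in a fixed free homotopy class correspond to Brownian paths on $\C$ that, after time $t$, return to the orbit $z+\Lambda$ of their starting point $z$ with a prescribed deck transformation. Since $\Lambda$ is abelian, a free homotopy class of closed curves is labelled by an element $w\in\Lambda\setminus\{0\}$; the closed geodesic $\g$ in that class is the straight segment realizing $w$, so $\ell_\g(T)=|w|$. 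Writing the standard formula for the Brownian loop measure as $\int_0^\infty \frac{1}{t}\,p_t(z,z')\,dt$ integrated against Lebesgue measure on the fundamental domain and summed over the appropriate deck transformation, the mass of the class of $w$ becomes
\begin{equation*}
\mu_T(\text{class of }w)=\area(T)\int_0^\infty \frac{1}{t}\, p_t(0,w)\,dt,
\end{equation*}
where $p_t(0,w)=\frac{1}{4\pi t}\,\ee^{-|w|^2/4t}$ is the heat kernel on $\C$ and the factor $\area(T)$ comes from integrating the starting point over a fundamental domain (the integrand depending only on $w$).

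Next I would carry out the elementary integral. Substituting the Gaussian kernel gives
\begin{equation*}
\mu_T(\text{class of }w)=\area(T)\int_0^\infty \frac{1}{4\pi t^2}\,\ee^{-|w|^2/4t}\,dt,
\end{equation*}
and the change of variables $s=|w|^2/4t$ turns this into $\frac{\area(T)}{\pi |w|^2}\int_0^\infty \ee^{-s}\,ds=\frac{\area(T)}{\pi |w|^2}$. Since $|w|=\ell_\g(T)$, this is exactly $\area(T)/(\pi\,\ell_\g(T)^2)$, as claimed. Note that because $\Lambda$ is torsion-free, no geodesic is a nontrivial iterate of a shorter one in a way that changes the combinatorics (the primitive/iterate distinction is already encoded in which $w$ one picks), so no $1/m(\g)$ factor appears; this is the structural reason the flat answer looks different from the hyperbolic one, and it is worth a sentence of comment.

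I expect two points to require the most care, neither of them deep. First, setting up the projection from $\mu_\C$ to $\mu_T$ and justifying that the mass in the free homotopy class of $w$ equals $\area(T)\int_0^\infty t^{-1}p_t(0,w)\,dt$ — one must check the root-point normalization in the definition of the Brownian loop measure (integration of the unrooted loop measure, with its $1/t$ weighting, against the occupation measure) and that summing over deck transformations correctly decomposes into homotopy classes; this is the same bookkeeping as in the hyperbolic case and can be cited from there. Second, conformal invariance (Theorem~\ref{thm:CI}) is what lets us pass from a general metric $g$ in the conformal class to the flat metric on $T=\C/\Lambda$, and from the flat metric on one fundamental domain to any other lattice representing the same Riemann surface; I would invoke it explicitly at the start so that the computation may be done once and for all with the Euclidean metric. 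Everything else is the one-line Gamma-integral above.
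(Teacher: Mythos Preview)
Your proposal is correct and follows essentially the same route as the paper: express the mass in the homotopy class of $w\in\Lambda$ as $\int_0^\infty t^{-1}\int_F p_\C(z,z+w;t)\,\dd\rho(z)\,\dd t$, note the integrand is independent of $z$ so the inner integral contributes $\area(T)$, and evaluate the remaining one-variable integral $\area(T)\int_0^\infty (4\pi t^2)^{-1}\ee^{-|w|^2/4t}\,\dd t=\area(T)/(\pi|w|^2)$. The paper's version is terser (it omits your commentary on conformal invariance and on why no $1/m(\g)$ appears, the latter being simply that $\Lambda$ is abelian so the conjugacy class of $w$ is a singleton), but the computation is the same.
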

In this case, the flat metrics on $T$ are not unique, but are related by a scaling which leaves ${\rm Area}(T)/(\pi \ell_\g (T)^2)$ invariant.

    We note that although the Brownian loop measure is conformally invariant and a priori does not prefer any particular metric in the same conformal class, Lemmas~\ref{lem:intro_hyp} and \ref{lem:intro_flat} show that it does single out the length of the geodesic for the complete hyperbolic metric (or the flat metrics if $X$ is a torus).

\subsection{Identity on surface length spectra}
Lemma \ref{lem:intro_hyp} and \ref{lem:intro_flat} give the connection between the Brownian loop measure and the geodesic length on a Riemann surface. Then, it gives a new tool to study the length spectrum via the Brownian loop measure. As an application, we show a new type of identity between the length spectra of different surfaces. 

Let $X$ be a complete hyperbolic surface without boundary (perhaps with cusps or funnels). Let $P = \{p_1,\cdots,p_n,\cdots\}\sbs X$ be a closed subset consisting of finitely or countably many distinct points and $X'=X\setminus\{p_1,\cdots,p_n,\cdots\}$ be the Riemann surface with the complex structure induced from $X$. Then $X'$ admits a unique complete hyperbolic metric compatible with the complex structure. On $X'$, the points $p_n$ are cusps. We obtain the following identity between the length spectra of $X$ and $X'$.
\begin{thm}[See Theorem~\ref{thm-id-hyp}]\label{thm:intro_id-hyp}
    Let $\gamma$ be a closed geodesic on $X$. Then 
    \begin{equation*}
        \frac{1}{m(\gamma)} \frac{1}{\ee^{\ell_\gamma(X)}-1} = \sum_{\gamma' \simeq_X \gamma} \frac{1}{m(\gamma')} \frac{1}{\ee^{\ell_{\gamma'}(X')}-1}
    \end{equation*}
    where the summation is taken over all closed geodesics $\gamma'$ on $X'$ that are homotopic to $\gamma$ as curves on $X$ (that we denote by $\g'\simeq_X \g$), $m(\gamma)$ is the iteration number on $X$, and $m(\gamma')$ is the iteration number on $X'$. 
    In particular, 
    \begin{equation*}
        \frac{1}{\ee^{\ell_\gamma(X)}-1} = \sum_{\gamma' \simeq_X \gamma} \frac{1}{\ee^{\ell_{\gamma'}(X')}-1}
    \end{equation*}
    if $\gamma$ is primitive on $X$.
\end{thm}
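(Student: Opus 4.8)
The plan is to obtain the identity by computing the total mass of Brownian loops in the class of $\gamma$ in two ways: on $X$ directly, and on $X'=X\setminus P$ after decomposing that class according to the finer homotopy data available once the points of $P$ are removed. Fix the closed geodesic $\gamma$ on $X$ and let $[\gamma]_X$ denote its free homotopy class of oriented closed curves on $X$. Since a closed geodesic lies in an essential class, Lemma~\ref{lem:intro_hyp} applies and gives
\[
\mu_X\big([\gamma]_X\big)=\frac{1}{m(\gamma)}\,\frac{1}{\ee^{\ell_\gamma(X)}-1}.
\]
The whole content of the theorem is to re-express this same left-hand side in terms of data on $X'$.

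For that, I would first note that deleting $P$ does not affect the loop measure itself, only the homotopy bookkeeping. Since $P$ is countable, hence polar for planar Brownian motion, $\mu_X$ assigns zero mass to the set of loops that meet $P$, so $\mu_X\big([\gamma]_X\big)=\mu_X\big(\{\ell\in[\gamma]_X:\ell\cap P=\varnothing\}\big)$. Each loop $\ell$ in this set, viewed as a curve on the open surface $X'$, has a well-defined free homotopy class, and I claim it is essential on $X'$: it is noncontractible on $X'$ because it is already noncontractible on $X$ (its $X$-class being essential), and it is not peripheral on $X'$ because a loop encircling one of the added cusps $p_i$ is contractible in $X$, while a loop encircling an original cusp of $X$ cannot be freely homotopic in $X$ to a closed geodesic. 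Hence that class is $[\gamma']_{X'}$ for a unique closed geodesic $\gamma'$ of $X'$, and $\gamma'\simeq_X\gamma$ since an $X'$-homotopy is in particular an $X$-homotopy. Conversely every $[\gamma']_{X'}$ with $\gamma'\simeq_X\gamma$ consists of loops lying in $X'$ and belonging to $[\gamma]_X$, and distinct $\gamma'$ give disjoint classes, so we obtain a partition into countably many pieces,
\[
\{\ell\in[\gamma]_X:\ell\cap P=\varnothing\}=\bigsqcup_{\gamma'\simeq_X\gamma}[\gamma']_{X'}.
\]

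Summing $\mu_X$ over this partition and invoking the restriction property (Theorem~\ref{thm:CI}) — which, combined with conformal invariance, identifies $\mu_{X'}$ built from the complete hyperbolic metric of $X'$ with the restriction of $\mu_X$ to loops contained in $X'$ — gives
\[
\frac{1}{m(\gamma)}\,\frac{1}{\ee^{\ell_\gamma(X)}-1}=\mu_X\big([\gamma]_X\big)=\sum_{\gamma'\simeq_X\gamma}\mu_X\big([\gamma']_{X'}\big)=\sum_{\gamma'\simeq_X\gamma}\mu_{X'}\big([\gamma']_{X'}\big),
\]
and applying Lemma~\ref{lem:intro_hyp} on $X'$ to each essential class $[\gamma']_{X'}$ rewrites the last sum as $\sum_{\gamma'\simeq_X\gamma}\tfrac{1}{m(\gamma')}\tfrac{1}{\ee^{\ell_{\gamma'}(X')}-1}$; its convergence is automatic since the value is the finite number $\mu_X([\gamma]_X)$. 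For the final assertion, if $\gamma$ is primitive on $X$ then $m(\gamma)=1$, and moreover $m(\gamma')=1$ for every $\gamma'\simeq_X\gamma$: writing $\gamma'=\eta^{m(\gamma')}$ with $\eta$ primitive on $X'$, the class $[\gamma]_X=[\gamma']_X$ is the $m(\gamma')$-th power of $[\eta]_X$, and $[\eta]_X$ is neither trivial nor peripheral (else $[\gamma]_X$ would be), hence essential, so $m(\gamma)$ is a multiple of $m(\gamma')$, forcing $m(\gamma')=1$.

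I expect the only delicate points to be two soft verifications rather than any estimate: that $\mu_X$ does not charge loops through the countable set $P$ and that the restriction property applies verbatim to the open subsurface $X'=X\setminus P$, and that every $X'$-homotopy class occurring in the decomposition is essential so that Lemma~\ref{lem:intro_hyp} is available on $X'$. Once the polarity of points and the description of peripheral classes are in hand, the argument is purely a matter of organizing free homotopy classes, and that bookkeeping is the step I would state most carefully.
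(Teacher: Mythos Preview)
Your proposal is correct and follows essentially the same approach as the paper: compute $\mu_X([\gamma]_X)$ via Lemma~\ref{lem:intro_hyp}, identify $\mu_X$ with $\mu_{X'}$ using polarity of $P$ together with restriction and conformal invariance, decompose the class by $X'$-homotopy, and apply Lemma~\ref{lem:intro_hyp} again on $X'$. The paper isolates the equality $\mu_X=\mu_{X'}$ as a separate corollary, proving it by removing $\varepsilon$-balls around the punctures and letting $\varepsilon\to0$ (precisely because the conformal factor between the two hyperbolic metrics is singular at $P$, so Theorem~\ref{thm:CI} does not apply directly)---this is exactly the ``delicate point'' you flag, and your careful check that each $X'$-class arising is essential is actually more explicit than the paper's treatment.
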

We note that this identity holds regardless of the punctures' number or location, and there are infinitely many (but countable) terms on the right-hand side. A similar result holds when $X$ is a torus. See Theorem~\ref{thm-id-flat}.
 
\begin{remark}
The proof of Theorem~\ref{thm:intro_id-hyp} uses Lemma~\ref{lem:intro_hyp} and the fact that the Brownian loop measure is conformally invariant and that a countable family of points is \emph{polar} (i.e., almost surely never hit by a Brownian motion). In fact, Theorem~\ref{thm:intro_id-hyp} also holds when we replace $P$ by any closed, polar, subset of $X$.
\end{remark}

The identity Theorem~\ref{thm:intro_id-hyp} looks quite similar to the McShane identity \cite{McS98} for once-punctured
torus, which says that on a once-punctured
torus $X$ endowed with the complete hyperbolic metric, 
\begin{equation*}
    \sum_{\gamma} \frac{1}{\ee^{\ell_\gamma(X)}+1} = 1 
\end{equation*}
where the summation is taken over all oriented \emph{simple} closed geodesics. It would be interesting to consider whether Theorem~\ref{thm:intro_id-hyp} and the McShane identity have any relationship. 

To our best knowledge, not so many types of identity on the geodesic lengths on hyperbolic surfaces are known. McShane identity \cite{McS98} and Mirzakhani's generalization \cite{Mirz07-inv} are established by decomposing horocycle (or boundary) according to the homotopy class of geodesic flow starting from the boundary and ending when intersecting the boundary or self-intersecting. Basmajian identity \cite{Basm93} is established by decomposing the boundary according to the homotopy class of geodesic flow with both two endpoints on the boundary. See also \cite{parlier2020geodesic} for a unification and generalization of McShane's identity and Basmajian identity. Bridgeman identity \cite{Bridg11, BK10, Cale11, Cale10} is established by decomposing the unit tangent bundle according to the homotopy class of geodesic flow with both two endpoints on the boundary. Luo--Tan identity \cite{LT14} is established by decomposing the unit tangent bundle according to the homotopy class of geodesic flow ending when self-intersect in both two directions. See \cite{BT16} for a survey and the references therein about these four types of identity and their generalizations.

Our identity Theorem \ref{thm:intro_id-hyp} is of a different type. It gives the connection of the length spectrum of two surfaces $X$ and $X'$, while the above four types only consider one surface. 

\subsection{Explicit computations of Brownian loop measure}

We note that Lemma~\ref{lem:intro_hyp} covers the case where $(X,g)$ has a non-empty boundary. After removing the boundary, $X$ has a unique complete hyperbolic metric conformal to $g$. One may think of the boundary as being ``infinitely far''. 
This allows us to compute the Brownian loop measure using the geodesic length in many setups. 
We select a few simple examples here. 

\begin{cor}[See Corollary~\ref{cor:annulus}]
   Let $r > 0$. The total mass of Brownian loops in $\m C$ that stays in the annulus $\m A_r : = \{z \in \m C \,|\, \ee^{-r}<|z|<1\} \subset \m C$ and has winding number $m \in \m Z \setminus \{0\}$ equals
    $$\frac{1}{|m|} \frac{1}{\ee^{\ell_{\m A_r}(\g^m)}  - 1} = \frac{1}{|m|} \frac{1}{\ee^{2\pi^2 |m|/r} - 1}.$$
    Here, $\g^m$ is the curve obtained by iterating $m$ times the circle $e^{-r/2} \m S^1$ counterclockwise, which is the unique simple closed geodesic on $\m A_r$ endowed with the complete hyperbolic metric and has length $2\pi^2/r$, in the counterclockwise direction. See Figure~\ref{fig:annulus}.
\end{cor}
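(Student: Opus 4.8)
The plan is to deduce this corollary directly from Lemma~\ref{lem:intro_hyp} (i.e.\ Lemma~\ref{lem:blm-hyp}), once we identify the hyperbolic geometry of the annulus $\mathbb A_r$ and match up the homotopy data. First I would note that by the conformal invariance of the Brownian loop measure and the restriction property (Theorem~\ref{thm:CI}), the mass of Brownian loops in $\mathbb C$ that stay inside $\mathbb A_r$ is exactly the mass $\mu_{\mathbb A_r}$ of the Brownian loop measure intrinsic to the Riemann surface $\mathbb A_r$. So the statement reduces to computing $\mu_{\mathbb A_r}$ in each nontrivial homotopy class, and the homotopy classes of oriented loops in an annulus are indexed precisely by the winding number $m \in \mathbb Z$, with $m = 0$ being the trivial class (excluded here).

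Next I would work out the hyperbolic metric on $\mathbb A_r$. The map $z \mapsto \log z$ (a suitable branch) conformally identifies $\mathbb A_r = \{\ee^{-r} < |z| < 1\}$ with the straight cylinder $\{ -r < \Re w < 0\}/(w \sim w + 2\pi i)$, which is a quotient of the strip. The strip $\{-r < \Re w < 0\}$ is conformally a disk, and its complete hyperbolic metric is the standard one; pushing it to the cylinder, the core geodesic is the circle $\{\Re w = -r/2\}$, i.e.\ the image of $\ee^{-r/2}\mathbb S^1$, and a standard computation (the strip of width $r$ has hyperbolic metric $\frac{\pi}{r \sin(\pi \Re w / (-r))}\,|dw|$ or similar, so the length of the core circle of circumference $2\pi$ is $2\pi \cdot \frac{\pi}{r}$) gives that the unique simple closed geodesic has hyperbolic length $2\pi^2/r$. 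This is classical and I would just cite it or include the one-line computation. The geodesic in the homotopy class of winding number $m$ is then the $|m|$-fold iterate of this core geodesic (traversed counterclockwise if $m>0$, clockwise if $m<0$), so its iteration number is $m(\gamma^m) = |m|$ and its length is $\ell_{\mathbb A_r}(\gamma^m) = 2\pi^2 |m|/r$.

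Finally I would plug these into Lemma~\ref{lem:intro_hyp}: the class is essential (it is neither trivial nor homotopic to a cusp — $\mathbb A_r$ has two funnel ends, not cusps), so the total mass equals $\frac{1}{m(\gamma^m)} \cdot \frac{1}{\ee^{\ell_{\mathbb A_r}(\gamma^m)} - 1} = \frac{1}{|m|} \cdot \frac{1}{\ee^{2\pi^2 |m|/r} - 1}$, which is exactly the claimed formula.

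I do not expect a serious obstacle here: the only thing requiring care is getting the hyperbolic length of the core geodesic of the flat cylinder of modulus $r/(2\pi)$ right (i.e.\ the factor $2\pi^2/r$), and making sure the correspondence between oriented homotopy classes in the annulus and signed winding numbers — together with the iteration number $|m|$ — is stated cleanly. Everything else is a direct substitution into Lemma~\ref{lem:intro_hyp} combined with the already-established conformal invariance and restriction property.
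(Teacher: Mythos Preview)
Your proposal is correct and follows essentially the same approach as the paper: identify the complete hyperbolic metric on $\mathbb A_r$, compute the length $2\pi^2/r$ of the core geodesic, and apply Lemma~\ref{lem:blm-hyp}. The only cosmetic difference is that the paper realizes $\mathbb A_r$ as the quotient $\langle z\mapsto \ee^{2\pi^2/r}z\rangle\backslash\mathbb H$ (from which the geodesic length is read off as the translation length), whereas you use the logarithm to a strip; both are standard and equivalent.
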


Lawler also computed this quantity in the proof of \cite[Prop.\,3.9]{lawler2011defining} using the Brownian bubble measure. 
See Remark~\ref{rem:lawler}. 
The total mass of Brownian loops of odd winding number was also used in \cite{lawler_one_edge}.

Another immediate consequence is the following relation between the electrical thickness \cite{kenyonconformal} and Brownian loop measure. Let $K \subset \m C \setminus \{0\}$ be a connected compact set separating $0$ and $\infty$. Let $\O$ and $\O^*$ be respectively the connected component of $\m C \setminus K$ containing $0$ and $\infty$. 
 Let $f$ be a conformal map $\m D \to \O$ fixing $0$ and $h$ a conformal map $\m D^* \to \O^*$ fixing $\infty$, where $\m D = \{z \in \Chat \,|\, |z| < 1\}$ and $\m D^* = \{z \in \Chat \,|\, |z| > 1\}$. The \emph{electrical thickness} of $K$ is defined as $$\t(K) := \log |h'(\infty)| - 
 \log |f'(0)|.$$
 This function is nonnegative, thanks to the Grunsky inequality. See, e.g., \cite[Thm.\,4.1, (21)]{Pom_uni}. Restricting $\t (\cdot)$ to the universal Teichm\"uller curve $\mc T(1)$ (which is a natural fiber space over the universal Teichm\"uller space $T(1)$), $\t (\cdot)$ was shown \cite[Ch.\,I, Thm.\,5.3]{TT06} to be a K\"ahler potential for the unique homogeneous K\"ahler metric (Velling--Kirillov metric) on $\mc T (1)$.

\begin{cor}[See Corollary~\ref{cor:thick}]
    Let $K \subset \m C \setminus \{0\}$ be a connected compact set separating $0$ and $\infty$. For all $m \in \m Z\setminus \{0\}$, we have
    $$
        \mu_{\Chat} (\{\text{loops with winding $m$ around } 0 \text{ intersecting } K\}) = \frac{\t(K)}{2 \pi^2 m^2} + \frac{1}{2|m|}. 
   $$
\end{cor}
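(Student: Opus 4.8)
We prove Corollary~\ref{cor:thick}. The plan is to combine the restriction property (Theorem~\ref{thm:CI}), Lemma~\ref{lem:intro_hyp} in the form of Corollary~\ref{cor:annulus}, and the classical asymptotics of the conformal modulus of a domain from which a small disk has been removed, after a regularization that cuts out neighbourhoods of $0$ and $\infty$. I first record two reductions. Since singletons are polar, $\mu_{\Chat}$-almost every loop avoids $\{0,\infty\}$; by the restriction property we may thus work with loops in $\m C\setminus\{0\}$, and I write $W_m$ for the set of such loops with winding number $m$ about $0$. Since $K$ is connected and compact in the sphere, each connected component of $\Chat\setminus K$ is simply connected; as only $\O$ and $\O^*$ among them meet $\{0,\infty\}$, a loop contained in any other component is contractible in $\m C\setminus\{0\}$ and hence lies in $W_0$. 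A loop, being connected and disjoint from $K$, lies in a single component of $\Chat\setminus K$; so for $m\neq 0$, every loop of $W_m$ avoiding $K$ is contained in $\O$ or in $\O^*$.

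Next I regularize. Fix $\delta>0$ small and $R<\infty$ large so that $\{|z|\le\delta\}\subset\O$ and $\{|z|\ge R\}\cup\{\infty\}\subset\O^*$ (the latter is possible since $\Chat\setminus\O^*$ is a compact subset of $\m C$). Then $K\subset\mathcal A:=\{\delta<|z|<R\}$, and $\mathcal A\cap\O=\O\setminus\{|z|\le\delta\}$, $\mathcal A\cap\O^*=\O^*\setminus\big(\{|z|\ge R\}\cup\{\infty\}\big)$ are finite-modulus annuli on which $W_m$ is the essential homotopy class of the $|m|$-fold iterate of the core geodesic. I split the loops of $W_m$ meeting $K$ according to whether they stay in $\mathcal A$. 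A loop meeting $K\subset\mathcal A$ but leaving $\mathcal A$ must also meet $\{|z|\le\delta\}$ or $\{|z|\ge R\}\cup\{\infty\}$, so such loops have mass at most $\mu_{\Chat}(\{\text{loops meeting }K\text{ and }\{|z|\le\delta\}\})+\mu_{\Chat}(\{\text{loops meeting }K\text{ and }\{|z|\ge R\}\cup\{\infty\}\})$; since the mass of loops meeting two disjoint compact sets is finite, monotone convergence shows this tends, as $\delta\to 0$ and $R\to\infty$, to the mass of loops through $0$ or through $\infty$, which is zero by polarity. For the loops that stay in $\mathcal A$, their mass is $\mu_{\mathcal A}(W_m)$ minus the mass of loops of $W_m$ staying in $\mathcal A$ and avoiding $K$; by the dichotomy above and the restriction property the latter equals $\mu_{\mathcal A\cap\O}(W_m)+\mu_{\mathcal A\cap\O^*}(W_m)$ (these two subdomains being disjoint). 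Hence the mass in question is
\[
\mu_{\mathcal A}(W_m)-\mu_{\mathcal A\cap\O}(W_m)-\mu_{\mathcal A\cap\O^*}(W_m).
\]

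Now I evaluate. By Corollary~\ref{cor:annulus}, for the round annulus $\mathcal A$ (modulus $\tfrac1{2\pi}\log(R/\delta)$, core geodesic of length $\pi/(\text{modulus})$) one has $\mu_{\mathcal A}(W_m)=\tfrac1{|m|}\big(\ee^{2\pi^2|m|/\log(R/\delta)}-1\big)^{-1}$. For $\mathcal A\cap\O=\O\setminus\{|z|\le\delta\}$ the reduced-modulus asymptotics give modulus $\tfrac1{2\pi}\big(\log(1/\delta)+\log|f'(0)|+o(1)\big)$ as $\delta\to 0$ (with $|f'(0)|$ the conformal radius of $\O$ at $0$), so Lemma~\ref{lem:intro_hyp} yields $\mu_{\mathcal A\cap\O}(W_m)=\tfrac1{|m|}\big(\ee^{|m|\pi/(\text{modulus})}-1\big)^{-1}$; for $\mathcal A\cap\O^*$, the map $w=1/z$ identifies it with $\widetilde{\O^*}\setminus\{|w|\le 1/R\}$, where $\widetilde{\O^*}=\{1/z:z\in\O^*\}$ has conformal radius $1/|h'(\infty)|$ at $0$ (uniformized by $w\mapsto 1/h(1/w)$), so its modulus is $\tfrac1{2\pi}\big(\log R-\log|h'(\infty)|+o(1)\big)$, the inversion merely flipping the sign of the winding number. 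Inserting $(\ee^x-1)^{-1}=x^{-1}-\tfrac12+O(x)$ as $x\to 0$ into the three masses and subtracting, the divergent terms combine to $\tfrac1{2\pi^2 m^2}\big(\log(R/\delta)-\log(1/\delta)-\log|f'(0)|-\log R+\log|h'(\infty)|\big)=\tfrac{\log|h'(\infty)|-\log|f'(0)|}{2\pi^2 m^2}=\tfrac{\t(K)}{2\pi^2 m^2}$, while the constants give $-\tfrac1{2|m|}+\tfrac1{2|m|}+\tfrac1{2|m|}=\tfrac1{2|m|}$ and the remaining error is $o(1)$. Letting $\delta\to 0$ and $R\to\infty$ yields the claim.

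The main obstacle, past the bookkeeping, is pinning down the \emph{constant} term $\tfrac1{2|m|}$: this forces the use of the exact reduced-modulus asymptotics, namely that $\operatorname{mod}(\O\setminus\{|z|\le\delta\})-\tfrac1{2\pi}\log(1/\delta)$ converges (to $\tfrac1{2\pi}\log|f'(0)|$), rather than a crude bound, together with its counterpart at $\infty$. The other point requiring care is the topology --- that for $m\neq 0$ a loop of $W_m$ avoiding $K$ is confined to $\O$ or $\O^*$ --- which rests on the simple connectivity of the complementary components of the connected compact set $K$. Finiteness of the mass of loops meeting two disjoint compact sets and polarity of singletons are standard, and are exactly what make the $\delta\to 0$, $R\to\infty$ limit legitimate.
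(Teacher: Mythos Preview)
Your proof is correct and follows essentially the same approach as the paper's: regularize by excising small disks around $0$ and $\infty$, decompose the mass as that on a large round annulus minus the masses on the two annular pieces $\mathcal A\cap\O$ and $\mathcal A\cap\O^*$, and pass to the limit via the conformal-radius (reduced-modulus) asymptotics together with the expansion $(\ee^x-1)^{-1}=x^{-1}-\tfrac12+O(x)$. The only organizational difference is that the paper bundles the round annulus and the $\O$-piece into the preceding Corollary~\ref{cor:touching_annulus} before subtracting the $\O^*$-piece, whereas you treat the three annuli symmetrically; the underlying computation is identical.
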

 We mention that the K\"ahler potential (i.e., the Loewner energy \cite{W2} / universal Liouville action \cite{TT06})  on the connected component $T_0(1)$ of $T(1)$ for the unique homogeneous K\"ahler metric (Weil--Petersson metric) also has an expression in terms of the Brownian loop measure \cite{W3}. It would be interesting to investigate the relation between these two K\"ahler potentials in terms of the Brownian loop measure.

There are many other applications of Lemmas~\ref{lem:intro_hyp} and \ref{lem:intro_flat} to compute Brownian loop measure using the geodesic length. We give another two examples.
\begin{cor}
    Let $T$ be a flat torus and $\g$ a simple closed geodesic (hence primitive). Noticing that $T \setminus \g$ is conformal to $\m A_r$ with $r = 2\pi \operatorname{Area} (T)/ \ell_\g(T)^2$, we have
$$\mu_T \{\text {loops homotopic to } \g^m \text { and hit } \g\} = \frac{\operatorname{Area} (T)}{\pi \ell_\g(T)^2 m^2} - \frac{1}{|m|}\frac{1}{\exp(\frac{\pi  \ell_\g(T)^2 |m|}{\operatorname{Area} (T)})-1},$$
which only depends on $m \in \m Z \setminus \{0\}$, the area of $T$ and the length of $\g$.
\end{cor}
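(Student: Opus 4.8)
Set $A=\operatorname{Area}(T)$ and $L=\ell_{\gamma}(T)$, and fix $m\in\mathbb{Z}\setminus\{0\}$. The plan follows the same recipe as the preceding flat-torus computations: compute separately the total Brownian loop mass in the free homotopy class of $\gamma^{m}$ on $T$ and the mass of the sub-family of such loops that never meet $\gamma$, and then form the right-hand side as the difference of the two.

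For the total, because $\gamma$ is a simple (hence primitive) closed geodesic, its $m$-th iterate $\gamma^{m}$ is again a closed geodesic, of length $|m|L$; applying Lemma~\ref{lem:blm-flat} to $\gamma^{m}$ gives
\[
\mu_{T}\bigl(\{\text{loops freely homotopic to }\gamma^{m}\}\bigr)=\frac{A}{\pi\,(|m|L)^{2}}=\frac{A}{\pi L^{2}m^{2}} .
\]

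For the loops that avoid $\gamma$, recall that a simple closed geodesic on a torus is non-separating, so $T\setminus\gamma$ is an open cylinder; cutting $T=\mathbb{C}/\Lambda$ along a straight representative of $\gamma$ realizes $T\setminus\gamma$ as the flat cylinder of circumference $L$ and height $A/L$, hence conformally as the round annulus $\mathbb{A}_{r}$ with $r=2\pi(A/L)/L=2\pi A/L^{2}$, the value quoted in the statement. A loop is disjoint from the closed curve $\gamma$ precisely when its (compact) image lies in the open set $T\setminus\gamma$, so by the restriction property the restriction of $\mu_{T}$ to such loops equals $\mu_{T\setminus\gamma}$, and by conformal invariance (Theorem~\ref{thm:CI}) the latter is the pushforward of $\mu_{\mathbb{A}_{r}}$. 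The topological input needed is that the inclusion $T\setminus\gamma\hookrightarrow T$ induces an injection $\pi_{1}(T\setminus\gamma)=\mathbb{Z}\hookrightarrow\pi_{1}(T)=\mathbb{Z}^{2}$ carrying a core circle (which is freely homotopic to $\gamma$ in $T$) to the class of $\gamma$; since $\pi_{1}(T)$ is abelian, it follows that a loop lying in $T\setminus\gamma$ is freely homotopic to $\gamma^{m}$ on $T$ if and only if it winds $m$ times around $0$ in $\mathbb{A}_{r}$, and that no other winding number contributes. Hence, by Corollary~\ref{cor:annulus} and the identity $2\pi^{2}|m|/r=\pi L^{2}|m|/A$,
\[
\mu_{T}\bigl(\{\text{loops freely homotopic to }\gamma^{m}\text{ and disjoint from }\gamma\}\bigr)=\frac{1}{|m|}\,\frac{1}{\ee^{\pi L^{2}|m|/A}-1}.
\]

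Subtracting the second displayed quantity from the first produces exactly the asserted right-hand side, whose dependence on only $m$, $A$ and $L$ is then manifest. The only step demanding genuine care is the homotopy-theoretic bookkeeping in the third paragraph: pinning down the conformal modulus $r=2\pi A/L^{2}$ and, above all, checking that among the loops avoiding $\gamma$ the two descriptions ``freely homotopic to $\gamma^{m}$ on $T$'' and ``winding number $m$ in $\mathbb{A}_{r}$'' single out the same family, so that Corollary~\ref{cor:annulus} applies without change; everything else is a formal combination of the results above.
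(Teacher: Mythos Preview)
Your method is exactly the one the paper has in mind: combine Lemma~\ref{lem:blm-flat} for the total mass in the class of $\gamma^{m}$ with the restriction property, conformal invariance, and Corollary~\ref{cor:annulus} applied to $T\setminus\gamma\cong\mathbb{A}_{r}$. All three ingredients---the total $A/(\pi L^{2}m^{2})$, the modulus $r=2\pi A/L^{2}$, and the annulus mass $\tfrac{1}{|m|}\bigl(\ee^{\pi L^{2}|m|/A}-1\bigr)^{-1}$---are computed correctly, and the homotopy bookkeeping (that winding number $m$ in the annulus is equivalent to being homotopic to $\gamma^{m}$ in $T$, via the injection $\pi_{1}(T\setminus\gamma)\hookrightarrow\pi_{1}(T)$) is sound.

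There is, however, a logical slip at the very end. The quantity you obtain in your third paragraph---loops homotopic to $\gamma^{m}$ and \emph{disjoint} from $\gamma$---is precisely the left-hand side of the identity as stated (``do not hit $\gamma$''). You then subtract this from the total and note that the \emph{difference} equals the displayed right-hand side; but that difference is the mass of loops homotopic to $\gamma^{m}$ that \emph{do} hit $\gamma$, not the stated left-hand side. What your argument actually establishes is
\[
\mu_{T}\{\text{loops homotopic to }\gamma^{m}\text{ that intersect }\gamma\}
=\frac{A}{\pi L^{2}m^{2}}-\frac{1}{|m|}\,\frac{1}{\ee^{\pi L^{2}|m|/A}-1}.
\]
This is almost certainly what the corollary intends (compare the neighboring corollaries, which are all phrased in terms of loops \emph{intersecting} a given set), so the printed statement appears to carry a typo. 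You should flag this explicitly rather than silently identify the difference with the stated left-hand side; as written, your final sentence asserts an equality that your own computations contradict.
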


Using the prime geodesic theorem \cite{Gui86, Lal89, Bor16} counting the number of primitive geodesics on a surface shorter than a fixed length, we also obtain the following:
\begin{cor}[See Corollary~\ref{cor:boundary_total}]
    Assume that $X$ is a Riemann surface of finite type (i.e., the Euler characteristic is finite) and that $X$ has infinite hyperbolic area. Then, the total mass of Brownian loops on $X$ that are in essential homotopy classes is finite.
\end{cor}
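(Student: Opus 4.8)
The plan is to convert the total mass into an explicit series over closed geodesics by means of Lemma~\ref{lem:blm-hyp}, and then to balance the exponential decay $1/(\ee^{\ell}-1)\sim\ee^{-\ell}$ of the individual terms against the growth of the length spectrum, which the prime geodesic theorem controls in a way that is sub-exponential in $\ee^{L}$ precisely because $X$ has infinite area. Concretely, since $X$ has finite type it is $\Gamma\backslash\H$ with $\Gamma$ finitely generated, hence geometrically finite; its length spectrum is discrete with finite multiplicities, so there are only countably many essential free homotopy classes and the systole $\ell_0:=\min_{\gamma}\ell_\gamma(X)$ is strictly positive. The loops lying in essential classes then form a countable disjoint union of measurable sets, so by Lemma~\ref{lem:blm-hyp} the quantity to be bounded is
\begin{equation*}
\sum_{\gamma}\frac{1}{m(\gamma)}\,\frac{1}{\ee^{\ell_\gamma(X)}-1}
=\sum_{\gamma_0}\ \sum_{m\ge 1}\frac{1}{m}\,\frac{1}{\ee^{m\ell_{\gamma_0}(X)}-1},
\end{equation*}
where $\gamma$ ranges over all (oriented) closed geodesics and, after regrouping, $\gamma_0$ ranges over the primitive ones, using $\ell_{\gamma_0^{m}}(X)=m\,\ell_{\gamma_0}(X)$ and $m(\gamma_0^{m})=m$.

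The next step is an elementary estimate of the inner sum. Using $\ee^{m\ell}-1\ge \ee^{m\ell}(1-\ee^{-\ell})$ for $m\ge 1$ and $-\log(1-t)\le t/(1-t)$ for $t\in[0,1)$, one gets for every $\ell>0$
\begin{equation*}
\sum_{m\ge 1}\frac{1}{m}\,\frac{1}{\ee^{m\ell}-1}\ \le\ \frac{1}{1-\ee^{-\ell}}\sum_{m\ge 1}\frac{\ee^{-m\ell}}{m}\ =\ \frac{-\log(1-\ee^{-\ell})}{1-\ee^{-\ell}}\ \le\ \frac{\ee^{-\ell}}{(1-\ee^{-\ell})^{2}},
\end{equation*}
which for $\ell\ge\ell_0$ is at most $C_0\,\ee^{-\ell}$ with $C_0:=(1-\ee^{-\ell_0})^{-2}$. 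Applying this with $\ell=\ell_{\gamma_0}(X)\ge\ell_0$ reduces the problem to showing $\sum_{\gamma_0}\ee^{-\ell_{\gamma_0}(X)}<\infty$.

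For this I would invoke the geometry of infinite-area surfaces. If $\pi_1(X)$ is elementary there is at most one primitive essential class and the claim is immediate; otherwise $\Gamma$ is non-elementary, geometrically finite, and of infinite covolume, so its critical exponent satisfies $\delta=\delta(X)<1$ (see \cite{Bor16}). Fix $\delta'\in(\delta,1)$. The prime geodesic theorem for such surfaces \cite{Gui86,Lal89,Bor16} yields $\pi_X(L):=\#\{\gamma_0\ \text{primitive}:\ \ell_{\gamma_0}(X)\le L\}\asymp\ee^{\delta L}/L$, hence $\pi_X(L)\le C_{\delta'}\ee^{\delta' L}$ for all $L$ (with $\pi_X(L)<\infty$ everywhere). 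Grouping primitive geodesics by the integer part of their length, there are at most $\pi_X(k+1)\le C_{\delta'}\ee^{\delta'(k+1)}$ of them with $\ell_{\gamma_0}(X)\in[k,k+1)$, and each contributes at most $\ee^{-k}$; summing the convergent geometric series $\sum_{k\ge 0}\ee^{\delta'(k+1)-k}$ gives $\sum_{\gamma_0}\ee^{-\ell_{\gamma_0}(X)}<\infty$, and therefore the total mass is finite.

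The main obstacle here is conceptual rather than computational: the real content is recognizing that the prime geodesic theorem with critical exponent $\delta<1$ is the right input, and that the hypothesis of infinite area is exactly what forces the geometric series above to converge. Indeed the argument shows the hypothesis is sharp: for a finite-area surface of finite type one has $\delta=1$ and $\pi_X(L)\asymp\ee^{L}/L$, so $\sum_{\gamma_0}\ee^{-\ell_{\gamma_0}(X)}$ diverges, and with a matching lower bound $1/(\ee^{\ell}-1)\ge \ee^{-\ell}$ so does the total mass of essential loops.
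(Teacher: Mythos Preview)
Your proof is correct and follows essentially the same approach as the paper: rewrite the total mass via Lemma~\ref{lem:blm-hyp} as a double sum over primitive geodesics and iterations, dispose of the elementary case directly, and in the non-elementary case invoke the prime geodesic theorem together with the fact that infinite area forces the critical exponent $\delta<1$. Your version is more explicit (the inner-sum estimate and the dyadic grouping are spelled out, and you observe sharpness), while the paper simply asserts finiteness once $\delta<1$ is established; the underlying argument is the same.
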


The condition on $X$ is equivalent to being a Riemann surface of finite type that cannot be obtained by removing finitely many (including zero) points from a closed Riemann surface and therefore has at least one funnel.

It is not hard to
construct an \emph{infinite} type surface $X$ that has at least one funnel and infinitely many geodesics of length less than $1$ (e.g., by gluing infinitely many pairs of pants with boundary lengths less than $1$). This implies that $X$ has an infinite mass of noncontractible Brownian loops and explains the assumption on the finite type of $X$. 
We also note that the total mass of all Brownian loops in essential homotopy classes is infinite if $X$
is a closed Riemann surface (see Theorem~\ref{thm-ren-blm}), hence also infinite if $X$ is obtained by removing finitely many points from a closed surface (by Corollary~\ref{cor:puncture_CI}).
Now, we discuss how
to renormalize the total mass of the Brownian loop measure on a closed hyperbolic surface.

\subsection{Zeta-regularized determinant of Laplacian}

The total mass of the Brownian loop measure on a surface is infinite because of the contribution from small loops and also that of the big loops if the surface is closed. Theorem~\ref{thm-ren-blm} (proved in \cite{ang2020brownian}) gives a precise way to renormalize this total mass by removing the small and large loops measured by their total quadratic variation (the procedure depends on the metric $g$) and shows it equals the zeta-regularized $\log\detz$ of the Laplace--Beltrami operator $\D_g$ (or simply, the Laplacian), which is an interesting functional for the spectral theory of surfaces \cite{RS71,OPS} and also from perspectives of theoretical physics \cite{Pol81,Haw77,DHoker_Phong}.
The relation between the Brownian loop measure and $\log\detz \D_g$ was first pointed out in \cite{Dub_couplings,LeJan2006det}.


We show another way to express $-\log \detz\D_g$ on a closed hyperbolic surface $X$ as a renormalization of the Brownian loop measure. The following result is a rewriting of the Selberg trace formula, which builds a connection between the Laplacian spectrum and the length spectrum, and hence is helpful to translate $\log \detz\D$ into the language of Brownian loop measure. Denote $\sG(X)$ the set of all oriented closed geodesics on $X$, and $\sP(X)\sbs\sG(X)$ the subset of primitive ones. Let $N_X(L):=\#\{\gamma\in\sP(X) \mid \ell(\gamma)\leq L\}$.

\begin{prop}[See Proposition~\ref{prop:det_loop}]\label{prop:intro_det_loop} 
Let $X$ be a closed hyperbolic surface. We have
\begin{eqnarray*}
    -\log \detz\D &=& -{\rm Area}(X) E + C + \sum_{\gamma\in \sG(X)\setminus\sP(X)} \mu_X (\mc C_X (\g))  \nonumber\\
    && +\int_{L=0}^\infty 
    \frac{1}{\ee^{L} -1}
    \ \dd \bigg(N_X(L)-\widetilde\Li(\ee^L)\bigg) 
\end{eqnarray*}
where $E\approx 0.0538$ and $C\approx 0.3608$ are universal constants; $\mu_X (\mc C_X (\g))$ is the total mass of Brownian loops homotopic to $\g$; and $\widetilde\Li(x)$ is the cutoff of the logarithmic integral function $\Li(x)$ at $x=2$.
Moreover, the summation and integral above converge.
\end{prop}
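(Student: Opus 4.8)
The plan is to read off the identity from the Selberg trace formula, packaged through the Selberg zeta function. Write $Z_X(s) = \prod_{\g_0}\prod_{k\geq 0}\bigl(1 - \ee^{-(s+k)\ell(\g_0)}\bigr)$ for the product over the primitive closed geodesics $\g_0$ of $X$. It has a simple zero at $s=1$, and the trace formula applied to the heat kernel (the d'Hoker--Phong and Sarnak identity \cite{DHoker_Phong}) gives $\log\detz\D = c\,(2g-2) + \log Z_X'(1)$ for a universal constant $c$; by Gauss--Bonnet $2g-2 = \area(X)/2\pi$, so with $E := c/2\pi$ we obtain $-\log\detz\D = -E\,\area(X) - \log Z_X'(1)$. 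Everything then reduces to unfolding $-\log Z_X'(1)$ into Brownian loop masses and a length-spectrum integral.

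Using that the zero is simple, $-\log Z_X'(1) = \lim_{s\to 1^+}\bigl(-\log Z_X(s) + \log(s-1)\bigr)$, and expanding the logarithm of the Euler product, $-\log Z_X(s) = \sum_{\g_0}\sum_{j\geq 1}\frac1j\,\frac{\ee^{-js\ell(\g_0)}}{1-\ee^{-j\ell(\g_0)}}$, where the pairs $(\g_0,j)$ run over all closed geodesics $\g=\g_0^{\,j}$ with iteration number $m(\g)=j$. The terms with $j\geq2$ form a series converging locally uniformly near $s=1$ — already the crude bound $N_X(L)=O(\ee^L)$ from the prime geodesic theorem makes $\sum_{\g_0}\ee^{-2\ell(\g_0)}$ finite — and by Lemma~\ref{lem:intro_hyp} their sum at $s=1$ is $\sum_{\g_0}\sum_{j\geq2}\frac1j\frac1{\ee^{j\ell(\g_0)}-1}=\sum_{\g\in\sG(X)\setminus\sP(X)}\mu_X(\mc C_X(\g))$. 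This identifies the geodesic sum in the statement and proves its convergence.

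The $j=1$ part $F(s):=\sum_{\g_0}\frac{\ee^{-s\ell(\g_0)}}{1-\ee^{-\ell(\g_0)}}=\int_0^\infty\frac{\ee^{-sL}}{1-\ee^{-L}}\,\dd N_X(L)$ carries the singularity. Split $\dd N_X(L)=\dd\bigl(N_X(L)-\widetilde\Li(\ee^L)\bigr)+\dd\widetilde\Li(\ee^L)$. For the first piece, integrate by parts and use the prime geodesic theorem with a power-saving remainder, $N_X(L)=\Li(\ee^L)+O(\ee^{\theta L})$ with $\theta<1$ \cite{Gui86, Lal89, Bor16}; this dominates the integrand uniformly for $s$ near $1$, so the limit $s\to1^+$ exists and equals $\int_0^\infty\frac1{\ee^L-1}\,\dd\bigl(N_X(L)-\widetilde\Li(\ee^L)\bigr)$, which is thereby shown to converge. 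The second piece, $\int_0^\infty\frac{\ee^{-sL}}{1-\ee^{-L}}\,\dd\widetilde\Li(\ee^L)=\int_{\log 2}^\infty\frac{\ee^{(1-s)L}}{L\,(1-\ee^{-L})}\,\dd L$, does not depend on $X$; its only divergence as $s\to1^+$ is the logarithmic one coming from $\int^\infty\ee^{(1-s)L}\,\dd L/L$, and $C:=\lim_{s\to1^+}\bigl(\int_{\log 2}^\infty\frac{\ee^{(1-s)L}}{L\,(1-\ee^{-L})}\,\dd L+\log(s-1)\bigr)$ is a finite universal constant. The cutoff at $x=2$ (that is, at $L=\log 2>0$) is exactly what renders this last integral convergent near $0$, where the integrand behaves like $L^{-2}$, and what removes the pole of $\Li$ at $1$. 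Assembling the three contributions yields the stated formula; the numerical values $E\approx0.0538$ and $C\approx0.3608$ come from inserting the explicit d'Hoker--Phong constant and evaluating the limit defining $C$.

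The main obstacle is the passage $s\to1^+$: interchanging it with the infinite geodesic sum and with the integral against $\dd N_X$ is justified only after peeling off the ``universal'' divergent piece, and the required uniform control of the remainder genuinely needs the prime geodesic theorem with an effective power-saving error term, not just the asymptotic $N_X(L)\sim\Li(\ee^L)$. The remaining delicate point is the bookkeeping around the cutoff at $2$ — checking that $C$ is cleanly separated from the convergent $X$-dependent integral and that no stray $\area(X)$-proportional term is overlooked.
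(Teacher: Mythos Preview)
Your argument is correct and reaches the identity by a genuinely different route from the paper. The paper starts from Naud's heat-kernel formula $-\log\detz\D = -\area(X)E - \upgamma + \int_0^1 S_X(t)\,\dd t/t + \int_1^\infty (S_X(t)-1)\,\dd t/t$, where $S_X(t)$ is the geometric side of the trace formula; it then splits $S_X$ into primitive and non-primitive pieces and carries out the $t$-integrals explicitly through error functions, arriving at $C$ as a concrete one-dimensional integral ($C=-\upgamma+C_2$ with $C_2\approx0.9380$). You bypass the $t$-integration entirely by invoking the d'Hoker--Phong/Sarnak relation $\log\detz\D = c(2g-2)+\log Z_X'(1)$ and regularizing the Euler product of $Z_X$ at $s=1$; your split $j\geq2$ versus $j=1$ is exactly the same primitive/non-primitive decomposition, but the limit $s\to1^+$ replaces the heat-time integral as the regularization device, and $C$ appears instead as the finite part of a universal $L$-integral. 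Your route is shorter and structurally cleaner, at the cost of importing the determinant--zeta identity as a black box; the paper's computation is more self-contained and displays $C$ in a form one can evaluate directly. Both arguments genuinely need the prime geodesic theorem with a power-saving remainder, not just the leading asymptotic, to make the $X$-dependent integral against $\dd\bigl(N_X(L)-\widetilde\Li(\ee^L)\bigr)$ converge.
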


We note that the integral $\int_{L=0}^\infty \frac{1}{\ee^{L} -1} \ \dd N_X(L)$ is formally the total mass of Brownian loops homotopic to a primitive geodesic by Lemma~\ref{lem:intro_hyp}, although infinite. This result expresses $\log \detz \D$ of a closed hyperbolic surface $X$ only in terms of its genus (since Area$(X) = 2\pi (2\operatorname{genus}(X)- 2)$ by the Gauss--Bonnet theorem) and a rather simple function of the length spectrum of $X$. Moreover, the normalizing integral with respect to $\widetilde \Li (\ee^L)$ is universal (does not depend on $X$).

\section{Brownian loop measure on Riemannian surfaces}  \label{sec:BLM} 
In this section, we recall the definition and basic properties of the Brownian loop measure on a Riemannian surface $(X,g)$. For readers not familiar with Brownian motion on Riemannian manifolds, we have included some background in Appendix \ref{sec:basics_BM}.

  Let $x \in X$, $t > 0$, we denote by $\m W^t_x (X)$ the sub-probability measure on the path of the Brownian motion, in the space $C^0 ([0,t],X)$ endowed with the Borel $\s$-algebra induced from the uniform convergence topology, run at speed $2$ on $X$  starting from $x$ on the time interval $[0,t]$, which is killed upon hitting  $\partial X$. 
  In other words, $\m W^t_x (X)$ has total mass 
  $$ |\m W_x^t (X)| = \m W_x (X) (\{W \,|\, \text{which is well defined on } [0,t]\}) \le 1,$$
  where $\m W_x (X)$ is the probability distribution of the Brownian motion on $X$ starting from $x$, whose diffusion generator is the Laplace--Beltrami operator $\D = \operatorname{div}_g \operatorname{grad}_g$. By our convention, the total quadratic variation of a path under the measure $\m W_{x}^t$ is almost surely $4t$.
  
 We now define the measures $\m W^t_{x\to y} (X)$  on $C^0 ([0,t],X)$ on paths from $x$ to $y$ by disintegrating $\m W^t_x (X)$ according to the endpoint $y \in X$:
  $$\m W^t_{x\to y} (X):= \lim_{\vare \to 0+} \frac{\m W^t_x  (X) \1_{\{W_t \in D_{\vare} (y)\}}}{\vol_g (D_{\vare} (y))} $$
  where $D_{\vare} (y)$ is the metric ball of radius $\vare$ around $y \in X$ and $\1_{\cdot}$ is the indicator function. 
  \begin{remark}\label{rem:q_var}
 In particular, we have the equality of measures
  $$\m W^t_x(X) = \int_{X} \m W^t_{x\to y} (X) \,\dd\vol_g (y),$$
      and $\m W^t_{x\to y} (X)$ has total mass $$ \abs{\m W^t_{x\to y} (X)} : =  \lim_{\vare \to 0+} \frac{ \m W^t_x (X)(\{ W  \in C^0 ([0,t], X) \,|\, W_t \in D_{\vare} (y)\})}{\vol_g (D_{\vare} (y))} = p_{X,g}(x,y;t),$$ 
      where $p_{X,g}$ is the heat kernel on $(X,g)$ from the definition of the Brownian motion with Dirichlet boundary condition (see Appendix~\ref{sec:basics_BM}).  
  \end{remark}
 
  The \emph{parametrized Brownian loop measure} on $X$ is defined as the measure on $\{W \in C([0,t], X)\,|\, t \ge 0, W_0 = W_t\}$,
  $$\mu_X^*: = \int_0^{\infty} \frac{\dd t}{t} \int_X \m W_{x \to x}^t(X) \, \dd \vol_g(x).$$
   Since we integrate the family of measures on paths with the same starting and ending points, it is an infinite measure on the set of parametrized loops. 
   
  The \emph{Brownian loop measure} is the induced measure $\mu_X$ on the equivalence classes where $W \in C^0([0,t], X) \sim \widetilde W \in C^0([0,s], X)$ if and only if there is an increasing continuous time-reparametrization of loops $ i: [0,t]_{/0\sim t} \to [0,s]_{/0\sim s}$ such that $\widetilde W_{i(r)} = W_r$ for all $r$. 
 Namely, $\mu_X$ is the measure obtained from $\mu_X^*$ by forgetting the starting point and the time parametrization, but remembering the orientation.  
 The topology induced on the equivalent classes of loops is the uniform sup-norm $d_X^{\operatorname{loop}}$ on distance in $X$ modulo increasing reparametrization, and $\mu_X$ is Borel for this topology.
   
\begin{thm}[See \cite{LW2004loopsoup,ang2020brownian}] \label{thm:CI}
    The oriented Brownian loop measure satisfies the following two remarkable properties:
   \begin{itemize}  
     \item \emph{(Restriction property)} If $X' \subset X$, then 
     $\dd \mu_{X'} (\eta)= \1_{\eta \subset X'}  \,\dd \mu_X (\eta)$.
     
     \item \emph{(Conformal invariance)} Let $X_1 = (X, g)$ and $X_2 = (X, \ee^{2\sigma} g)$ be two conformally equivalent Riemann surfaces, where $\sigma \in C^{\infty} (X, \m R)$. We have
     $\mu_{X_1} = \mu_{X_2}.$
   \end{itemize}
\end{thm}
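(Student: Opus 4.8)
\textbf{Proof plan for Theorem~\ref{thm:CI}.}

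The plan is to prove both properties starting from the definition of $\mu_X$ as the pushforward of $\mu_X^*$, and hence reduce everything to statements about the family of heat kernels $p_{X,g}$ and the parametrized loop measures $\m W^t_{x\to x}(X)$. For the restriction property, the key observation is that for $X' \subset X$, the Brownian motion on $X'$ killed upon hitting $\partial X'$ is exactly the Brownian motion on $X$ killed upon hitting $\partial X'$, i.e., the same diffusion but with a larger killing set. Concretely, for a loop $W \in C^0([0,t],X)$ with $W_0 = W_t = x$, one has $W \subset X'$ if and only if $W$ never hits $\partial X'$, so the restriction of $\m W^t_{x\to x}(X)$ to $\{\eta \subset X'\}$ equals $\m W^t_{x\to x}(X')$ for $x \in X'$ (and the latter measure is zero for $x \notin X'$). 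Integrating over $x$ and $t$ with the weights $\dd\vol_g(x)$ and $\dd t/t$, which are the same on $X'$ as the restriction of those on $X$ since the metric is unchanged, gives $\1_{\eta \subset X'}\,\dd\mu^*_X(\eta) = \dd\mu^*_{X'}(\eta)$; pushing forward to unparametrized loops yields the claim. I expect this to be essentially bookkeeping once the identification of killed diffusions is stated carefully; the one subtlety is to note that $\{\eta \subset X'\}$ is the same event before and after forgetting the parametrization, so the pushforward commutes with the restriction.

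For conformal invariance the argument is more delicate because it genuinely uses the two-dimensionality: a conformal change of metric $g \mapsto \ee^{2\sigma}g$ changes the heat kernel and the generator, but it acts on Brownian \emph{paths} only through a time change. The plan is to recall the classical fact that in dimension two, if $B$ is a $g$-Brownian motion then the same path, reparametrized by the additive functional $A(t) = \int_0^t \ee^{2\sigma(B_s)}\,\dd s$ (its time-changed inverse), is an $\ee^{2\sigma}g$-Brownian motion; this is because $\D_{\ee^{2\sigma}g} = \ee^{-2\sigma}\D_g$ in $2$D, and a time change of a diffusion multiplies the generator by the corresponding rate. The crucial point is then that the parametrized loop measure $\mu_X^*$ is \emph{not} invariant — it transforms by this time change — but the combination $\dd t/t \otimes \m W^t_{x\to x}(X)\,\dd\vol_g(x)$ is built so that the time change is absorbed. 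The cleanest way I would organize this: show that the measure $\int_0^\infty \frac{\dd t}{t}\int_X \m W^t_{x\to x}(X)\,\dd\vol_g(x)$ on \emph{unparametrized} loops is unchanged, by using the disintegration formula from Remark~\ref{rem:q_var}, writing $\mu^*_X = \int_0^\infty \frac{\dd t}{t}\int_X \m W^t_{x\to x}(X)\,\dd\vol_g(x)$ and transporting under the time change; the Jacobian factors coming from $\dd t/t$ versus $\dd A/A$ and from $\dd\vol_g$ versus $\dd\vol_{\ee^{2\sigma}g} = \ee^{2\sigma}\,\dd\vol_g$ must cancel against the factor $\ee^{2\sigma(x)}$ relating $\m W^t_{x\to x}(X,g)$ and the time-changed measure at the base point.

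I would carry out the steps in this order: (1) fix notation and recall from the appendix the identification of killed Brownian motions used for restriction; (2) prove the restriction property as above; (3) recall the $2$D time-change lemma relating $g$- and $\ee^{2\sigma}g$-Brownian motions, with the explicit additive functional $A$; (4) compute how $\mu^*_X$ transforms under this time change, tracking all three factors ($\dd t/t$, $\dd\vol_g(x)$, and the heat-kernel/local-time normalization at the base point); (5) check the factors cancel, so that the induced measure on unparametrized loops agrees, concluding $\mu_{X_1} = \mu_{X_2}$. The main obstacle is step (4)–(5): one has to be careful that forgetting the parametrization is what makes the $\dd t/t$ weight match up with $\dd A / A$ along each individual loop (the time change is a bijection $[0,t]_{/0\sim t} \to [0,A(t)]_{/0\sim A(t)}$ fixing the base point, so it does preserve the equivalence class but rescales the lifetime), and that the base-point weight $\dd\vol_g(x)$ combines correctly with the local behavior of the loop measure near $x$ — this is exactly where the conformal factor $\ee^{2\sigma(x)}$ enters and cancels. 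Since the statement is cited to \cite{LW2004loopsoup,ang2020brownian}, I would also point out at the end that in the planar case this is Lawler--Werner's original computation and the Riemannian case is in \cite{ang2020brownian}, so the argument here can be kept brief.
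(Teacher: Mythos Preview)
Your plan for the restriction property is correct and matches the paper's one-line justification. For conformal invariance you have the right ingredients --- the two-dimensional time-change lemma and the observation that the volume form picks up $\ee^{2\sigma}$ --- but step (4)--(5) as you describe it will not close. The difficulty is that the time change $t \mapsto A(t) = \int_0^t \ee^{2\sigma(W_s)}\,\dd s$ is path-dependent, so there is no change of variables ``$\dd t/t \to \dd A/A$'' available in the outer integral $\int_0^\infty \frac{\dd t}{t}$: the pushforward of $\m W^t_{x\to x}(X_1)$ under the time change is a mixture over many different $\ee^{2\sigma}g$-durations, not supported at a single new time. If you track the factors at the level of rooted parametrized loops you will find they do \emph{not} cancel --- the time-changed $\mu^*_{X_1}$ and $\mu^*_{X_2}$ are genuinely different rooted measures, since the root is uniform in $g$-time under the first and in $\ee^{2\sigma}g$-time under the second.

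The paper's resolution is a re-rooting identity. Writing $\m W_{x\to x}(X_1) := \int_0^\infty \m W^t_{x\to x}(X_1)\,\dd t$, one first shows $\m W_{x\to x}(X_1) = \m W_{x\to x}(X_2)$ modulo time parametrization (via the Green function when $\partial X$ is nonpolar, and by exhaustion otherwise). Then one observes that for \emph{any} nonnegative $F$ on rooted loops with $\int_0^{t_1(\eta)} F(\theta_r\eta)\,\dd r = 1$ for every $\eta$, the measure $\int_X F(\eta)\,\m W_{x\to x}(X_1)\,\dd\vol_g(x)$ projects to $\mu_{X_1}$ upon forgetting root and parametrization. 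The choice $F = 1/t_1$ recovers $\mu^*_{X_1}$; the choice $F(\eta) = \ee^{2\sigma(\eta(0))}/t_2(\eta)$ also satisfies the normalization (since $\int_0^{t_1} \ee^{2\sigma(\eta(r))}\,\dd r = t_2$) and, after invoking the conformal invariance of $\m W_{x\to x}$, yields exactly $\mu^*_{X_2}$. This re-rooting step is the missing idea in your plan: the cancellation only happens after the root is forgotten, and it is mediated by the freedom in choosing $F$, not by a Jacobian in the lifetime variable.
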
   
For the reader's convenience, we summarize the proof of this theorem.
\begin{proof}
    The restriction property follows directly from the definition. 

To prove the conformal invariance of the Brownian loop measure, we first show that the measure on continuous paths from $x$ to $y$ in $X$ 
$$\m W_{x \to y} (X_1): = \int_0^\infty \m W^t_{x \to y} (X_1) \,\dd t $$
is conformally invariant modulo time parametrization. When $x \neq y$ and $X$ has a nonpolar boundary, this follows from two facts: the total mass $|\m W_{x \to y} (X_1)|$ equals the Green function $G_g (x,y)$ which is conformally invariant, and that the normalized measures $\m W_{x \to y} (X_1)/|\m W_{x \to y} (X_1)|$ and $\m W_{x \to y} (X_2)/|\m W_{x \to y} (X_2)|$ agree modulo time parametrization since Brownian motion does not depend on the conformal factor modulo time parametrization.  When $X$ has a polar or empty boundary, then we have 
$$  \m W_{x\to y} (X_1) = \lim_{\vare \to 0\splus} \uparrow \m W_{x\to y} (X_1 \setminus B(x_0,\vare)) = \lim_{\vare \to 0\splus} \uparrow \m W_{x\to y} (X_2 \setminus B(x_0,\vare)) = \m W_{x\to y} (X_2) $$
where $x_0 \notin \{x,y\}$ and $B(x_0,\vare)$ is a ball of radius $\vare$ small with respect to $g$ centered at $x_0$ such that $x, y \notin B(x_0,\vare)$.
Finally, when $x = y$, we may use
$$\m W_{x\to x} (X_1)  = \lim_{\vare \to 0\splus} \uparrow \m W_{x\to x} (X_1) \1_{\eta \nsubseteq B(x,\vare)}$$
and apply the same argument above. Summarizing, we obtain for all $x, y \in X$, we have
\begin{equation}\label{eq:conf_root}
    \m W_{x\to y} (X_1)  = \m W_{x\to y} (X_2) 
\end{equation}
modulo time parametrization. See \cite[Lem.\,3.1]{ang2020brownian} or \cite[Sect.\,3.2.1]{LW2004loopsoup}.

Now, the rooted loop measure is written as
$$\mu^*_{X_1} : = \int_{X} \frac{1}{t_1(\eta)} \m W_{x\to x} (X_1) \,\dd \vol_g (x)$$
where for a loop $\eta \sim \m W_{x\to x} (X_1)$ rooted at $x$, $t_1(\eta)$ is one-quarter of the total quadratic variation (see Remark~\ref{rem:q_var}) of $\eta$ in $X_1$.  Note that upon forgetting the root and parametrization in $\mu^*_{X_1}$, we obtain $\mu_{X_1}$. Vice versa, we obtain $\mu^*_{X_1}$ from $\mu_{X_1}$ by uniformly rooting an unrooted loop according to its quadratic variation.

More generally, for every rooted loop $\eta \sim \m W_{x\to x} (X_1)$, we parametrize $\eta$ by its ($1/4$ of) quadratic variation. Let $r \in [0,t_1(\eta)]$, let $\t_r \eta$ be the same loop with the same orientation as $\eta$ but rooted at $\eta (r)$. For any $F \ge 0$ bounded measurable function on rooted loops in $X$ such that for all rooted loop $\eta$,
$\int_{0}^{t_1(\eta)} F(\t_r \eta) \,\dd r = 1$, then 
\begin{equation}\label{eq:F_id}
   \int_X F (\eta) \m W_{x\to x} (X_1) \,\dd \vol_g (x)  = \mu_{X_1} 
\end{equation}
upon forgetting the root and parametrization. In fact, the left-hand side is the measure on rooted loops obtained by rooting the unrooted loop according to $F$ (which is a probability on each equivalence class of re-rooting).
In this way, $\mu^*_{X_1}$ is obtained by taking $F(\cdot) = 1/t_1(\cdot)$.  

We apply this observation for the following function $F$:  for a rooted loop $\eta \sim \m W_{x \to x} (X_1)$, 
$$F (\eta) : = \frac{\ee^{2\s (\eta(0))}}{\int_0^{t_1(\eta)} \ee^{2\s (\eta (t))}\, \dd t} =\frac{\ee^{2\s (x)}}{t_2(\eta)} $$
where $\eta$ is parametrized by ($1/4$ of) quadratic variation, $\eta(0) = x$ is the root of $\eta$, and $t_2 (\eta)$ is a quarter of the total quadratic variation of $\eta$ under $e^{2\s} g$. The last equality follows from the expression of the conformal change of quadratic variation.  Then the left-hand side of \eqref{eq:F_id} gives
\begin{equation}
\int_X \frac{\ee^{2\s (x)}}{\int_0^{t_1(\eta)} \ee^{2\s (\eta (t))}\, \dd t}  \m W_{x\to x} (X_1) \,\dd \vol_g (x)
 =  \int_X \frac{1}{t_2(\eta)} \m W_{x\to x} (X_2) \,\dd \vol_{\ee^{2\s} g} (x) = \mu_{X_2}^*
\end{equation}
which implies $\mu_{X_1} = \mu_{X_2}$ upon forgetting the parametrization and the root.
\end{proof}

\section{Brownian loop measure in a homotopy class}\label{sec:blm_hom}
In this section, we compute the total mass of the Brownian loop measure in a fixed homotopy class of closed curves on a Riemann surface $X$. 
If $X$ has a non-empty boundary, we let $X'=X\setminus\partial X$ be the Riemann surface without boundary. As the Brownian loop measure is defined by killing the Brownian motion when it hits $\partial X$, we have $\mu_X = \mu_{X'}$. Therefore, we assume $X$ is a Riemann surface without boundary. 

Since the Brownian loop measure is conformally invariant, 
choosing a complete metric with constant curvature on $X$ allows us to explicitly compute the total mass. 
In particular, $X$ is stochastically complete (i.e., Brownian motion is defined for infinite time, see Appendix~\ref{sec:basics_BM}).
If $X$ is not hyperbolic, then $X$ is conformal to one of $\widehat\C, \C, \C\setminus\{0\}$ or flat tori. On $\widehat\C, \C$ and $\C\setminus\{0\}$, every homotopy class of closed curves is either homotopically trivial or homotopic to a puncture point, and hence the mass of Brownian loops is infinite. So, we will only consider complete hyperbolic surfaces and flat tori. 


\subsection{On hyperbolic surfaces}

Let $X$ be a Riemann surface without boundary endowed with the unique complete hyperbolic metric. Then it is isometric to a quotient of the upper half plane $X = \G \backslash \m H$, where $\G$ is a Fuchsian group (i.e., a discrete subgroup of ${\rm PSL}(2,\R)$ which acts on $\m H$ by $\big(\begin{smallmatrix}
    a & b \\ c & d
\end{smallmatrix}\big):  z \mapsto \frac{az + b}{ cz +d}$) without elliptic elements. 
There exists a unique closed geodesic in every essential (i.e., non-trivial and not homotopic to a cusp) free homotopy class. Let $\g$ be an oriented closed geodesic on $X$, and denote $\ell_\g(X)=\ell(\g)$ be the length of it. We write 
$$\mc C_{X}(\g) := \{\eta \,|\, \eta \text{ is a closed curve in $X$ free homotopic to }\g\}. $$
We say $\g$ (or $\mc C_{X}(\g)$ or the corresponding conjugacy class in $\G$) is \emph{primitive} if it is not an iteration of some other closed geodesic for more than once. Say $\g$ has \emph{iteration number} equal to $m(\g)\in\Z_{>0}$ if it is $m(\g)$-time iteration of a primitive oriented closed geodesic.

\begin{lem}\label{lem:loop_heat}
    We have for all $x \in X$, $t > 0$,
$$\m W_{x\to x}^t (X) (\mc C_X (\g))  = \sum_{h \in [\g]} |\m W_{z\to h \cdot z}^t (\m H)|$$
where $[\gamma]$ is the conjugacy class of $\gamma$ in $\Gamma$ and $z \in \m H$ is any lift of $x$.
\end{lem}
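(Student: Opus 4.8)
The plan is to decompose the Brownian loop measure on $X = \G\backslash\m H$ according to the free homotopy class, and then lift each class to $\m H$ via the universal covering map $\pi\colon\m H\to X$. Concretely, I would fix a lift $z\in\m H$ of $x$, and note that the fiber $\pi^{-1}(x)$ is exactly the orbit $\G\cdot z$. A parametrized loop $\eta\in C^0([0,t],X)$ rooted at $x$ lifts uniquely to a path $\widetilde\eta\in C^0([0,t],\m H)$ starting at $z$; this lift is a loop on $X$ but its endpoint $\widetilde\eta(t)$ is some point $h\cdot z$ with $h\in\G$, and the deck transformation $h$ is determined by (and determines) the free homotopy class of $\eta$ together with the choice of base lift. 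The key point is that the map $\eta\mapsto(\widetilde\eta, h)$ pushes the heat-kernel measure $\m W^t_{x\to x}(X)$ forward to $\sum_{h\in\G}\m W^t_{z\to h\cdot z}(\m H)$: this is the standard ``method of images'' identity $p_X(x,x;t)=\sum_{h\in\G}p_{\m H}(z, h\cdot z;t)$ for the heat kernel on a quotient, upgraded from an identity of total masses to an identity of measures on paths, using that $\m H\to X$ is a local isometry and hence Brownian motion on $X$ is the projection of Brownian motion on $\m H$ (run for infinite time, since $X$ and $\m H$ are stochastically complete).

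Next I would identify which $h$ contribute to the homotopy class $\mc C_X(\g)$. A loop $\eta$ at $x$ is freely homotopic to $\g$ if and only if, after lifting based at $z$, the associated deck transformation $h$ lies in the conjugacy class $[\g]$ in $\G$: changing the base lift $z\mapsto g\cdot z$ conjugates $h\mapsto ghg^{-1}$, and distinct elements of a fixed conjugacy class correspond bijectively to the distinct based homotopy classes inside the single free homotopy class $\mc C_X(\g)$ as the base lift ranges over $\pi^{-1}(x)$ — but since we have already fixed one lift $z$, the loops rooted at $x$ in class $\mc C_X(\g)$ correspond precisely to lifts $\widetilde\eta$ from $z$ to $h\cdot z$ with $h$ running over all of $[\g]$. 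Restricting the pushforward identity to this sub-collection of $h$'s gives
\[
\m W^t_{x\to x}(X)\big(\mc C_X(\g)\big) \;=\; \sum_{h\in[\g]} \big|\m W^t_{z\to h\cdot z}(\m H)\big|,
\]
which is the claim; note the right side is manifestly independent of the choice of lift $z$ because conjugating $\G$ by the isometry moving one lift to another permutes $[\g]$ and preserves $|\m W^t_{z\to h\cdot z}(\m H)| = p_{\m H}(z,h\cdot z;t)$, which depends only on the hyperbolic distance $\dist_{\m H}(z, h\cdot z)$.

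The main obstacle I anticipate is the bookkeeping of the correspondence between deck transformations and homotopy classes — making precise that lifting a based loop $\eta$ at $x$ gives a well-defined $h\in\G$, that $h\in[\g]$ exactly characterizes membership in $\mc C_X(\g)$, and that this correspondence is a bijection compatible with the measure pushforward (so no loop is counted twice and none is missed). A secondary technical point is justifying the interchange of the homotopy decomposition with the disintegration $\m W^t_x(X) = \int_X \m W^t_{x\to y}(X)\,\dd\vol_g(y)$ and with the $\vare\to 0$ limits defining $\m W^t_{x\to y}$; this is routine since the homotopy class is a measurable (in fact closed-and-open, on the event the loop avoids a suitable cut locus) function of the path, and the heat kernel $p_{\m H}(z, h\cdot z;t)$ decays rapidly in $\dist_{\m H}(z, h\cdot z)$, so the sum over the discrete orbit $[\g]$ converges and all manipulations are legitimate.
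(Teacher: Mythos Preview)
Your proposal is correct and follows essentially the same approach as the paper: lift Brownian paths on $X$ to $\m H$ via the covering map, identify the loops in $\mc C_X(\g)$ as those whose lift from $z$ ends at $h\cdot z$ for $h$ in the conjugacy class $[\g]$, and use disintegration with respect to the endpoint (the ``method of images'') to obtain the formula. The paper's proof makes the disintegration step slightly more explicit via $\vare$-balls, but the content is the same.
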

\begin{proof}
    Brownian motion on $X$ starting from $x$ is a Markov process that only depends on the local geometry of $X$. Hence, the measure on Brownian paths $\m W_x^t (X)$ lifts to $\m W_z^t (\m H)$. For a Brownian path $\eta$ in $\m H$  to be projected down to a curve in $\mc C_X (\g)$, the endpoint of $\eta$ has to be $h \cdot z \in \m H$ for some $h \in [\g]$. We obtain the identity via the disintegration with respect to the endpoints of the lift of $\m W_{x}^t (X)$:
    \begin{align*}
        &\m W_{x\to x}^t (X) (\mc C_X (\g)) \\
        & = \lim_{\vare \to 0+} \frac{ \m W_{x}^t (X) (\{W \in C^0 ([0,t], X)\,|\, W_t \in D_{\vare, X} (x), \exists \eta \in \mc C_X (\g), d_X^{\operatorname{loop} } (W, \eta) < \vare \})}{\vol (D_{\vare, X} (x))} \\
        & = \lim_{\vare \to 0+} \sum_{h \in [\g]}\frac{\m W^t_{z} (\m H) (\{W \in C^0 ([0,t], \m H)\,|\, W_t \in D_{\vare,\m H} (h \cdot z)\})}{\vol (D_{\vare,\m H} (h\cdot z))} \\
        & =  \sum_{h \in [\g]} |\m W_{z\to h \cdot z}^t (\m H)|,
    \end{align*} 
   where $D_{\vare,X} (x)$ denotes the ball of radius $\vare$ in $X$ centered at $x$, and we used the fact that for $\vare$ small enough (smaller than the injective radius of $X$), $\vol (D_{\vare,X} (x)) = \vol (D_{\vare,\m H} (h\cdot z))$ for all $h \in \PSL(2,\m R)$.
 \end{proof}

We recall that the total mass $|\m W_{z \to w}^t (\m H)|$  equals the heat kernel $p_\H(z,w;t):\H\times\H\times(0,\infty)\to\R$ which is given explicitly as (see e.g. \cite[Thm.~7.4.1]{Buser10} \cite[Thm.~3.12]{Ber16})
\begin{equation}\label{eq-heat-ker-H}
    p_\H(z,w;t)= \frac{\sqrt{2}}{(4\pi t)^{3/2}} \ee^{-t/4} \int_{d(z,w)}^\infty \frac{r \ee^{-r^2/(4t)}}{\sqrt{\cosh{r}-\cosh{d(z,w)}}} \dd r
\end{equation}
where $d(z,w)$ is the hyperbolic distance between $z$ and $w$. 

\begin{lem}\label{lem:blm-hyp}
Let $X=\Gamma\backslash\H$ be a complete hyperbolic surface without boundary. Let $\gamma$ be an oriented closed geodesic on $X$ of length $\ell(\gamma)$ and iteration number $m(\gamma)$. We have
\begin{equation}\label{eq:blm_H}
\mu_X (\mc C_X (\g)) = \frac{1}{m(\gamma)}\ \frac{1}{\ee^{\ell(\gamma)}-1}.
\end{equation}
\end{lem}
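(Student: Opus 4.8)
The plan is to combine Lemma~\ref{lem:loop_heat} with the explicit heat kernel formula \eqref{eq-heat-ker-H} and carry out the $t$-integration defining $\mu_X^*$. Starting from
$$\mu_X^*(\mc C_X(\g)) = \int_0^\infty \frac{\dd t}{t} \int_X \m W_{x\to x}^t(X)(\mc C_X(\g))\, \dd\vol_g(x),$$
I would first substitute Lemma~\ref{lem:loop_heat} to write the inner integrand as $\sum_{h\in[\g]} p_\H(z, h\cdot z; t)$ for any lift $z$ of $x$. Unfolding the integral over $X = \G\backslash\H$ against the sum over the conjugacy class $[\g]$, the standard Fuchsian-group unfolding turns $\int_X \sum_{h\in[\g]}(\cdots)\dd\vol$ into $\int_{\langle\g_0\rangle\backslash\H}(\cdots)\dd\vol_\H$, where $\g_0$ is the primitive element underlying $\g$ (the centralizer of a hyperbolic element is its primitive root's cyclic group, which is why the factor $1/m(\g)$ appears). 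So the quantity becomes $\frac{1}{m(\g)}$ times $\int_0^\infty \frac{\dd t}{t}\int_{\langle\g_0\rangle\backslash\H} p_\H(z,\g\cdot z;t)\,\dd\vol_\H(z)$; here I should be careful that the reparametrization from $\mu_X^*$ to $\mu_X$ (forgetting root and time) is harmless since we are computing a total mass of an entire homotopy class.

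The core computation is then the single integral
$$I(\ell) := \int_0^\infty \frac{\dd t}{t}\int_{\langle\g_0\rangle\backslash\H} p_\H(z,\g\cdot z;t)\,\dd\vol_\H(z),$$
where $\g$ is hyperbolic with translation length $\ell = \ell(\g)$. I would conjugate so that $\g$ acts as $z\mapsto e^\ell z$ on $\H$, for which a convenient fundamental domain for $\langle\g_0\rangle$ is an annular strip, and use that $d(z,\g z)$ depends only on the "transverse" coordinate — parametrizing by $u = d(z, \g z)$, or equivalently by Fermi coordinates about the axis, one gets $\cosh d(z,\g z) = 1 + 2\sinh^2(\ell/2)\cosh^2\rho$ where $\rho$ is the distance from $z$ to the axis. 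Plugging in \eqref{eq-heat-ker-H}, swapping the order of integration (Tonelli, everything positive), and doing the $t$-integral first via $\int_0^\infty \frac{\dd t}{t}\cdot\frac{1}{(4\pi t)^{3/2}}e^{-t/4}e^{-r^2/(4t)} = \frac{1}{(2\pi)^{3/2}}\cdot\frac{e^{-r/2}\sqrt{\cdot}}{r}$ type identity (this is essentially $\int_0^\infty t^{-5/2}e^{-a/t - bt}\dd t$, a Bessel-type integral evaluating in closed form because of the half-integer exponent), should collapse the expression dramatically. After the dust settles I expect the spatial integral over $\rho$ and the $r$-integral to combine into an elementary integral whose value is exactly $1/(e^\ell - 1)$.

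The main obstacle I anticipate is bookkeeping the interchange of the three integrations (over $t$, over the fundamental domain, and over $r$ in the heat kernel formula) and correctly setting up the geometric coordinates so that the Jacobian factors and the range of integration are right — in particular matching $d(z, \g z)$ to the Fermi coordinate and getting the $\cosh r - \cosh d$ denominator to cooperate. A cleaner alternative that sidesteps some of this: instead of integrating the heat kernel directly, use the known closed-form evaluation of $\int_0^\infty \frac{p_\H(z, \g z; t)}{?}$ — or better, recall that the resolvent / Green's-function version is exactly what appears, namely $\mu_X^*(\mc C_X(\g))$ relates to $\sum_{k\ge1}$-type geometric series coming from iterating, and McKean's computation (as cited, \cite{McKean72,LeJan_BloopTop}) already isolates the relevant Abel-transform identity. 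I would structure the writeup around the direct heat-kernel integral for self-containedness but cite McKean for the key one-variable integral evaluation if it becomes too long. As a sanity check, the answer must be symmetric under reversing orientation of $\g$ (it is, since $\ell$ and $m$ are orientation-independent) and, for $\g = \g_0^m$, must equal $\frac1m \cdot \frac{1}{e^{m\ell(\g_0)}-1}$, consistent with the stated formula.
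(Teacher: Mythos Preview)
Your overall strategy---use Lemma~\ref{lem:loop_heat}, unfold the sum over the conjugacy class to reduce to an integral over $\langle\gamma_0\rangle\backslash\H$ of $p_\H(z,\gamma z;t)$, then evaluate via the explicit heat kernel \eqref{eq-heat-ker-H}---is exactly the paper's approach. However, your unfolding is off by a factor of $m(\gamma)$. The standard unfolding gives
\[
\int_F\sum_{h\in[\gamma]}p_\H(z,hz;t)\,\dd\rho(z)=\int_{\langle\gamma_0\rangle\backslash\H}p_\H(z,\gamma z;t)\,\dd\rho(z)
\]
with \emph{no} prefactor $1/m$; the factor $1/m(\gamma)$ emerges later, because the fundamental strip for $\langle\gamma_0\rangle$ has logarithmic height $\ell(\gamma)/m(\gamma)$ (that is, $\int_1^{e^{\ell/m}}\dd y/y=\ell/m$). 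In particular your $I(\ell)$ is not a function of $\ell$ alone---it already contains the $1/m$---so with your bookkeeping you would end up with $\tfrac{1}{m^2}\tfrac{1}{e^\ell-1}$.

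On the order of integration: the paper integrates spatially first (for fixed $t$), using an Abel-transform identity to obtain the Selberg trace-formula contribution $\tfrac{1}{4\sqrt{\pi t}}e^{-t/4}e^{-\ell^2/4t}\tfrac{\ell}{m\sinh(\ell/2)}$, and then evaluates $\int_0^\infty t^{-1}(\cdots)\,\dd t$ via an error-function antiderivative. Your proposed route---do the $t$-integral first via the $K_{3/2}$ Bessel identity, then handle the $r$ and transverse integrals---is a legitimate alternative and should also close, though the paper's order has the advantage that the intermediate expression is exactly the heat-trace term reused in Section~\ref{sec:det}. The Tonelli justification you worry about is immediate since every integrand is nonnegative.
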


\begin{proof}
Since $\mu_X = \int_0^\infty \frac{\dd t}{t} \int_X \m W^t_{x\to x} \,\dd \vol_g (x)$ by definition, Lemma~\ref{lem:loop_heat} and the comment after it shows that \eqref{eq:blm_H} is equivalent to
\begin{equation*}
    \int_0^\infty \frac{1}{t} \sum_{h\in[\gamma]} \int_{F} p_\H(z,h\cdot z;t) \,\dd \rho(z) \,\dd t 
    = \frac{1}{m(\gamma)}\ \frac{1}{\ee^{\ell(\gamma)}-1}
\end{equation*}
where $F\sbs\H$ is a fundamental domain of $X$, $\dd \rho(z)$ is the hyperbolic area measure on $\H$. 
We now show this identity. The proof is essentially part of the proof of the Selberg trace formula (see, e.g.,  \cite[Ch.~9]{Buser10} \cite[Ch.~5]{Ber16}). We show it here for completeness. 

Fix an element $h_0\in[\gamma]$ and then $[\gamma]=\{h_1^{-1} h_0 h_1 \mid h_1\in\Gamma\}$. Two elements $h_1^{-1} h_0 h_1 = h_2^{-1} h_0 h_2$ if and only if $h_2 h_1^{-1} h_0 = h_0 h_2 h_1^{-1}$, that is, $h_2 h_1^{-1}$ is in the centralizer of $h_0$ in $\Gamma$. Since $\Gamma$ is a discrete subgroup of ${\rm PSL}(2,\R)$, the centralizer of $h_0\in\Gamma$ is just the set $\brac{\delta}:=\{\delta^k \mid k\in\Z\}$ where $\delta$ is primitive and $h_0=\delta^{m(\gamma)}$. Then 
\begin{eqnarray*}
    \sum_{h\in[\gamma]} \int_{F} p_\H(z,h\cdot z;t) \,\dd\rho(z)
    &=& \sum_{[h_1]\in\brac{\d}\backslash\Gamma} \int_{F} p_\H(z,h_1^{-1} h_0 h_1\cdot z;t) \,\dd\rho(z) \\
    &=& \sum_{[h_1]\in\brac{\d}\backslash\Gamma} \int_{F} p_\H(h_1\cdot z,h_0 h_1\cdot z;t) \,\dd\rho(z) \\
    &=& \sum_{[h_1]\in\brac{\d}\backslash\Gamma} \int_{h_1\cdot F} p_\H(z,h_0\cdot z;t) \,\dd\rho(z) 
\end{eqnarray*}
where for each class $[h_1]\in\brac{\d}\backslash\Gamma$ we may choose any representation element.

Since $h_0$ is a hyperbolic element, up to conjugation, we may assume 
$$h_0= \left(\begin{matrix}
    \ee^{L/2} & 0 \\
    0 & \ee^{-L/2}
\end{matrix}\right) \text{ and } 
\delta= \left(\begin{matrix}
    \ee^{L/2m} & 0 \\
    0 & \ee^{-L/2m}
\end{matrix}\right)$$
where $L=\ell(\gamma)$ and $m=m(\gamma)$. Every point in $\H=\bigcup\limits_{h_1\in\Gamma} h_1\cdot F$ can be pulled back into the strip domain to a unique point in $F_{\delta}=\{x+\ii y \mid 1\leq y<\ee^{L/m}\}$ by the $\brac{\d}$ action, so 
$$\sum\limits_{[h_1]\in\brac{\d}\backslash\Gamma} \int_{h_1\cdot F} f(z) \,\dd \rho(z) = \int_{F_{\delta}} f(z) \,\dd \rho(z)$$ for any measurable function $f$. Thus 
\begin{eqnarray*}
  &&  \sum_{h\in[\gamma]} \int_{F} p_\H(z,h\cdot z;t) \, \dd \rho(z) =\int_{F_{\delta}} p_\H(z,\ee^L z;t) \,\dd\rho(z) \\
    &&=  \frac{\sqrt{2}\ee^{-t/4}}{(4\pi t)^{3/2}}  \int_\R \int_1^{\ee^{L/m}} \int_{d(z,\ee^L z)}^\infty \frac{r \ee^{-r^2/4t}\,\dd r }{\sqrt{\cosh{r}-\cosh{d(z,\ee^L z)}}} \frac{\dd y \, \dd x}{y^2}
\end{eqnarray*}
where $z=x+{\ii}y$. On $\H$, the hyperbolic distance satisfies 
$$\cosh d(z,\ee^L z)=1+2\left(\sinh\frac{L}{2}\right)^2\left(1+\frac{x^2}{y^2}\right).$$ 
For $u\geq 1$, we let
\begin{eqnarray}\label{eq-def-U}
    U(u)&:=& \int_{\arccosh (u^2)}^\infty \frac{r \ee^{-r^2/4t}}{\sqrt{\cosh{r}-u^2}} \, \dd r \nonumber\\
    &=& -\int_{r=\arccosh (u^2)}^\infty \frac{1}{\sqrt{\cosh{r}-u^2}} \,\dd \left(2t \ee^{-r^2/4t}\right) \nonumber\\ 
    &=& -\frac{1}{\pi} \int_{v=u}^\infty \frac{1}{\sqrt{v^2-u^2}} \,\dd\left(2\pi t \ee^{-(\arccosh v^2)^2/4t}\right).
\end{eqnarray}
Then 
\begin{eqnarray*}
    &&\sum_{h\in[\gamma]} \int_{F} p_\H(z,h\cdot z;t) \, \dd\rho(z) \\
    &&= \frac{\sqrt{2}\ee^{-t/4}}{(4\pi t)^{3/2}}   \int_1^{\ee^{L/m}} \int_\R U\left(\sqrt{1+2\left(\sinh\frac{L}{2}\right)^2\left(1+\frac{x^2}{y^2}\right)}\right) \frac{\dd x \dd y}{y^2} \\
    &&= \frac{\sqrt{2}\ee^{-t/4}}{(4\pi t)^{3/2}}   \int_1^{\ee^{L/m}} \int_\R U\left(\sqrt{1+2\left(\sinh\frac{L}{2}\right)^2(1+\theta^2)}\right) \,\dd\theta \frac{\dd y}{y} \\
    &&= \frac{\sqrt{2}\ee^{-t/4}}{(4\pi t)^{3/2}}  \left(\int_1^{\ee^{L/m}} \frac{dy}{y} \right) \int_{\sqrt{1+2(\sinh\frac{L}{2})^2}}^{\infty}  \frac{\sqrt{2}}{\sinh\frac{L}{2}} \frac{U(u) u}{\sqrt{u^2-(1+2(\sinh\frac{L}{2})^2)}} \,\dd u \\
    &&= \frac{\ee^{-t/4}}{(4\pi t)^{3/2}}  \frac{L}{m \sinh\frac{L}{2}} \  2\int_{\sqrt{1+2(\sinh\frac{L}{2})^2}}^{\infty} \frac{U(u) u}{\sqrt{u^2-(1+2(\sinh\frac{L}{2})^2)}} \,\dd u
\end{eqnarray*}
where in the third equation we change the variable $u=\sqrt{1+2(\sinh\frac{L}{2})^2(1+\theta^2)}$. Recall that the Abel transform $V(v)=2\int_v^\infty \frac{U(u) u}{\sqrt{u^2-v^2}} \dd u$ has the inverse transform $U(u)= -\frac{1}{\pi}\int_u^\infty \frac{1}{\sqrt{v^2-u^2}}\dd V(v)$. Compared with \eqref{eq-def-U}, we have
\begin{align*}
   & 2\int_{\sqrt{1+2(\sinh\frac{L}{2})^2}}^{\infty} \frac{U(u) u}{\sqrt{u^2-(1+2(\sinh\frac{L}{2})^2)}} \,\dd u \\
    &= 2\pi t\ {\rm exp}\left(-\frac{(\arccosh (1+2(\sinh\frac{L}{2})^2))^2}{4t}\right) = 2\pi t \ee^{-L^2/4t}.
\end{align*}
So
\begin{equation}\label{eq-int-heat-ker}
    \sum_{h\in[\gamma]} \int_{F} p_\H(z,h\cdot z;t) \,\dd \rho(z) = \frac{1}{4\sqrt{\pi t}} \ee^{-t/4} \ee^{-L^2/4t} \frac{L}{m \sinh\frac{L}{2}}
\end{equation}
and
\begin{eqnarray}\label{eq-blm-compute}
    &&\int_0^\infty \frac{1}{t} \sum_{h\in[\gamma]} \int_{F} p_\H(z,h\cdot z;t) 
    \,\dd \rho(z)\, \dd t \nonumber\\
    &&= \int_0^\infty \frac{1}{t}  \frac{1}{4\sqrt{\pi t}} \ee^{-t/4} \ee^{-L^2/4t} \frac{L}{m \sinh\frac{L}{2}} \,\dd t \nonumber\\
    &&= \frac{L}{m \sinh\frac{L}{2}} \frac{1}{4L} \ee^{-L/2} \left( -{\rm Erf}\left(\frac{L-t}{2\sqrt{t}}\right) - \ee^L {\rm Erf}\left(\frac{L+t}{2\sqrt{t}}\right) \right)\bigg|_{t=0}^\infty \nonumber\\
    &&= \frac{1}{m}\ \frac{1}{\ee^L -1}
\end{eqnarray}
where the error function ${\rm Erf}(x)=\frac{2}{\sqrt{\pi}}\int_0^x \ee^{-t^2} \,\dd t$, and we use the fact that
\begin{equation}
    \partial_t \left({\rm Erf}\left(\frac{L\pm t}{2\sqrt{t}}\right)\right) = \frac{-L\pm t}{2\sqrt{\pi}\ t^{3/2}} \ee^{-t/4} \ee^{-L^2/4t} \ee^{\mp L/2}.
\end{equation}
The proof completed.
\end{proof}

\subsection{On flat tori}
We can compute similarly the mass of Brownian loop measure in each homotopy class on a torus using the heat kernel $p_\C(z,w;t):\C\times\C\times(0,\infty)\to\R$ on the complex plane $\C$ endowed with the Euclidean metric: 
\begin{equation*}
    p_\C(z,w;t)= \frac{1}{4\pi t} \ee^{-|z-w|^2/4t}.
\end{equation*}

\begin{lem}\label{lem:blm-flat}
    Let $T=\C/\Lambda$ be a flat torus and $\Lambda\sbs\C$ be a discrete lattice. Let $\gamma$ be an oriented closed geodesic on $T$ which acts on $\C$ as $\gamma:z\mapsto z+\tau$. Then 
    \begin{equation*}
   \mu_T (\mc C_T (\g)) =  \int_0^\infty \frac{1}{t} \iint_{F} p_\C(z,\gamma\cdot z;t) \,\dd x \, \dd y \,\dd t 
    = \frac{{\rm Area}(T)}{\pi |\tau|^2}
\end{equation*}
where $F\sbs\C$ is a fundamental domain of $T$ and $z=x+ \ii y$.
\end{lem}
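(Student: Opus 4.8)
The plan is to follow the same route as the proof of Lemma~\ref{lem:blm-hyp}, replacing the universal cover $\H$ by $\C$ and the Fuchsian group $\Gamma$ by the lattice $\Lambda = \pi_1(T)$; the computation becomes much shorter because $\Lambda$ is abelian. First I would record the exact analogue of Lemma~\ref{lem:loop_heat}: for every $x \in T$, every $t>0$, and any lift $z \in \C$ of $x$,
\[
  \m W^t_{x\to x}(T)(\mc C_T(\g)) = \sum_{h \in [\g]} \abs{\m W^t_{z\to h\cdot z}(\C)},
\]
proved exactly as there — Brownian motion on $T$ is local, hence lifts to Brownian motion on $\C$, and a lifted path projects to a loop in $\mc C_T(\g)$ if and only if its two endpoints differ by an element of the conjugacy class $[\g]\subset\Lambda$; one then disintegrates $\m W^t_z(\C)$ over the endpoint. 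Since $\Lambda$ is abelian, $[\g]=\{\g\}$ collapses to the single element $z\mapsto z+\tau$, so by Remark~\ref{rem:q_var}
\[
  \m W^t_{x\to x}(T)(\mc C_T(\g)) = \abs{\m W^t_{z\to z+\tau}(\C)} = p_\C(z, z+\tau; t) = \frac{1}{4\pi t}\,\ee^{-|\tau|^2/4t},
\]
which notably does not depend on $x$. Together with the definition of $\mu_T$ this already yields the intermediate identity $\mu_T(\mc C_T(\g)) = \int_0^\infty \frac1t\iint_F p_\C(z,\g\cdot z;t)\,\dd x\,\dd y\,\dd t$ stated in the lemma, since integrating over $T$ is integrating over a fundamental domain $F$.

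Next I would evaluate this integral. Because the integrand $p_\C(z,\g\cdot z;t)=\tfrac{1}{4\pi t}\ee^{-|\tau|^2/4t}$ is $z$-independent, the spatial integral just multiplies it by ${\rm Area}(T)$, leaving
\[
  \mu_T(\mc C_T(\g)) = \frac{{\rm Area}(T)}{4\pi}\int_0^\infty \frac{1}{t^2}\,\ee^{-|\tau|^2/4t}\,\dd t = \frac{{\rm Area}(T)}{\pi|\tau|^2},
\]
where the last equality is the substitution $s = |\tau|^2/(4t)$. Convergence (hence the use of Fubini) is immediate: the integrand decays like $t^{-2}$ as $t\to\infty$, and since $\g$ is a closed geodesic we have $\tau\neq 0$, so $\ee^{-|\tau|^2/4t}$ forces rapid decay as $t\to 0^+$.

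I do not expect a genuine obstacle here; the two points to watch are the following. First, one must make sure the conjugacy-class sum really does collapse to a single term and that one therefore integrates over a \emph{full} fundamental domain of $T$ — in contrast with Lemma~\ref{lem:blm-hyp}, where the centralizer of the geodesic's primitive root is a proper cyclic subgroup and integrating over its (smaller) fundamental domain is precisely what produces the factor $1/m(\g)$; here the centralizer of $\g$ in $\Lambda$ is all of $\Lambda$, so no iteration-number factor appears (consistent with $\ell_{\g^m}(T) = m\,\ell_\g(T)$). Second, one should note the claimed invariance under change of flat metric, which is immediate: rescaling the flat metric by $\lambda$ multiplies both ${\rm Area}(T)$ and $|\tau|^2 = \ell_\g(T)^2$ by $\lambda^2$, so ${\rm Area}(T)/(\pi|\tau|^2)$ is unchanged — as it must be, by the conformal invariance of $\mu_T$ from Theorem~\ref{thm:CI}.
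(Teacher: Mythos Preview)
Your proposal is correct and follows essentially the same approach as the paper. The paper's proof is terser---it only computes the second equality, taking the first (that $\mu_T(\mc C_T(\g))$ equals the heat-kernel integral) as implicit from the analogue of Lemma~\ref{lem:loop_heat}---whereas you spell out that lifting step and the collapse of the conjugacy class; the evaluation of the integral via $z$-independence and the substitution $s=|\tau|^2/(4t)$ is identical in both.
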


\begin{proof}
    The value $p_\C(z,\gamma\cdot z;t)= \frac{1}{4\pi t} \ee^{-|\tau|^2/4t}$ is independent of $z\in F$. So 
\begin{eqnarray*}
    \int_0^\infty \frac{1}{t} \iint_{F} p_\C(z,\gamma\cdot z;t)  \,\dd x \, \dd y \,\dd t
    &=& {\rm Area}(T) \int_0^\infty \frac{1}{t} \frac{1}{4\pi t} \ee^{-|\tau|^2/4t}\, \dd t \\
    &=& \frac{{\rm Area}(T)}{\pi |\tau|^2}
\end{eqnarray*}
which completes the proof.
\end{proof}

\section{Consequences}

\subsection{Puncturing the surface}

Let $X$ be a complete hyperbolic surface without boundary (may have cusps or funnels and may be of infinite type). Let $P_I = \{p_k,\, k\in I\} \subset X$ be  a non-empty, closed, and polar (namely, in all local charts $(U \subset X, \varphi_U: U \to \m C)$, $\varphi_U(P_I \cap U)$ has zero logarithmic capacity, which is equivalent to $P_I$ not being hit by a Brownian motion almost surely, see \cite[Thm. 8.20]{morters2010brownian}) subset. For instance, we may take $P_I$ to be a closed subset of $X$ which consists of countably or finitely many distinct points. Let $X'=X\setminus P_I$ be the Riemann surface with complex structure induced from $X$. Then $X'$ admits a unique complete hyperbolic metric compatible with the complex structure. 

\begin{cor}\label{cor:puncture_CI}
We have
    $\mu_X = \mu_{X'}$. 
\end{cor}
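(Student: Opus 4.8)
The plan is to deduce $\mu_X = \mu_{X'}$ directly from the conformal invariance of the Brownian loop measure (Theorem~\ref{thm:CI}) combined with the fact that $P_I$ is polar. First I would observe that $X$ and $X'$ carry \emph{the same} conformal structure: $X'$ is, by definition, $X$ equipped with the complex structure induced from $X$, so as Riemann surfaces we are comparing the Brownian loop measure computed with respect to the complete hyperbolic metric $g$ on $X$ against the one computed with respect to the complete hyperbolic metric $g'$ on $X' = X \setminus P_I$ (which differs from $g$, being complete on the punctured surface). The point is that $g$ and $g'$ are conformal to one another on $X'$, but only on the open subset $X' = X \setminus P_I$, so the naive application of the conformal invariance statement as written (which needs a global conformal factor $\sigma \in C^\infty(X, \mathbb R)$) does not immediately apply across $P_I$.

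The key step is therefore a polarity argument at the level of the defining construction. I would argue that the loop measure $\mu_{X'}$, as a measure on loops in $X' \subset X$, is by the restriction property equal to $\1_{\eta \subset X'}\,\dd\mu_{X'}$ — but here I want to compare it with $\mu_X$ rather than restrict $\mu_X$ further. The cleanest route: use conformal invariance of $\mu_{X'}$ with respect to metrics \emph{on $X'$}, so $\mu_{X'}$ computed with $g'$ equals $\mu_{X'}$ computed with $g|_{X'}$; then it remains to show that $\mu_{X'}$ computed with $g|_{X'}$ equals $\mu_X$ computed with $g$. This last identity is precisely the statement that almost no Brownian loop (under $\mu_X$) hits the polar set $P_I$, so that killing the Brownian motion upon hitting $P_I$ (which is how $\mu_{X'}$ with metric $g|_{X'}$ is built, since $X' = X \setminus P_I$ has $P_I$ as a boundary-at-infinity / removed set) changes nothing. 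Concretely, from the rooted representation $\mu_X^* = \int_0^\infty \frac{\dd t}{t}\int_X \mathbb W^t_{x\to x}(X)\,\dd\vol_g(x)$, a Brownian bridge from $x$ to $x$ of time length $t$ is a time-change of a Brownian motion, and since $P_I$ is polar it is almost surely avoided by the Brownian motion on $X$ on any finite time interval; hence $\mathbb W^t_{x\to x}(X)$-a.e. loop lies in $X' = X \setminus P_I$ for $\vol_g$-a.e. $x$, which gives $\mu_X(\{\eta : \eta \cap P_I \neq \emptyset\}) = 0$ and therefore $\mu_X = \1_{\eta \subset X'}\,\dd\mu_X = \mu_{X'}^{g|_{X'}} = \mu_{X'}$.

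In writing this up I would (i) invoke polarity in the form ``$P_I$ is not hit by Brownian motion on $X$ almost surely'' — which is exactly the hypothesis, noting it is a local and conformally invariant property so it can be checked in charts as stated in the setup — (ii) transfer this from the Brownian motion to the parametrized loop measure $\mu_X^*$ via Fubini in the $\int_0^\infty \frac{\dd t}{t}\int_X \cdots$ representation, using that the bridge $\mathbb W^t_{x\to x}$ is absolutely continuous (locally in time) with respect to the law of the unconditioned Brownian motion, so that a.s.-avoidance passes to the bridges for a.e.\ endpoint, and (iii) conclude $\mu_X(\eta \cap P_I \neq \emptyset) = 0$, hence by the restriction property $\mu_X = \mu_{X \setminus P_I}$ as measures on loops, where $X \setminus P_I$ carries the metric $g$ restricted; then one last appeal to conformal invariance on the open surface $X'$ replaces $g|_{X'}$ by the complete hyperbolic metric $g'$, giving $\mu_X = \mu_{X'}$.

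The main obstacle I anticipate is step (ii): making rigorous that polarity of $P_I$ for the Brownian motion implies polarity for the Brownian \emph{bridge} (and that this survives the $\dd t/t$ and $\dd\vol_g(x)$ integrations), i.e.\ that $\mu_X^*$-a.e.\ rooted loop avoids $P_I$. This is a standard but slightly delicate absolute-continuity argument — the bridge over $[0,t]$ is absolutely continuous w.r.t.\ the free Brownian motion when restricted to $[0,t-\delta]$, and one lets $\delta \to 0$ while using that the endpoint $x$ itself lies outside the closed polar set $P_I$ for $\vol_g$-a.e.\ $x$ — and I would either cite the corresponding statement from \cite{ang2020brownian} or \cite{LW2004loopsoup} or reproduce the short argument. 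Everything else (conformal invariance, restriction property, existence and uniqueness of the complete hyperbolic metric on $X'$) is quoted from earlier in the paper.
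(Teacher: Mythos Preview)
Your proposal is correct and uses the same ingredients as the paper's proof: polarity of $P_I$ to obtain $\mu_X(\{\eta : \eta \cap P_I \neq \emptyset\}) = 0$, then restriction and conformal invariance to identify $\mu_X$ with $\mu_{X'}$. The one difference is in how conformal invariance is invoked. You apply Theorem~\ref{thm:CI} directly on the open surface $X'$, comparing $g|_{X'}$ with the complete hyperbolic metric $g'$; this is legitimate since $\sigma \in C^\infty(X',\m R)$ (each loop has compact image in $X'$, so the time-change in the proof of Theorem~\ref{thm:CI} stays finite), but it requires a moment's thought about what $\mu_{X',\,g|_{X'}}$ means when $(X',g|_{X'})$ is incomplete. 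The paper instead exhausts $X'$ by $X_\varepsilon = X \setminus \bigcup_k B(p_k,\varepsilon)$, applies restriction and conformal invariance on each $X_\varepsilon$ (which has genuine nonpolar boundary, so everything is unambiguous), and then lets $\varepsilon \to 0$ via monotone convergence together with the already-established $\mu_X(\{\eta \cap P_I \neq \emptyset\}) = 0$. The exhaustion buys a cleaner technical setup at the cost of one extra limit; your direct route is shorter but leans slightly harder on Theorem~\ref{thm:CI}. For your step (ii), the paper simply cites the fact that paths under $\m W_{x\to x}(X)$ avoid polar sets (together with $\vol_g(P_I)=0$ for the starting point) rather than spelling out the absolute-continuity-of-bridges argument you sketch; either is fine.
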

\begin{proof}
We note that since $X'$ has additional punctures, we cannot apply Theorem~\ref{thm:CI} directly 
 as the conformal factor $\s$ between the metrics on $X$ and $X'$  is singular at $P_I$.
However, for all $x \notin P_I$, the measure of paths under $\m W_{x\to x} (X)$  passing through any point of $P_I$ is zero. See, e.g., \cite[Cor.~2.26]{morters2010brownian}.
On the other hand, a countable set in $X$ has zero volume, as does the total mass of loops starting from a point in $P_I$.
By Fubini theorem, we have
\begin{equation}\label{eq:polar}
\mu_X \{\eta \,|\, \eta \cap P_I \neq \emptyset \} = 0.
\end{equation}

Since $P_I$ is closed, we have 
\begin{equation}\label{eq:intersect}
    \cap_{\vare > 0} \left(\cup_{k \in I} B(p_k, \vare) \right) = P_I,
\end{equation}
where  
$B(p, r)$ is the ball of radius $r$ centered at $p$ in $X$. 
For $\vare$ small enough, $X_\vare: = X \setminus \cup_{k} B(p_k, \vare)$ is non-empty. 
 Theorem~\ref{thm:CI} shows  that
$$\mu_{X} \1_{\eta \subset X_\vare} = \mu_{X_\vare, g} =  \mu_{X_\vare, g'} = \mu_{X'}  \1_{\eta \subset X_\vare}.$$
Letting $\vare \to 0\splus$, \eqref{eq:polar}, \eqref{eq:intersect}, and monotone convergence theorem show that the first term above converges to $\mu_{X}$. The last term converges to $\mu_{X'}$ by \eqref{eq:intersect}. This completes the proof.
\end{proof}

From this, we obtain the following identity: 
\begin{thm}\label{thm-id-hyp}
    Let $\gamma$ be a closed geodesic on hyperbolic surface $X$. Then 
    \begin{equation*}
        \frac{1}{m(\gamma)} \frac{1}{\ee^{\ell_\gamma(X)}-1} = \sum_{\gamma' \simeq_X \gamma} \frac{1}{m(\gamma')} \frac{1}{\ee^{\ell_{\gamma'}(X')}-1}
    \end{equation*}
    where the summation is taken over all closed geodesics $\gamma'$ on $X'$ that are homotopic to $\gamma$ as curves on $X$, $m(\gamma)$ is the iteration number on $X$ and $m(\gamma')$ is the iteration number on $X'$. 
    In particular,  we have
 \begin{equation*}
     \frac{1}{\ee^{\ell_\gamma(X)}-1} = \sum_{\gamma' \simeq_X \gamma} \frac{1}{\ee^{\ell_{\gamma'}(X')}-1}
    \end{equation*}
    when $\gamma$ is a primitive closed geodesic in $X$.
\end{thm}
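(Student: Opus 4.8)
The plan is to deduce Theorem~\ref{thm-id-hyp} directly from Lemma~\ref{lem:blm-hyp} (i.e. Lemma~\ref{lem:intro_hyp}) together with Corollary~\ref{cor:puncture_CI}. The key observation is that, by conformal invariance of the Brownian loop measure and the fact that $P_I$ is polar, the total mass of Brownian loops on $X$ in a given free homotopy class is unchanged when we puncture $X$ at the points of $P_I$; the only subtlety is that a single homotopy class on $X$ breaks up into countably many homotopy classes on $X'$.

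\begin{proof}
Fix a closed geodesic $\gamma$ on $X$ and let $\mc C_X(\gamma)$ be its free homotopy class of loops in $X$. On $X' = X \setminus P_I$, every loop $\eta \in \mc C_X(\gamma)$ that avoids $P_I$ belongs to a well-defined free homotopy class of loops in $X'$, and two such loops lie in the same class in $X'$ if and only if one can be deformed to the other within $X'$. Since $\pi_1(X')$ surjects onto $\pi_1(X)$ with kernel generated by loops around the punctures, the set $\mc C_X(\gamma) \cap \{\eta \,|\, \eta \cap P_I = \emptyset\}$ decomposes as a disjoint union
\begin{equation}\label{eq:homotopy_split}
    \mc C_X(\gamma) \cap \{\eta \subset X'\} = \bigsqcup_{\gamma' \simeq_X \gamma} \mc C_{X'}(\gamma'),
\end{equation}
where $\gamma'$ ranges over the (countably many) closed geodesics on $X'$ whose $X$-homotopy class equals that of $\gamma$, and $\mc C_{X'}(\gamma')$ is the free homotopy class of $\gamma'$ as loops in $X'$. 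Each such class $\mc C_{X'}(\gamma')$ is essential on $X'$ (a loop homotopic in $X$ to the nontrivial, non-cusp-homotopic geodesic $\gamma$ cannot be null-homotopic or cusp-homotopic in $X'$), so Lemma~\ref{lem:blm-hyp} applies on $X'$ and gives $\mu_{X'}(\mc C_{X'}(\gamma')) = \frac{1}{m(\gamma')}\frac{1}{\ee^{\ell_{\gamma'}(X')}-1}$.

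Now we compute $\mu_X(\mc C_X(\gamma))$ in two ways. On one hand, Lemma~\ref{lem:blm-hyp} on $X$ gives $\mu_X(\mc C_X(\gamma)) = \frac{1}{m(\gamma)}\frac{1}{\ee^{\ell_\gamma(X)}-1}$. On the other hand, by Corollary~\ref{cor:puncture_CI} we have $\mu_X = \mu_{X'}$ as measures on loops, and by \eqref{eq:polar} the set of loops meeting $P_I$ is $\mu_X$-null, so
\begin{equation*}
    \mu_X(\mc C_X(\gamma)) = \mu_X\big(\mc C_X(\gamma) \cap \{\eta \subset X'\}\big) = \mu_{X'}\big(\mc C_X(\gamma) \cap \{\eta \subset X'\}\big) = \sum_{\gamma' \simeq_X \gamma} \mu_{X'}(\mc C_{X'}(\gamma')),
\end{equation*}
using \eqref{eq:homotopy_split} and countable additivity in the last step. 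Combining the two evaluations yields
\begin{equation*}
    \frac{1}{m(\gamma)}\frac{1}{\ee^{\ell_\gamma(X)}-1} = \sum_{\gamma' \simeq_X \gamma} \frac{1}{m(\gamma')}\frac{1}{\ee^{\ell_{\gamma'}(X')}-1}.
\end{equation*}
The special case for $\gamma$ primitive is immediate once we observe that if $\gamma$ is primitive on $X$ then $m(\gamma) = 1$ and, moreover, every $\gamma'$ with $\gamma' \simeq_X \gamma$ is primitive on $X'$: an iterate $\delta'^k$ on $X'$ projects to the iterate $\delta^k$ in $\pi_1(X)$, which can equal the primitive class $\gamma$ only if $k = 1$. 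Hence all iteration numbers on both sides equal $1$, giving the stated identity.
\end{proof}

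The main point requiring care is the homotopy-class bookkeeping of \eqref{eq:homotopy_split}: one must check that restricting an $X$-homotopy class to loops avoiding $P_I$ partitions it exactly into $X'$-homotopy classes indexed by the geodesics $\gamma' \simeq_X \gamma$, and that this indexing is countable so that countable additivity of $\mu_{X'}$ legitimately applies. Everything else — the two applications of Lemma~\ref{lem:blm-hyp} and the identification $\mu_X = \mu_{X'}$ — is already in hand from the earlier results.
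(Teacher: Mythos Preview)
Your proof is correct and follows essentially the same approach as the paper: compute $\mu_X(\mc C_X(\gamma))$ via Lemma~\ref{lem:blm-hyp}, invoke $\mu_X=\mu_{X'}$ from Corollary~\ref{cor:puncture_CI}, decompose the $X$-homotopy class into $X'$-homotopy classes, and apply Lemma~\ref{lem:blm-hyp} again on $X'$. You are simply more explicit than the paper about the decomposition \eqref{eq:homotopy_split}, the essentialness of each $\mc C_{X'}(\gamma')$, and the primitivity argument (the paper just states that $m(\gamma')$ divides $m(\gamma)$), but the underlying argument is identical.
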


Similarly, let $T$ be a flat torus and $T'=T\setminus P_I$ where $P_I = \{p_k\}_{k\in I} \sbs T$ is a non-empty, closed, and polar subset. Again, $T'$ admits a unique complete hyperbolic metric compatible with the complex structure. Same as Corollary \ref{cor:puncture_CI}, we still have $\mu_T = \mu_{T'}$ and $\mu_T \{\eta \,|\, \eta \cap P_I \neq \emptyset \} = 0$.
\begin{thm}\label{thm-id-flat}
    Let $\gamma$ be a closed geodesic on a flat torus $T$. Then 
    \begin{equation*}
        \frac{{\rm Area}(T)}{\pi \ell_\gamma (T)^2} = \sum_{\gamma' \simeq_T \gamma} \frac{1}{m(\gamma')} \frac{1}{\ee^{\ell_{\gamma'}(T')}-1}.
    \end{equation*}
\end{thm}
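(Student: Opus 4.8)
The plan is to deduce Theorem~\ref{thm-id-flat} from the conformal invariance of the Brownian loop measure exactly as Theorem~\ref{thm-id-hyp} was deduced from Corollary~\ref{cor:puncture_CI}, only now using the flat computation of Lemma~\ref{lem:blm-flat} on the source side and the hyperbolic computation of Lemma~\ref{lem:blm-hyp} on the target side. First I would record, as noted in the text just before the statement, that the same argument as in Corollary~\ref{cor:puncture_CI} applies verbatim with $X$ replaced by the flat torus $T$: the conformal factor between the flat metric on $T$ and the complete hyperbolic metric on $T'=T\setminus P_I$ is smooth away from $P_I$, the set $P_I$ is polar so $\mu_T\{\eta\mid \eta\cap P_I\neq\emptyset\}=0$, and exhausting $T$ by $T_\varepsilon:=T\setminus\bigcup_k B(p_k,\varepsilon)$ together with monotone convergence gives $\mu_T=\mu_{T'}$.

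Next I would fix an oriented closed geodesic $\gamma$ on $T$, acting on $\C$ by $z\mapsto z+\tau$, and apply this equality of measures to the set $\mc C_T(\gamma)$ of loops on $T$ freely homotopic to $\gamma$. On the flat side, Lemma~\ref{lem:blm-flat} gives $\mu_T(\mc C_T(\gamma))=\operatorname{Area}(T)/(\pi|\tau|^2)=\operatorname{Area}(T)/(\pi\,\ell_\gamma(T)^2)$ since the Euclidean length of $\gamma$ is $|\tau|$. On the hyperbolic side, a loop on $T'$ is counted by $\mu_{T'}$ in $\mc C_T(\gamma)$ precisely when it is freely homotopic, as a curve on $T$, to $\gamma$; since $P_I$ is polar the loops passing through $P_I$ carry no mass, so $\mu_{T'}$-almost every such loop lies in $T'$ and its free homotopy class on $T'$ is one of the classes that map to the class of $\gamma$ under the inclusion $T'\hookrightarrow T$ on $\pi_1$. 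Each such class on $T'$ contains a unique hyperbolic geodesic $\gamma'$ (note every nontrivial class on $T'$ is essential: $T'$ has genus one, so no class is peripheral to a single added puncture unless... — in any case the classes mapping to $\gamma\neq 1$ are nontrivial and not cusp-homotopic). Partitioning $\mc C_T(\gamma)$ (up to the null set of loops hitting $P_I$) into the countably many free-homotopy classes $\mc C_{T'}(\gamma')$ with $\gamma'\simeq_T\gamma$, countable additivity of $\mu_{T'}$ and Lemma~\ref{lem:blm-hyp} give
\begin{equation*}
\mu_{T'}(\mc C_T(\gamma))=\sum_{\gamma'\simeq_T\gamma}\mu_{T'}(\mc C_{T'}(\gamma'))=\sum_{\gamma'\simeq_T\gamma}\frac{1}{m(\gamma')}\frac{1}{\ee^{\ell_{\gamma'}(T')}-1}.
\end{equation*}
Equating the two evaluations of $\mu_T(\mc C_T(\gamma))=\mu_{T'}(\mc C_T(\gamma))$ yields the identity.

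The one point that needs care — and the main (mild) obstacle — is the bookkeeping of free homotopy classes: I must check that the decomposition $\mc C_T(\gamma)=\bigsqcup_{\gamma'\simeq_T\gamma}\mc C_{T'}(\gamma')$ holds up to a $\mu_{T'}$-null set, i.e. that the preimage under $\pi_1(T')\to\pi_1(T)$ of the conjugacy class of $\gamma$ is exactly the index set of the sum, that these classes are disjoint, and that each is essential on $T'$ so that Lemma~\ref{lem:blm-hyp} applies with the stated form. This is the same combinatorial/topological input already used implicitly in Theorem~\ref{thm-id-hyp}, and it goes through identically here since it only uses that $T'$ is obtained from $T$ by removing a polar set and that a Brownian loop almost surely avoids $P_I$; the flat-versus-hyperbolic distinction between $T$ and $T'$ enters only through which explicit formula (Lemma~\ref{lem:blm-flat} versus Lemma~\ref{lem:blm-hyp}) we plug in on each side. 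I would also remark that, as in the hyperbolic case, the sum on the right is infinite but absolutely convergent, being a rearrangement of the $\mu_{T'}$-mass of the single set $\mc C_T(\gamma)$, which is finite by the flat computation.
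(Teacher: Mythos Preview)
Your proof is correct and follows essentially the same approach as the paper: both identify the left-hand side as $\mu_T(\mc C_T(\gamma))$ via Lemma~\ref{lem:blm-flat}, the right-hand side as $\sum_{\gamma'}\mu_{T'}(\mc C_{T'}(\gamma'))$ via Lemma~\ref{lem:blm-hyp}, and equate them using $\mu_T=\mu_{T'}$ from (the argument of) Corollary~\ref{cor:puncture_CI}. Your write-up is in fact more careful than the paper's on the topological bookkeeping --- in particular the observation that any $\gamma'\simeq_T\gamma$ is essential on $T'$ because a cusp-homotopic loop would become contractible on $T$ --- which is a genuine point the paper leaves implicit.
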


\begin{proof}[Proof of Theorem \ref{thm-id-hyp} and \ref{thm-id-flat}]

The left-hand sides of the two identities are the Brownian loop measure $\mu_X (\mc C_X (\g))$ of $\gamma$ by Lemmas~\ref{lem:blm-hyp} and \ref{lem:blm-flat}. And the right-hand sides are the sum of $\mu_{X'} (\mc C_{X'} (\g'))$ for all $\gamma'$ homotopic to $\gamma$ on $X$. Two sides are equal because $\mu_{X}=\mu_{X'}$ by Corollary \ref{cor:puncture_CI}. 

    It is clear that $m(\gamma)$ is divisible by $m(\gamma')$. In particular, when $\gamma$ is a primitive closed geodesic in $X$, any such $\gamma'$ in $X'$ is also primitive.  This proves the identity for a primitive geodesic.
\end{proof}





\subsection{On surfaces with boundary}
 As we mentioned at the beginning of Section~\ref{sec:blm_hom}, we may also use Lemma~\ref{lem:blm-hyp} to compute the Brownian loop measure on a surface $X$ with a non-empty boundary (where the Brownian loops are killed while hitting the boundary). 
 Let $X'=X\setminus\partial X$.
    Since $X'$ cannot be one of $\widehat\C, \C, \C\setminus\{0\}$ or flat torus, it admits a unique complete hyperbolic metric. Then, by Theorem \ref{thm:CI} and Lemma~\ref{lem:blm-hyp}, the Brownian loop measure can be computed on $X'$ with the complete hyperbolic metric:
    $$\mu_X (\mc C_X (\g)) = \mu_{X'} (\mc C_{X'} (\g)) = \frac{1}{m(\gamma)} \frac{1}{\ee^{\ell_\gamma(X')}-1}$$
    where $\ell_\gamma(X')$ is the length of the geodesic in $X'$ homotopic to $\gamma$. 
    
    We give some examples here.

\begin{cor}\label{cor:annulus}
    Let $A$ be an annulus conformally equivalent to $\m A_r : = \{z \in \m C \,|\, \ee^{-r}<|z|<1\}$ for some $r > 0$. The total mass of Brownian loops in $A$ with winding number $m$ (assume $m<0$ means clockwise) equals 
    \begin{equation} \label{eq:annulus_m}
       \frac{1}{|m|} \frac{1}{\ee^{2\pi^2|m|/r} - 1}. 
    \end{equation}
\end{cor}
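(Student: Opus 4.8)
The plan is to reduce Corollary~\ref{cor:annulus} to the hyperbolic computation in Lemma~\ref{lem:blm-hyp} via the remarks immediately preceding the statement. First I would note that by conformal invariance (Theorem~\ref{thm:CI}) it suffices to treat $A = \m A_r$ itself, and that by the restriction property together with $\mu_A = \mu_{A'}$ for $A' = A \setminus \partial A$, the total mass of Brownian loops in $\m C$ that stay inside $\m A_r$ with winding number $m$ is exactly $\mu_{\m A_r}(\mc C_{\m A_r}(\g^m))$, where $\g^m$ is the free homotopy class of loops winding $m$ times around the annulus. The key point is that $\m A_r \setminus \partial \m A_r = \{\ee^{-r} < |z| < 1\}$ carries a unique complete hyperbolic metric, and each nontrivial free homotopy class is essential (not homotopic to a cusp, since the ends are funnels), so Lemma~\ref{lem:blm-hyp} applies and gives $\mu_{\m A_r}(\mc C_{\m A_r}(\g^m)) = \frac{1}{m(\g^m)} \frac{1}{\ee^{\ell(\g^m)} - 1}$.

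Next I would carry out the elementary hyperbolic geometry of the annulus. The hyperbolic metric on $\{\ee^{-r} < |z| < 1\}$ is obtained by uniformizing: the universal cover is $\m H$, and $\m A_r$ is the quotient of $\m H$ by a hyperbolic cyclic group whose generator has translation length equal to the length of the unique simple closed geodesic. A standard way to see the length is to map $\{\ee^{-r} < |z| < 1\}$ conformally to a straight cylinder: writing $z = \ee^{w}$ with $w = u + \ii v$ sends the annulus to the strip $\{-r < u < 0\}$ (with $v$ periodic mod $2\pi$), which is conformally a cylinder of circumference $2\pi$ and height $r$; the modulus of the annulus is $r/(2\pi)$. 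The hyperbolic metric on a flat cylinder of modulus $\tau = r/(2\pi)$ has core geodesic of length $\pi/\tau = 2\pi^2/r$; equivalently, uniformizing the strip $\{0 < \Im \zeta < \pi\}$ (hyperbolic metric $|d\zeta|/\Im\zeta$ after the map to $\m H$, $\zeta \mapsto \ee^{\zeta}$... ) one computes directly that the closed geodesic in the strip of width $r$ has hyperbolic length $2\pi^2/r$. So the primitive geodesic has length $2\pi^2/r$, its $m$-fold iterate $\g^m$ has $m(\g^m) = |m|$ and $\ell(\g^m) = 2\pi^2|m|/r$. Substituting into Lemma~\ref{lem:blm-hyp} yields the claimed value $\frac{1}{|m|}\frac{1}{\ee^{2\pi^2|m|/r} - 1}$, and the sign convention for negative $m$ (clockwise winding) just corresponds to the orientation-reversed geodesic, which has the same length.

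The one slightly delicate point — and the main thing to be careful about rather than a genuine obstacle — is the identification of loops "with winding number $m$" on $\m A_r$ with a single free homotopy class $\mc C_{\m A_r}(\g^m)$: on an annulus, $\pi_1 \cong \m Z$ and free homotopy classes of closed curves are the conjugacy classes, which (the group being abelian) are in bijection with $\m Z$ itself, so "winding number $m$" is indeed exactly one class for each $m \neq 0$, and $m = 0$ is the contractible class (infinite mass, excluded). I would also remark that the computation of the core geodesic length $2\pi^2/r$ is classical — it is, e.g., in Buser's book — so I would cite it rather than reprove it, and simply record that $\m A_r$ has core geodesic of hyperbolic length $2\pi^2/r$ in the counterclockwise direction, which is the circle $|z| = \ee^{-r/2}$ by symmetry. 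With these identifications in place the corollary is an immediate specialization of Lemma~\ref{lem:blm-hyp}, so no substantial new work is needed beyond assembling conformal invariance, the restriction property, and the modulus-to-geodesic-length formula.
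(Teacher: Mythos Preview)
Your proposal is correct and follows essentially the same approach as the paper: reduce to $A=\m A_r$ by conformal invariance, identify the winding-number-$m$ class with $\mc C_{\m A_r}(\g^m)$, compute the core hyperbolic geodesic length as $2\pi^2/r$, and apply Lemma~\ref{lem:blm-hyp}. The only cosmetic difference is that the paper obtains the length $2\pi^2/r$ by writing $\m A_r$ directly as $\langle\sigma\rangle\backslash\m H$ with $\sigma:z\mapsto \ee^{2\pi^2/r}z$ (so the translation length is immediate), whereas you compute it via the modulus; these are the same computation phrased two ways.
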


\begin{proof} 
\begin{figure}[ht]
  \centering 
  \includegraphics[width=0.9\textwidth]{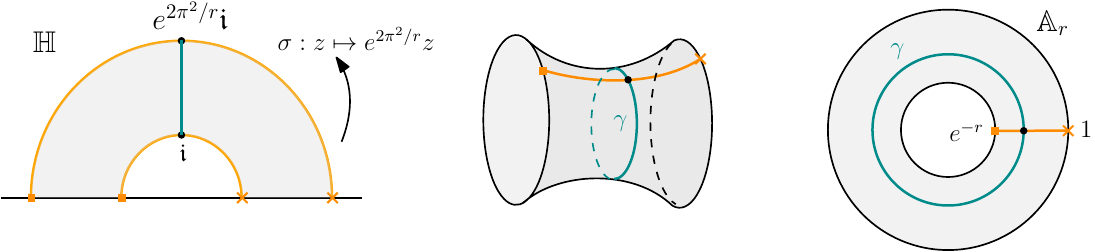}
    \caption{Illustration of the annulus $\m A_r = \brac{\sigma}\backslash\H$ with the complete hyperbolic metric induced by $\m H$. The blue curve is the unique simple closed geodesic in $\mathbb A_r$.}\label{fig:annulus}
\end{figure}
    Without loss of generality, we assume $A = \m A_r$, which is conformally equivalent to the annulus $\brac{\sigma}\backslash\H$ where $\sigma: z\mapsto \ee^{2\pi^2/r} z$. The complete hyperbolic metric on $\m A_r$ is that induced by the hyperbolic metric on $\m H$. See Figure~\ref{fig:annulus}. 
    The unique simple closed geodesic in $\brac{\sigma}\backslash\H$ (which has winding number $1$) has hyperbolic length 
    $ \ell = 2\pi^2/r.$ Then we obtain the result from Lemma~\ref{lem:blm-hyp}.
\end{proof}

\begin{remark}\label{rem:lawler}
    It was computed in \cite[Prop.\,3.9]{lawler2011defining} the total mass of non-contractible Brownian loop in the annulus $\m A_r$, which gives
    \begin{equation}\label{eq:lawler}
         \frac{r}{6} - 2\int_0^r \delta(s)\dd s  =  \sum_{k=1}^\infty \frac{1}{k} \left(\coth(k\pi^2 /r)-1\right) = 2 \sum_{k=1}^\infty \frac{1}{k} \frac{1}{\ee^{2k\pi^2/r} - 1}
             \end{equation}
    where $\delta(s)=\frac{1}{12} - \frac{\pi^2}{2s^2}\sum_{k=1}^\infty \left( \sinh(k\pi^2/s)\right)^{-2}$.
 Our result is consistent with this formula after summing \eqref{eq:annulus_m} over $m\in\Z\setminus\{0\}$.
\end{remark}

\begin{cor}\label{cor:touching_annulus}
    Let $K \subset \m D$ be a closed subset such that $D = \m D \setminus K$ is simply connected and contains $0$. Let $\psi : D \to \m D$ be a conformal map fixing $0$. The total mass of Brownian loop in $\m D$ intersecting $K$ with winding number $m$ around $0$ equals $ \log |\psi'(0)|/(2\pi^2 m^2)$.
\end{cor}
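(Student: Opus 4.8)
The plan is to regularize the (infinite) family of loops winding around $0$ by deleting a small disk, reduce everything to annulus masses via Corollary~\ref{cor:annulus}, and read off the finite answer from the asymptotics of one conformal modulus. Write $\mathcal L_m$ for the event that a loop has winding number $m$ around $0$ (well defined $\mu$-a.e.\ for any loop measure not charging loops through $0$) and $\mathcal L_m^K := \mathcal L_m \cap \{\eta : \eta \cap K \ne \emptyset\}$; the goal is $\mu_{\m D}(\mathcal L_m^K) = \log|\psi'(0)|/(2\pi^2 m^2)$. Since $\{0\}$ is polar, $\mu_{\m D}$-a.e.\ loop avoids $0$, but $\mu_{\m D}(\mathcal L_m) = +\infty$, so I cannot subtract directly. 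Instead, for $\delta > 0$ small enough that $\overline{B(0,\delta)} \subset D$, put $X_\delta := \m D \setminus \overline{B(0,\delta)}$ and $D_\delta := D \setminus \overline{B(0,\delta)}$. Every loop contained in $X_\delta$ lies in $D_\delta$ exactly when it avoids $K$, so the restriction property (Theorem~\ref{thm:CI}) gives the finite identity
\[
\mu_{X_\delta}(\mathcal L_m^K) = \mu_{X_\delta}(\mathcal L_m) - \mu_{D_\delta}(\mathcal L_m),
\]
and by monotone convergence (a loop avoiding $0$ lies in $X_\delta$ for $\delta$ small) the left side increases to $\mu_{\m D}(\mathcal L_m^K)$ as $\delta \to 0^+$. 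So it remains to evaluate $\lim_{\delta \to 0^+}\big(\mu_{X_\delta}(\mathcal L_m) - \mu_{D_\delta}(\mathcal L_m)\big)$.

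The first term is immediate from Corollary~\ref{cor:annulus}, since $X_\delta = \m A_{\log(1/\delta)}$: it equals $\frac{1}{|m|}\big(e^{2\pi^2|m|/\log(1/\delta)} - 1\big)^{-1}$. For the second, $D_\delta$ is a ring domain (the simply connected $D$ with a closed coordinate disk removed from its interior), hence conformally equivalent to $\m A_{r(\delta)}$, where $r(\delta)$ denotes its modulus in the normalization making $\m A_r$ have modulus $r$; a loop winding $m$ times around $0$ in $D_\delta$ is carried by a conformal equivalence to one winding $m$ times around the core of $\m A_{r(\delta)}$, so Corollary~\ref{cor:annulus} and conformal invariance give $\mu_{D_\delta}(\mathcal L_m) = \frac{1}{|m|}\big(e^{2\pi^2|m|/r(\delta)} - 1\big)^{-1}$ (the orientation of this identification is irrelevant, the mass depending only on $|m|$).

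The crux is the modulus asymptotics $r(\delta) = \log(1/\delta) - \log|\psi'(0)| + o(1)$ as $\delta \to 0^+$. Via $\psi$, the domain $D_\delta$ is conformally $\m D \setminus \psi(\overline{B(0,\delta)})$, so $r(\delta)$ is the modulus of this ring domain. As $\psi$ is holomorphic at $0$ with $\psi(0) = 0$, we have $\psi(z) = \psi'(0) z + O(z^2)$, and therefore, for every $\eps > 0$ and all small $\delta$,
\[
\overline{B\big(0, (|\psi'(0)| - \eps)\delta\big)} \ \subset\ \psi\big(\overline{B(0,\delta)}\big) \ \subset\ \overline{B\big(0, (|\psi'(0)| + \eps)\delta\big)}.
\]
Using that the modulus of $\m D \setminus E$ (with $E$ a continuum containing $0$, bounded away from $\partial\m D$) is decreasing in $E$ and equals $\log(1/t)$ when $E = \overline{B(0,t)}$, this sandwiches $r(\delta)$ between $\log\frac{1}{(|\psi'(0)| + \eps)\delta}$ and $\log\frac{1}{(|\psi'(0)| - \eps)\delta}$; letting $\delta \to 0^+$ and then $\eps \to 0^+$ gives the claim. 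I expect this to be the only step that is not pure bookkeeping, and it is a standard conformal-modulus (distortion) estimate.

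To finish, one expands $(e^x - 1)^{-1} = \tfrac1x - \tfrac12 + O(x)$ with $x = 2\pi^2|m|/\log(1/\delta) \to 0$ and with $x = 2\pi^2|m|/r(\delta) \to 0$; the constant terms $-\tfrac12$ cancel in the difference and
\[
\mu_{X_\delta}(\mathcal L_m) - \mu_{D_\delta}(\mathcal L_m) = \frac{\log(1/\delta) - r(\delta)}{2\pi^2 m^2} + o(1) \ \longrightarrow\ \frac{\log|\psi'(0)|}{2\pi^2 m^2}
\]
as $\delta \to 0^+$, which is the asserted value.
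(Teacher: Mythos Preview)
Your proof is correct and follows essentially the same approach as the paper: under the substitution $r = \log(1/\delta)$, your $X_\delta$ and $D_\delta$ are exactly the paper's $\m A_r$ and $\m A_r \setminus K$, and your modulus asymptotics $r(\delta) = \log(1/\delta) - \log|\psi'(0)| + o(1)$ is precisely the paper's $\lim_{r\to\infty}(r - r') = \log|\psi'(0)|$, proved by the same distortion-and-monotonicity argument. Your write-up is slightly more explicit about the monotone-convergence step justifying the limit $\mu_{X_\delta}(\mc L_m^K) \uparrow \mu_{\m D}(\mc L_m^K)$, but otherwise the two arguments coincide.
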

\begin{proof}
    Let $m \in \m Z\setminus \{0\}$, we have
    \begin{eqnarray*}
        && \mu_{\m D} (\{\text{loops with winding } m \text{ intersecting } K\}) 
         \\ &&= 
    \lim_{r\to \infty} \mu_{\m A_r} (\{\text{loops with winding } m\})  -\mu_{\m A_r \setminus K} (\{\text{loops with winding } m\}). 
    \end{eqnarray*}
    For $r$ large enough, $\m A_r \setminus K$ is doubly connected, and there exists $r'$ depending on $r$ such that $\m A_r \setminus K$ is conformally equivalent to $\m A_{r'}$. 

 Since $\psi$ is conformal, for all $\vare > 0$, there exists $r_0$ large enough, such that for all $r > r_0$, the curve $\psi (\ee^{- r} \m S^1)$ is contained in the annulus $\{ z \in \m C \,|\,  |\psi' (0)|\ee^{- r - \vare} < |z|< |\psi' (0)|\ee^{- r + \vare}\}$. See Figure~\ref{fig:distortion} for an illustration. 
As $\m A_{r'}$ is conformally equivalent to the annulus bounded by $\m S^1$ and $\psi (\ee^{- r} \m S^1)$, we have from the monotonicity of the modulus of annuli for inclusion that $ - \log |\psi'(0)| + r - \vare < r' <  - \log |\psi'(0)| + r + \vare$. This implies $\lim_{r \to \infty} r - r' = \log |\psi'(0)|$. 

\begin{figure}[ht]
  \centering 
  \includegraphics[width=0.8\textwidth]{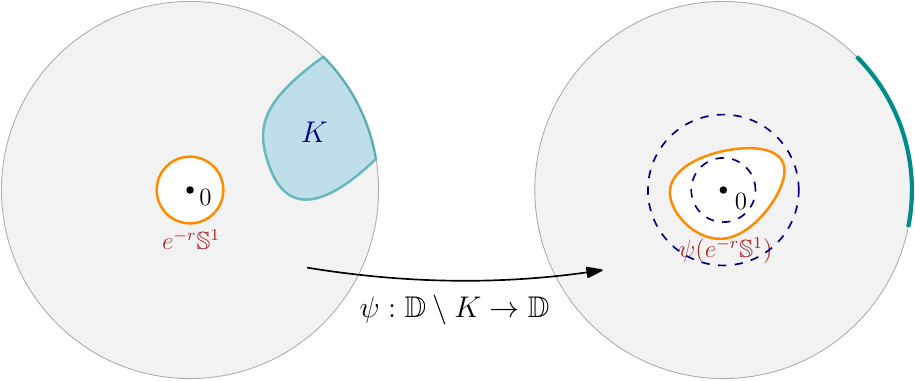}
    \caption{Illustration of the annuli in the proof of Corollary~\ref{cor:touching_annulus}.}\label{fig:distortion}
\end{figure}
    
    Lemma~\ref{lem:blm-hyp} implies that
    \begin{equation}\label{eq:an_asymptotics}
         \mu_{\m A_r} (\{\text{loops with winding } m\}) = \frac{1}{|m|} \frac{1}{\ee^{2|m|\pi^2/r} - 1} = \frac{r}{2\pi^2 m^2} - \frac{1}{2|m|} + O\left(\frac{1}{r}\right).
    \end{equation}
    From this, we obtain 
    $$ \mu_{\m D} (\{\text{loops with winding } m \text{ intersecting } K\} )  = \frac{\log|\psi'(0)|}{2\pi^2 m^2}$$
    as claimed.
\end{proof}

\begin{remark}\label{rem:annulus_total}
    Summing over $m \in \m Z\setminus \{0\}$, we obtain that the total mass of Brownian loops in $\m D$ intersecting $K$ with nonzero winding equals
    $$ 2 \sum_{m =1}^\infty \frac{\log |\psi'(0)|}{2\pi^2 m^2} = \frac{\log|\psi'(0)|}{6}$$
    which is consistent with \cite[Lemma~16]{JL18}.
\end{remark}

 Let $K \subset \m C \setminus \{0\}$ be a compact set separating $0$ and $\infty$. Let $\O$ and $\O^*$ be respectively the connected component of $\m C \setminus K$ containing $0$ and $\infty$. 
 Let $f$ be a conformal map $\m D \to \O$ fixing $0$ and $h$ a conformal map $\m D^* \to \O^*$ fixing $\infty$. The \emph{electrical thickness} \cite{kenyonconformal} of $K$ (or the Velling--Kirillov K\"ahler potential in the context of universal Teichm\"uller theory \cite{TT06}) is defined as $$\t(K) := \log |h'(\infty)| - 
 \log |f'(0)|.$$
 We have $\t (K) \ge 0$ thanks to the Grunsky inequality \cite[Thm.\,4.1, (21)]{Pom_uni}.

\begin{cor}[Electrical thickness]\label{cor:thick}
    Let $K \subset \m C \setminus \{0\}$ be a connected compact set separating $0$ and $\infty$. For all $m \in \m Z\setminus \{0\}$, we have
    \begin{equation}\label{eq:thick}
        \mu_{\Chat} (\{\text{loops with winding $m$ around } 0 \text{ intersecting } K\}) = \frac{\t(K)}{2 \pi^2 m^2} + \frac{1}{2|m|}. 
    \end{equation}
\end{cor}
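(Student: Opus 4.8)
The plan is to exhaust $\Chat$ by the round annuli $B_n := \{z \in \m C \,|\, \ee^{-n} < |z| < \ee^n\}$, to evaluate the mass inside each $B_n$ via the restriction property together with Corollary~\ref{cor:annulus}, and to let $n\to\infty$. Since a single point of $\Chat$ is polar for Brownian motion, $\mu_{\Chat}$-a.e.\ loop avoids $\{0,\infty\}$; such a loop has a well-defined finite winding number around $0$ and, being compact in $\Chat\setminus\{0,\infty\}$, lies in $B_n$ for all large $n$, as does $K$ (compact in $\m C\setminus\{0\}$). Hence, by the restriction property (Theorem~\ref{thm:CI}) and monotone convergence,
\begin{equation*}
 \mu_{\Chat}(\{\text{loops with winding }m\text{ around }0\text{ intersecting }K\}) \;=\; \lim_{n\to\infty}\mu_{B_n}(\{\text{loops with winding }m\text{ intersecting }K\}),
\end{equation*}
and I would compute the right-hand side through $\mu_{B_n}(\{\text{winding }m,\ \text{hit }K\}) = \mu_{B_n}(\{\text{winding }m\}) - \mu_{B_n}(\{\text{winding }m,\ \text{miss }K\})$, all three masses being finite.

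The first term is immediate: $B_n$ is conformally equivalent to $\m A_{2n}$ (via $z\mapsto\ee^{-n}z$), so by Corollary~\ref{cor:annulus} and \eqref{eq:an_asymptotics}, $\mu_{B_n}(\{\text{winding }m\}) = \tfrac{2n}{2\pi^2m^2}-\tfrac{1}{2|m|}+O(1/n)$. For the third term, $\mu_{B_n}(\{\text{winding }m,\ \text{miss }K\}) = \mu_{B_n\setminus K}(\{\text{winding }m\})$, and the key observation is that for $n$ large the only components of $B_n\setminus K$ that can carry a loop of nonzero winding around $0$ are
\begin{equation*}
 \Omega_n := \Omega\cap B_n = \Omega\setminus\{|z|\le\ee^{-n}\} \qquad\text{and}\qquad \Omega^*_n := \Omega^*\cap B_n = \big(\Omega^*\cup\{\infty\}\big)\setminus\big(\{|z|\ge\ee^n\}\cup\{\infty\}\big),
\end{equation*}
where one uses $\{|z|\le\ee^{-n}\}\subset\Omega$ and $\Omega\subset\{|z|<\ee^n\}$ for $n$ large. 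Indeed, $K$ being a connected compact separating $0$ and $\infty$, it separates the two boundary circles of $B_n$; a component $O$ of $B_n\setminus K$ different from $\Omega_n,\Omega^*_n$ meets neither boundary circle (near each circle $B_n\setminus K$ coincides with $\Omega$ or $\Omega^*$), so $\partial O\subset K$, hence $O$ is a bounded component of $\m C\setminus K$ with $\overline O\cap\{0,\infty\}=\emptyset$; since the complement of any component of $\Chat\setminus K$ is connected (here one uses that $K$ is connected and that the closure of every complementary component meets $K$), $O$ is simply connected in $\Chat$, so any loop in $O$ is null-homotopic in $\Chat\setminus\{0,\infty\}$ and thus has winding $0$. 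By the restriction property applied componentwise, $\mu_{B_n}(\{\text{winding }m,\ \text{miss }K\}) = \mu_{\Omega_n}(\{\text{winding }m\})+\mu_{\Omega^*_n}(\{\text{winding }m\})$.

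It then remains to identify the moduli of the doubly connected domains $\Omega_n$ and $\Omega^*_n$, which is where $f$ and $h$ enter. Applying the fixed conformal map $f^{-1}\colon\Omega\to\m D$, with $f^{-1}(w)=w/f'(0)+O(w^2)$ near $0$, the Koebe distortion estimate traps $f^{-1}(\{|w|\le\ee^{-n}\})$ between the round disks $\{|z|\le(1\mp o(1))\ee^{-n}/|f'(0)|\}$, so $\Omega_n\cong\m D\setminus f^{-1}(\{|w|\le\ee^{-n}\})$ has modulus $n+\log|f'(0)|+o(1)$. Likewise, applying $h^{-1}\colon\Omega^*\cup\{\infty\}\to\m D^*$, with $h^{-1}(w)=w/h'(\infty)+O(1)$ near $\infty$, gives that $\Omega^*_n$ has modulus $n-\log|h'(\infty)|+o(1)$. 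Feeding these into Corollary~\ref{cor:annulus} and \eqref{eq:an_asymptotics} (the mass of loops winding $m$ times around the core depends only on the modulus and $|m|$, so orientation is immaterial),
\begin{align*}
 \mu_{\Omega_n}(\{\text{winding }m\}) &= \frac{n+\log|f'(0)|}{2\pi^2m^2}-\frac{1}{2|m|}+o(1),\\
 \mu_{\Omega^*_n}(\{\text{winding }m\}) &= \frac{n-\log|h'(\infty)|}{2\pi^2m^2}-\frac{1}{2|m|}+o(1).
\end{align*}
Subtracting these from $\mu_{B_n}(\{\text{winding }m\})$, the $n$-linear terms and two of the $\tfrac{1}{2|m|}$ terms cancel, leaving
\begin{equation*}
 \mu_{B_n}(\{\text{winding }m,\ \text{hit }K\}) = \frac{\log|h'(\infty)|-\log|f'(0)|}{2\pi^2m^2}+\frac{1}{2|m|}+o(1) = \frac{\t(K)}{2\pi^2m^2}+\frac{1}{2|m|}+o(1),
\end{equation*}
and $n\to\infty$ yields \eqref{eq:thick}. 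The one step requiring genuine care is the topological claim isolating $\Omega_n,\Omega^*_n$ among the components of $B_n\setminus K$ and the simple connectivity of the remaining components in $\Chat$; the modulus asymptotics are routine distortion estimates for the fixed maps $f,h$, and the rest is the bookkeeping above.
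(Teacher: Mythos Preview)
Your proof is correct and follows the same overall strategy as the paper: exhaust $\Chat\setminus\{0,\infty\}$ by round annuli, compute the winding-$m$ mass inside each via Corollary~\ref{cor:annulus} and the asymptotics~\eqref{eq:an_asymptotics}, and pass to the limit. The bookkeeping differs slightly. The paper takes asymmetric annuli $\ee^R\m D\setminus\ee^{-r}\m D$ with a nested limit $r\to\infty$ then $R\to\infty$, and decomposes asymmetrically: loops hitting $K$ are written as loops hitting $K\cup\Omega^*$ minus loops contained in $\Omega^*\cap(\ee^R\m D)$, which lets it invoke Corollary~\ref{cor:touching_annulus} directly for the first piece. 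Your decomposition is symmetric in $\Omega$ and $\Omega^*$: you subtract off the winding loops that miss $K$, split them by connected component of $B_n\setminus K$, and use the topological fact (relying on the connectedness of $K$) that the components other than $\Omega_n,\Omega^*_n$ are simply connected in $\Chat$ and hence carry no loops of nonzero winding. Your route is self-contained (it does not go through Corollary~\ref{cor:touching_annulus}) and arguably cleaner; the paper's route avoids the component-by-component topological argument by absorbing $\Omega^*$ into the ``hit'' set. The modulus estimates for $\Omega_n,\Omega^*_n$ via distortion of the fixed maps $f,h$ are exactly what the paper does as well.
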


\begin{proof}
    We write $\iota : z\mapsto 1/z$ and let $\tilde h= \iota \circ h \circ \iota$ which maps $\m D$ conformally onto $\iota (\O^*)$ while fixing  $0$. Then we have $|h'(\infty)| = 1 / |\tilde h'(0)|$ and  $\t(K) = -\log |\tilde h'(0)| - 
 \log |f'(0)|$. 
The left-hand side of \eqref{eq:thick} equals 
\begin{align*}
&\lim_{R\to \infty} \lim_{r \to \infty} \Big( \mu_{\ee^{R} \m D \setminus \ee^{-r} \m D} (\{\text{loops with winding $m$ around } 0 \text{ intersecting } K\})  \\
 & = \lim_{R\to \infty} \lim_{r \to \infty} \Big( \mu_{\ee^{R} \m D \setminus \ee^{-r} \m D} (\{\text{loops with winding $m$ around } 0 \text{ intersecting } K \cup \O^*\})  \\
  & \hspace{70pt} - \mu_{(\ee^{R} \m D) \cap \O^*} (\{\text{loops with winding $m$}\})  \Big)\\
 & = \lim_{R\to \infty} \Big ( \lim_{r \to \infty}  \mu_{\m D \setminus \ee^{-r-R} \m D} (\{\text{loops with winding $m$ around } 0 \text{ intersecting } \ee^{-R} (K\cup \O^*)\}) \\
 &  \hspace{50pt} - \mu_{\iota (\O^*) \setminus (\ee^{-R} \m D) } (\{\text{loops with winding $m$}\}) \Big)\\
 & = \lim_{R\to \infty}  \frac{R - \log |f'(0)| }{2\pi^2 m^2} - \frac{R + \log |\tilde h'(0)| + o (1)}{2\pi^2 m^2} + \frac{1}{2|m|} + O\left(\frac{1}{R}\right) =  \frac{\t(K)}{2 \pi^2 m^2} + \frac{1}{2|m|}
\end{align*}
where in the second last equality, we used Corollary~\ref{cor:touching_annulus} for the first term; and \eqref{eq:an_asymptotics} and the fact that $\iota (\O^*) \setminus (\ee^{-R} \m D)$ is conformally equivalent to $\m A_{R + \log |\tilde h'(0)| + o(1)}$ for the second term by the same argument as in Corollary~\ref{cor:touching_annulus}.
\end{proof}

The following result also follows from Lemma~\ref{lem:blm-hyp}.

\begin{cor}\label{cor:boundary_total}
    Assume that $X$ is a Riemann surface of finite type (i.e., the Euler characteristic is finite) with infinite hyperbolic area. Then, the total mass of Brownian loops on $X$ that are in essential homotopy classes is finite.
\end{cor}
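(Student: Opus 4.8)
The strategy is to reduce the total mass of Brownian loops in essential classes to a sum over primitive closed geodesics, use Lemma~\ref{lem:blm-hyp} to rewrite each summand in terms of geodesic length, and then control the resulting series by the prime geodesic theorem. First I would recall that every essential free homotopy class contains a unique closed geodesic, and every closed geodesic $\g$ is the $m(\g)$-fold iterate of a primitive one $\g_0 \in \sP(X)$; moreover iterates of distinct primitive geodesics are distinct. Hence by Lemma~\ref{lem:blm-hyp} the total mass is
\begin{equation*}
    \sum_{\g \in \sG(X)} \mu_X(\mc C_X(\g)) = \sum_{\g_0 \in \sP(X)} \sum_{m=1}^\infty \frac{1}{m}\,\frac{1}{\ee^{m \ell(\g_0)} - 1}.
\end{equation*}
Since $\frac{1}{m(\ee^{mL}-1)} \le \frac{1}{m \ee^{mL}/2} = \frac{2}{m}\ee^{-mL}$ once $L$ is bounded below (and the systole $L_0 = \inf_{\g_0 \in \sP(X)} \ell(\g_0) > 0$ on a hyperbolic surface of finite type), the inner sum is bounded by $\sum_m \frac{2}{m}\ee^{-mL} = 2\log\frac{1}{1-\ee^{-L}} = O(\ee^{-L})$ for $L$ large; so it suffices to bound $\sum_{\g_0 \in \sP(X)} \ee^{-\ell(\g_0)}$, i.e.\ to show the geodesics do not accumulate too fast.

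The key input is the prime geodesic theorem: for $X$ of finite type (convex-cocompact or geometrically finite with infinite area), $N_X(L) = \#\{\g_0 \in \sP(X) : \ell(\g_0) \le L\}$ grows like $\ee^{\delta L}/(\delta L)$ where $\delta = \delta(X) \in (0,1)$ is the critical exponent of the Fuchsian group; crucially $\delta < 1$ precisely because $X$ has infinite area (the finite-area case gives $\delta = 1$). More crudely, I only need $N_X(L) \le C\,\ee^{\delta L}$ for some $\delta < 1$ and all $L \ge 0$, which follows from \cite{Gui86, Lal89, Bor16}. Then by a Stieltjes-integral / summation-by-parts estimate,
\begin{equation*}
    \sum_{\g_0 \in \sP(X)} \ee^{-\ell(\g_0)} = \int_0^\infty \ee^{-L}\, \dd N_X(L) = \int_0^\infty \ee^{-L} N_X(L)\,\dd L \cdot(1+o(1)) \le C \int_0^\infty \ee^{-(1-\delta)L}\,\dd L < \infty,
\end{equation*}
after splitting off the finitely many geodesics with $\ell(\g_0)$ below any fixed threshold (to which the crude bound $\frac{1}{m(\ee^{mL}-1)} \le \frac{1}{m(\ee^{mL_0}-1)}$ and convergence of $\sum_m \frac1m \frac{1}{\ee^{mL_0}-1}$ apply uniformly). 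Combining, the double sum converges.

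I would also record why the hypotheses are sharp, matching the discussion after Corollary~\ref{cor:boundary_total}: if $X$ had finite area then $\delta = 1$ and the bound degenerates (indeed the total mass is infinite, consistent with Theorem~\ref{thm-ren-blm}), and if $X$ is of infinite type one can have infinitely many geodesics of length $< 1$, making even the first inner sum diverge. The main obstacle is citing the prime geodesic theorem in the correct generality — it must cover geometrically finite surfaces with funnels (and possibly cusps), where the relevant statement is due to Guillopé, Lalley, and others as cited — and verifying that $\delta(X) < 1$ is equivalent to infinite area for finite-type $X$; once that is in hand the analytic estimate is routine. No delicate uniformity is needed beyond the single global systole bound, which exists for any finite-type hyperbolic surface.
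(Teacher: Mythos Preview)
Your proof is correct and follows essentially the same approach as the paper: express the total mass as the double sum $\sum_{\g_0 \in \sP(X)} \sum_{m \ge 1} \frac{1}{m(\ee^{m\ell(\g_0)}-1)}$ via Lemma~\ref{lem:blm-hyp}, invoke the prime geodesic theorem $N_X(L) \sim \ee^{\delta L}/(\delta L)$ from \cite{Gui86, Lal89, Bor16}, and use that $\delta < 1$ for infinite-area finite-type surfaces. The paper additionally treats the elementary case ($X$ an annulus or once-punctured disk, where $\sP(X)$ has two or zero elements) separately before citing the prime geodesic theorem for the non-elementary case, and justifies $\delta < 1$ via the relation $\lambda_0 = \delta(1-\delta)$ and the absence of a zero $L^2$-eigenvalue; you should add the one-line elementary case since the asymptotic $\ee^{\delta L}/(\delta L)$ is not stated for $\delta = 0$.
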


\begin{proof}
    Since $X$ is not conformally equivalent to one of $\widehat\C, \C, \C\setminus\{0\}$ or closed tori, it admits a complete hyperbolic metric. We have $X = \Gamma\backslash\H$ where $\Gamma$ is a Fuchsian group without elliptic elements. Let $\sP(X)$ be the set of all primitive, oriented, closed geodesics on $X$. By Lemma~\ref{lem:blm-hyp}, the total mass of Brownian loops of all essential homotopy classes is 
    \begin{equation}\label{eq-blm-infty-area}
        \sum_{\gamma\in \sP(X)} \sum_{m=1}^\infty \mu_X (\mc C_X (\g^m)) = \sum_{\gamma\in \sP(X)} \sum_{m=1}^\infty \frac{1}{m} \frac{1}{\ee^{m\ell(\gamma)}-1}.
    \end{equation}
    Denote $N_X(L)=\{\gamma\in\sP(X) \mid \ell(\gamma)\leq L\}$. If $X$ is elementary (which means $X$ is conformally equivalent to an annulus $\m A_r$ or the once-punctured disk in our case), $\sP(X)$ contains only two elements if $X$ is an annulus and none if $X$ is the once-punctured disk. We check immediately that the sum in \eqref{eq-blm-infty-area} is finite. See Remark~\ref{rem:lawler}.
    
    If $X$ is of finite type and non-elementary, prime geodesic theorem \cite{Gui86, Lal89} \cite[Theorem 14.20]{Bor16} says that
    $$N_X(L) \sim \frac{\ee^{\delta L}}{\delta L} \qquad \text{ as } L\to\infty,$$
    where $\delta>0$ is the Hausdorff dimension of the limit set of $\Gamma$. If $\delta>\frac{1}{2}$, then $\delta(1-\delta)$ is the smallest eigenvalue of $- \D$ 
    (acting on $L^2(X)$) on $X$ \cite{Pat76, Sull79, Sull82, Sull84}. In our case, $X$ has infinite area and hence constant valued functions are not $L^2$-eigenfunctions. So it has no zero eigenvalue (since the eigenfunction of zero eigenvalue is of constant value), and $\delta$ is strictly smaller than $1$. Then, the total mass of Brownian loops \eqref{eq-blm-infty-area} is finite.
\end{proof}

\subsection{Zeta-regularized determinant of Laplacian}\label{sec:det}

In this subsection, we consider a closed hyperbolic surface $X$ (i.e., compact and without boundary) and express its zeta-regularized determinant of Laplacian in terms of a sum over homotopy classes of Brownian loop measure (Proposition~\ref{prop:det_loop}).

Let us first recall briefly the definition of the zeta-regularized determinant. 
The spectrum of the positive Laplace--Beltrami operator $-\D$  on $X$ is an increasing sequence 
$$0=\lambda_0 < \lambda_1 \leq \lambda_2 \leq \cdots.$$
The zero eigenvalue $\lambda_0$ has multiplicity one, and the corresponding eigenfunctions are the constant functions. The $\log \det \D$  is supposedly\footnote{More accurately, we mean $\log \det (-\D)$. As only positive operators are considered, we will omit the minus sign before $\D$ for shorter notation.}  $\sum_{k=1}^\infty \log\lambda_k$,  which is not well-defined. We use the zeta-regularization, which is defined by taking
$$\log \detz \D := -\zeta_X'(0)$$
where for $\Re(s)>1$,
$$\zeta_X(s) : = \frac{1}{\Gamma(s)} \int_0^\infty t^{s-1}\sum_{k=1}^\infty \ee^{-t\lambda_k} dt,$$
which has a meromorphic extension to $\C$ and is analytic at $s=0$.

It was remarked in \cite{LeJan2006det,Dub_couplings} that $-\log \detz\D$ can be interpreted as the total mass of the Brownian loop measure. The following expression from \cite{ang2020brownian} gives a precise renormalization procedure, which shows that the zeta-regularized determinant of the Laplacian appears in the constant term in the truncation of the total mass of the Brownian loop measure after removing the small and large loops. 

\begin{thm}[See {\cite[Thm.\,1.3]{ang2020brownian}}] \label{thm-ren-blm}
    For a closed Riemannian surface $(X,g)$, the total mass of Brownian loops with quadratic variation greater than $4t$ and less than $4T$ is given by 
$$\frac{\operatorname{Vol}_g (X)}{4\pi t} - \log \detz \D_g - \frac{\chi(X)}{6} (\log t + \upgamma) + \log T + \upgamma +  O(t) + O(e^{-\a T})$$
as $t \to 0$ and $T \to \infty$, where $\upgamma\approx 0.5772$ is the Euler--Mascheroni constant, $\chi (X) = 2- 2 \operatorname{genus} (X)$ is the Euler characteristic of $X$, and $\a > 0$.
\end{thm}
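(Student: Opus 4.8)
The plan is to reduce the statement to the classical short-time heat-trace expansion on a closed surface together with the Mellin-transform definition of $\detz$. First I would rewrite the left-hand side spectrally. By the convention that a Brownian path run for time $u$ has total quadratic variation $4u$ almost surely, and since total quadratic variation is invariant under time-reparametrization, the loops counted are exactly those of time-length in $(t,T)$ for the rooted loop measure $\mu_X^*=\int_0^\infty\frac{\dd u}{u}\int_X\m W^u_{x\to x}(X)\,\dd\vol_g(x)$; forgetting the root and parametrization preserves total mass, so the quantity to compute is $M(t,T):=\int_t^T u^{-1}\Theta_g(u)\,\dd u$, where $\Theta_g(u):=\int_X p_{X,g}(x,x;u)\,\dd\vol_g(x)=\sum_{k\ge 0}\ee^{-u\lambda_k}$ is the full heat trace (including the zero mode $\lambda_0=0$).

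Next I would feed in the two asymptotic regimes. As $u\to\infty$, separating the $k=0$ term gives $\Theta_g(u)-1=O(\ee^{-\a u})$ for any $0<\a<\lambda_1$. As $u\to 0^+$, the Minakshisundaram--Pleijel expansion $p_{X,g}(x,x;u)=\frac{1}{4\pi u}\bigl(1+\tfrac{u}{6}\tau_g(x)+O(u^2)\bigr)$ together with $\tau_g=2K_g$ and Gauss--Bonnet $\int_X K_g\,\dd\vol_g=2\pi\chi(X)$ yields $\Theta_g(u)=\frac{\operatorname{Vol}_g(X)}{4\pi u}+\frac{\chi(X)}{6}+O(u)$, uniformly on $(0,1]$. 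Setting $\widetilde\theta(u):=\Theta_g(u)-\frac{\operatorname{Vol}_g(X)}{4\pi u}-\frac{\chi(X)}{6}=O(u)$ near $0$, I would split $M(t,T)=\int_t^1+\int_1^T$ and integrate the explicit singular terms against $u^{-1}\,\dd u$; a routine computation extracts the divergent pieces $\frac{\operatorname{Vol}_g(X)}{4\pi t}$, $\log T$, and $-\frac{\chi(X)}{6}\log t$ (the $-\log t$ from $\int_t^1 u^{-1}\,\dd u$ inside $\Theta_g=1+(\Theta_g-1)$ combining with $+\log t$ from the constant $\tfrac{\chi(X)}{6}-1$), and leaves
$$M(t,T)=\frac{\operatorname{Vol}_g(X)}{4\pi t}-\frac{\chi(X)}{6}\log t+\log T+c_X+O(t)+O(\ee^{-\a T}),\qquad c_X:=-\frac{\operatorname{Vol}_g(X)}{4\pi}+\int_0^1\frac{\widetilde\theta(u)}{u}\,\dd u+\int_1^\infty\frac{\Theta_g(u)-1}{u}\,\dd u.$$

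Finally I would identify $c_X$ with $-\log\detz\D_g-\frac{\chi(X)}{6}\upgamma+\upgamma$. For $\Re(s)>1$ one has $\zeta_X(s)\Gamma(s)=\int_0^\infty u^{s-1}(\Theta_g(u)-1)\,\dd u$; splitting at $u=1$ and integrating the singular small-$u$ terms displays this as $\frac{\operatorname{Vol}_g(X)}{4\pi(s-1)}+\frac{\chi(X)/6-1}{s}+h(s)$ with $h$ holomorphic near $0$ and $h(0)=\int_0^1 u^{-1}\widetilde\theta(u)\,\dd u+\int_1^\infty u^{-1}(\Theta_g(u)-1)\,\dd u$, so $c_X=h(0)-\frac{\operatorname{Vol}_g(X)}{4\pi}$. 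Dividing by $\Gamma(s)$ and using $\Gamma(s)^{-1}=s+\upgamma s^2+O(s^3)$ reads off $\zeta_X(0)=\chi(X)/6-1$ and $\zeta_X'(0)=h(0)-\frac{\operatorname{Vol}_g(X)}{4\pi}+\bigl(\tfrac{\chi(X)}{6}-1\bigr)\upgamma$, whence $\log\detz\D_g=-\zeta_X'(0)$ gives exactly the claimed value of $c_X$. I expect the main obstacle to be precisely this last piece of bookkeeping — keeping the $\upgamma$ from the Laurent expansion of $1/\Gamma$ at the origin consistently aligned with the $\chi(X)/6$ from the subleading heat coefficient — and, on the analytic side, justifying the uniform error control: interchanging the eigenvalue sum with the $u$-integral near $u=0$, and an honest remainder estimate $\widetilde\theta(u)=O(u)$ on all of $(0,1]$ (not merely the asymptotic series), which follows from standard parametrix bounds for the heat kernel on a closed manifold.
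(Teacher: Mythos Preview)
The paper does not give its own proof of this theorem; it is quoted verbatim from \cite{ang2020brownian} and used as input for Section~\ref{sec:det}. So there is no in-paper argument to compare against.

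Your proposal is correct and is essentially the canonical argument. The identification $M(t,T)=\int_t^T u^{-1}\Theta_g(u)\,\dd u$ is right (quadratic variation is re-rooting- and reparametrization-invariant, so restricting $\mu_X$ to loops with quadratic variation in $(4t,4T)$ has the same total mass as restricting $\mu_X^*$ to time-length in $(t,T)$). The small-time heat-trace expansion $\Theta_g(u)=\tfrac{\operatorname{Vol}_g(X)}{4\pi u}+\tfrac{\chi(X)}{6}+O(u)$ and the spectral-gap bound $\Theta_g(u)-1=O(\ee^{-\a u})$ give the divergent pieces exactly as you say, and your Mellin bookkeeping is accurate: with $h(0)=\int_0^1 u^{-1}\widetilde\theta(u)\,\dd u+\int_1^\infty u^{-1}(\Theta_g(u)-1)\,\dd u$ one gets $\zeta_X'(0)=h(0)-\tfrac{\operatorname{Vol}_g(X)}{4\pi}+\bigl(\tfrac{\chi(X)}{6}-1\bigr)\upgamma$, hence $c_X=-\log\detz\D_g-\tfrac{\chi(X)}{6}\upgamma+\upgamma$, matching the statement. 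The only analytic points to make honest are the ones you already flag: uniformity of the $O(u)$ remainder on $(0,1]$ (standard for closed manifolds via the parametrix construction) and the exponential decay at infinity from $\lambda_1>0$.
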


We will perform a different truncation according to the length spectrum of $X$.
For this, we consider taking the sum of the Brownian loop measure $\mu_X (\mc C_X (\g))$ over all homotopy classes of $\gamma$, that is, compute 
$$\mu_X (\{\text{homotopically trivial loops}\}) + \sum_{\gamma\in \sP(X)}\sum_{m=1}^\infty \mu_X (\mc C_X (\g^m))$$
where $\sP(X)$ is the set of all primitive oriented closed geodesics on $X$. This does not converge for the following two reasons. First, $\mu_X (\{\text{homotopically trivial loops}\})$ is infinite. Second, denote $N_X(L):=\#\{\gamma\in\sP(X) \mid \ell(\gamma)\leq L\}$ the number of all primitive oriented closed geodesics of length $\leq L$ on $X$, the prime geodesic theorem (see e.g. \cite{Huber61, Ran77}) says that it has the asymptotic behavior 
\begin{equation}\label{eq-pgt}
    N_X(L) = \Li(\ee^L) + \sum_{0<\lambda_j \leq \frac{1}{4}} \Li(\ee^{s_j L}) + O_X(\ee^{\frac{3}{4}L}/L)
\end{equation}
as $L\to\infty$, where the logarithmic integral function $\Li(x)=\int_2^x \frac{\dd t}{\log t} \sim \frac{x}{\log x}$ as $x\to\infty$, and $s_j=\frac{1}{2}+\sqrt{\frac{1}{4}-\lambda_j}$. So by Lemma~\ref{lem:blm-hyp}, the sum $$\sum_{\gamma\in \sP(X)}\sum_{m=1}^\infty \mu_X (\mc C_X (\g^m))=\sum_{\gamma\in \sP(X)}\sum_{m=1}^\infty \frac{1}{m}\frac{1}{\ee^{m\ell(\gamma)}-1}$$ diverges. Theorem~\ref{thm-ren-blm} gives a way to renormalize this summation. Now we aim to give another one. 

The version of the Selberg trace formula for the heat kernel (see, e.g., \cite{Sel56} and \cite[Ch.~9]{Buser10}) says that 
\begin{eqnarray}\label{eq-trace-fml}
    \sum_{j=0}^\infty \ee^{-t\lambda_j} &=& {\rm Area}(X)  \frac{\ee^{-t/4}}{(4\pi t)^{3/2}}  \int_{0}^\infty \frac{r \ee^{-r^2/(4t)}}{\sinh{(r/2)}} \dd r  \nonumber\\
    &&+ \sum_{\gamma\in \sP(X)}\sum_{m=1}^\infty \frac{\ee^{-t/4}}{(4\pi t)^{1/2}} \frac{\ell(\g)}{2\sinh(m\ell(\g)/2)} \ee^{-\frac{(m\ell(\g))^2}{4t}}.
\end{eqnarray}
And directly from this, Naud \cite[Eq. (3)]{Naud23} shows that $-\log \detz\D$ can be written as the sum of length spectrum  
\begin{equation}\label{eq-logdet-length}
    -\log \detz\D = -{\rm Area}(X) E - \upgamma + \int_0^1 \frac{S_X(t)}{t}\, \dd t + \int_1^\infty \frac{S_X(t)-1}{t}\, \dd t
\end{equation}
where $E=(4\zeta'(-1)-1/2+\log(2\pi))/(4\pi) \approx 0.0538$, $\upgamma \approx 0.5772$ is the Euler--Mascheroni constant, and 
$$S_X(t)= \sum_{\gamma\in \sP(X)}\sum_{m=1}^\infty \frac{\ee^{-t/4}}{(4\pi t)^{1/2}} \frac{\ell(\g)}{2\sinh(m\ell(\g)/2)} \ee^{-\frac{(m\ell(\g))^2}{4t}}$$
is the geometric term in \eqref{eq-trace-fml}. Note that $S_X(t)$ is exponentially small as $t\to 0$, and $|S_X(t)-1|$ is exponentially small as $t\to \infty$. We remark here that each term in the sum $S_X(t)$ is actually the integral of heat kernel in \eqref{eq-int-heat-ker} and also appears in the proof of Lemma~\ref{lem:blm-hyp}. We may rewrite \eqref{eq-logdet-length} into the form of Brownian loop measure as follows.

\begin{prop}\label{prop:det_loop} 
Let $X$ be a closed hyperbolic surface, and $\sG(X)$ denote the set of all oriented closed geodesics on $X$. We have
\begin{eqnarray}\label{eq-logdet-blm}
    -\log \detz\D &=& -{\rm Area}(X) E + C + \sum_{\gamma\in \sG(X)\setminus\sP(X)} \mu_X (\mc C_X (\g))  \nonumber\\
    && +\int_{L=0}^\infty \frac{1}{\ee^{L} -1}\ \dd \bigg(N_X(L)-\widetilde\Li(\ee^L)\bigg) 
\end{eqnarray}
where $E$ is the same constant as in \eqref{eq-logdet-length}; $C$ is a universal constant with value $\approx 0.3608$; 
and $\widetilde\Li(x)$ is the cutoff of $\Li(x)$ at $x=2$, i.e.,
\begin{equation*}
    \widetilde\Li(x)= \begin{cases}
        \int_2^x \frac{\dd t}{\log t} & \text{if}\ x\geq 2 \\
        0 & \text{if}\ x< 2.
    \end{cases}
\end{equation*}
Moreover, the summation and integral in \eqref{eq-logdet-blm} converge.
\end{prop}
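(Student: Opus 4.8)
The goal is to massage the length-spectrum formula \eqref{eq-logdet-length} of Naud into the Brownian-loop form \eqref{eq-logdet-blm}. The starting point is to recognize, via the remark preceding the statement, that each summand of $S_X(t)$ is exactly the integral of the heat kernel computed in \eqref{eq-int-heat-ker}; hence, after dividing by $t$ and integrating in $t$ as in \eqref{eq-blm-compute}, the contribution of a single geodesic $\g'$ (primitive or not) is precisely $\mu_X(\mc C_X(\g'))=\frac{1}{m(\g')}\frac{1}{\ee^{\ell(\g')}-1}$. So the plan is: first justify, using Tonelli's theorem and the exponential decay of $S_X(t)$ at both ends (so that the double integral over $(t,L)$ is absolutely convergent once we subtract the divergent pieces), that
\begin{equation*}
    \int_0^1 \frac{S_X(t)}{t}\,\dd t + \int_1^\infty \frac{S_X(t)-1}{t}\,\dd t
    = \sum_{\g\in\sP(X)}\sum_{m=1}^\infty \mu_X(\mc C_X(\g^m)) \;-\; \Big(\text{a divergent normalization}\Big),
\end{equation*}
where the ``$-1$'' subtracted in the second integral is what produces the normalization term. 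The sum $\sum_{\g\in\sP}\sum_{m\ge 1}\mu_X(\mc C_X(\g^m))$ splits as $\sum_{\g\in\sP}\mu_X(\mc C_X(\g))+\sum_{\g\in\sG\setminus\sP}\mu_X(\mc C_X(\g))$; the second piece is exactly the finite sum appearing in \eqref{eq-logdet-blm} (its convergence comes from $\sum_{m\ge 2}\frac{1}{m}\frac{1}{\ee^{m\ell}-1}$ being summable against the prime geodesic theorem, as in the proof of Corollary~\ref{cor:boundary_total}), and the first piece, $\sum_{\g\in\sP}\frac{1}{\ee^{\ell(\g)}-1}=\int_0^\infty\frac{1}{\ee^L-1}\,\dd N_X(L)$, is the divergent integral we must renormalize.

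The second step is to identify the constant $\int_1^\infty \frac{dt}{t}$-type divergence produced by the ``$-1$'' with the divergence of $\int_0^\infty\frac{1}{\ee^L-1}\,\dd\Li(\ee^L)$, so that the difference is the convergent integral $\int_0^\infty \frac{1}{\ee^L-1}\,\dd(N_X(L)-\widetilde\Li(\ee^L))$ appearing in \eqref{eq-logdet-blm}, plus an explicit finite constant. Concretely: $1=\lim_{t\to\infty}S_X(t)$, and one checks (again by unwinding \eqref{eq-int-heat-ker}, \eqref{eq-blm-compute}, or more cleanly by a direct computation with $\Li$) that $\int_0^\infty\frac{dt}{t}$ of the ``geometric kernel with $N_X$ replaced by $\widetilde\Li(\ee^\cdot)$'' differs from $-\upgamma$ plus $\int_0^\infty \frac{1}{\ee^L-1}\,\dd(N_X(L)-\widetilde\Li(\ee^L))$ only by a universal constant; collecting all such universal constants (the $-\upgamma$ in \eqref{eq-logdet-length}, the cutoff of $\Li$ at $2$ versus at some other point, the behavior of $\frac{1}{\ee^L-1}$ near $L=0$ where $\widetilde\Li(\ee^L)=0$) gives the stated $C\approx 0.3608$. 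In practice I would integrate by parts: $\int_0^\infty \frac{1}{\ee^L-1}\,\dd(N_X(L)-\widetilde\Li(\ee^L))$ converges because $N_X(L)-\widetilde\Li(\ee^L)=O(\ee^{3L/4}/L)$ by \eqref{eq-pgt} (there are no small eigenvalues contributing at the top order for a generic surface, but even with them $s_j<1$ so the error is $o(\ee^L)$) while $\frac{1}{\ee^L-1}$ decays like $\ee^{-L}$; and near $L=0$ the measure $\dd N_X$ has no atoms below the systole so the integrand is bounded there.

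\textbf{Convergence of the two displayed pieces.} It remains to record that both the sum $\sum_{\g\in\sG\setminus\sP}\mu_X(\mc C_X(\g))$ and the integral $\int_0^\infty\frac{1}{\ee^L-1}\,\dd(N_X(L)-\widetilde\Li(\ee^L))$ converge. For the sum: $\sum_{\g\in\sG\setminus\sP}\mu_X(\mc C_X(\g))=\sum_{\g\in\sP}\sum_{m\ge 2}\frac1m\frac{1}{\ee^{m\ell(\g)}-1}\le \sum_{\g\in\sP}\frac{1}{\ee^{2\ell(\g)}-1}\cdot\frac{C'}{1-\ee^{-\ell(\g)}}$, and since $N_X(L)\sim \ee^L/L$ the series $\sum_{\g\in\sP}\ee^{-2\ell(\g)}$ converges by Abel summation. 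For the integral: as noted above the integrand is $O(\ee^{-L}\cdot\ee^{3L/4}/L)$ at infinity (bounded near $0$), hence integrable. This last verification, together with pinning down the exact value of $C$, is the only genuinely fiddly part; I expect the main obstacle to be bookkeeping the constants correctly — in particular making sure the choice of cutoff point $x=2$ in $\widetilde\Li$ is matched consistently on both sides so that no stray $\log\log$ or $\Li(2)$ terms are dropped — rather than any conceptual difficulty, since the analytic identity is driven entirely by \eqref{eq-logdet-length} and the heat-kernel computation already carried out in Lemma~\ref{lem:blm-hyp}.
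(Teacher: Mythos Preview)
Your overall strategy matches the paper's exactly: start from Naud's formula \eqref{eq-logdet-length}, split $S_X(t)$ into primitive and non-primitive parts, integrate the non-primitive part termwise via \eqref{eq-blm-compute} to obtain $\sum_{\g\in\sG\setminus\sP}\mu_X(\mc C_X(\g))$, and renormalize the primitive part against $\widetilde\Li$. Your convergence arguments for the sum and for the Stieltjes integral are also essentially those of the paper.

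The one place where your write-up underestimates the work is the step you call ``bookkeeping'': turning the ``$-1$'' in $\int_1^\infty\frac{S_X^p(t)-1}{t}\,\dd t$ into the $\widetilde\Li$-renormalization. You cannot literally match a ``$\int_1^\infty \frac{\dd t}{t}$-type divergence'' against the divergence of $\int_0^\infty\frac{1}{\ee^L-1}\,\dd\widetilde\Li(\ee^L)$, because the latter diverges for a different reason (the $L$-integral, not the $t$-integral), and subtracting two infinities without a common parametrization will not pin down the constant $C$. The paper's device is to represent the constant $1$ \emph{inside} the $L$-integral via the exact identity
\[
1=\int_{L=0}^\infty \frac{\ee^{-t/4}}{(4\pi t)^{1/2}} \frac{L}{2\sinh(L/2)}\,\ee^{-L^2/(4t)}\;\dd\!\left(\int_0^L \frac{2\sinh s}{s}\,\dd s\right),
\]
(a Gaussian integral, since $\frac{L}{2\sinh(L/2)}\cdot\frac{2\sinh L}{L}=2\cosh(L/2)$), so that $S_X^p(t)-1$ becomes a single Stieltjes integral against $\dd\big(N_X(L)-\int_0^L\frac{2\sinh s}{s}\,\dd s\big)$. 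One then swaps $\int_0^L\frac{2\sinh s}{s}\,\dd s$ for $\widetilde\Li(\ee^L)$ at the cost of two explicit absolutely convergent correction integrals (coming from the $\ee^{-L}/L$ discrepancy on $L>\log 2$ and from the full $2\cosh(L/2)$ on $0<L<\log 2$); these corrections, together with $-\upgamma$, produce the numerical value $C\approx 0.3608$. Your phrase ``the geometric kernel with $N_X$ replaced by $\widetilde\Li(\ee^\cdot)$'' is close, but note that $\dd\widetilde\Li(\ee^L)=\frac{\ee^L}{L}\dd L$ does \emph{not} make the kernel integrate to $1$ exactly --- this is precisely where the residual constant hides, and why the paper routes through $\frac{2\sinh s}{s}$ first.
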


\begin{remark}
  The cutoff point $x=2$ of $\widetilde\Li(x)$ is not special. One can choose another cutoff point, which results in the change of constant $C$.
\end{remark}

We note that the integral $\int_{L=0}^\infty 
    \frac{1}{\ee^{L} -1}
    \ \dd N_X(L)$ is formally the total mass of Brownian loops homotopic to a primitive geodesic $\sum_{\g \in \mc P(X)} \mu_X (\mc C(\g))$. Subtracting from it $\int_{L=0}^\infty 
    \frac{1}{\ee^{L} -1}
    \ \dd \widetilde \Li (\ee^L)$ renormalizes the contribution of Brownian loops that are homotopic to a long ($L \gg 1$) primitive geodesic as suggested by \eqref{eq-pgt}. 
    
\begin{proof}
By prime geodesic theorem \eqref{eq-pgt}, $|N_X(L)-\widetilde\Li(\ee^L)|=O_X(\ee^{(1-\epsilon)L})$ as $L\to\infty$ for some $\epsilon>0$ depending on $X$. Hence, the integral in \eqref{eq-logdet-blm} converges by integration by parts. The summation in \eqref{eq-logdet-blm} converges also because of \eqref{eq-pgt} and Lemma~\ref{lem:blm-hyp}.

Then, we rewrite each term in \eqref{eq-logdet-length}. 
We split $S_X(t)$ into the parts of primitive geodesics and non-primitive geodesics. Denote
$$S_X^p(t)= \sum_{\gamma\in \sP(X)} \frac{\ee^{-t/4}}{(4\pi t)^{1/2}} \frac{\ell(\g)}{2\sinh(\ell(\g)/2)} \ee^{-\frac{\ell(\g)^2}{4t}}$$
be the part with primitive geodesics. The integral of part with non-primitive geodesics is easy to compute (same as \eqref{eq-blm-compute}) and will converge directly without renormalization: 
\begin{eqnarray}\label{eq-int-S^<eps}
    \int_0^\infty \frac{S_X(t)-S_X^p(t)}{t}\, \dd t &=& 
    \sum_{\gamma\in \sP(X)}\sum_{m=2}^\infty  \int_0^\infty \frac{1}{t}\frac{\ee^{-t/4}}{(4\pi t)^{1/2}} \frac{\ell(\g)}{2\sinh(m\ell(\g)/2)} \ee^{-\frac{(m\ell(\g))^2}{4t}}\, \dd t \nonumber\\
    &=& \sum_{\gamma\in \sP(X)}\sum_{m=2}^\infty  \frac{1}{m}\frac{1}{\ee^{m\ell(\gamma)}-1} \nonumber\\
    &=& \sum_{\gamma\in \sG(X)\setminus\sP(X)} \mu_X (\mc C_X (\g)).
\end{eqnarray}
For $S_X^p(t)$ the primitive part, 
\begin{eqnarray}\label{eq-int-SX/t}
    \int_0^1 \frac{S_X^p(t)}{t}\, \dd t &=& \int_0^1 \frac{1}{t} \int_{L=0}^\infty \frac{\ee^{-t/4}}{(4\pi t)^{1/2}} \frac{L}{2\sinh(L/2)} \ee^{-\frac{L^2}{4t}} \dd N_X(L)  \ \dd t \nonumber\\
    &=& \int_{L=0}^\infty  \frac{1}{2}\frac{1}{\ee^{L}-1}  \left( -{\rm Erf}(\tfrac{L-t}{2\sqrt{t}}) - e^{L} {\rm Erf}(\tfrac{L+t}{2\sqrt{t}}) \right)\bigg|_{t=0}^1  \dd N_X(L) 
\end{eqnarray}
where the error function ${\rm Erf}(x)=\frac{2}{\sqrt{\pi}}\int_0^x e^{-t^2}\, \dd t$, and ${\rm Erf}(+\infty)=-{\rm Erf}(-\infty)=1$. Then we compute $\int_1^\infty\frac{1}{t}(S_X^p(t)-1)\, \dd t$. We know that 
\begin{equation*}
    1=\int_{L=0}^\infty \frac{\ee^{-t/4}}{(4\pi t)^{1/2}} \frac{L}{2\sinh(L/2)} \ee^{-\frac{L^2}{4t}} \ \dd\bigg( \int_0^L \frac{2\sinh s}{s} \dd s \bigg).
\end{equation*}
So we may write
\begin{eqnarray*}
    S_X^p(t)-1 &=& -1+\int_{L=0}^\infty  \frac{\ee^{-t/4}}{(4\pi t)^{1/2}} \frac{L}{2\sinh(L/2)} \ee^{-\frac{L^2}{4t}} \dd(N_X(L)) \\
    &=& \int_{L=0}^\infty  \frac{\ee^{-t/4}}{(4\pi t)^{1/2}} \frac{L}{2\sinh(L/2)} \ee^{-\frac{L^2}{4t}} \dd \bigg( N_X(L)-\widetilde\Li(\ee^L) \bigg) \\
    && +\int_{L=0}^\infty  \frac{\ee^{-t/4}}{(4\pi t)^{1/2}} \frac{L}{2\sinh(L/2)} \ee^{-\frac{L^2}{4t}} \dd \bigg(\widetilde\Li(\ee^L)-\int_0^L \frac{2\sinh s}{s} \dd s \bigg) \\
    &=& \int_{L=0}^\infty  \frac{\ee^{-t/4}}{(4\pi t)^{1/2}} \frac{L}{2\sinh(L/2)} \ee^{-\frac{L^2}{4t}} \dd \bigg( N_X(L)-\widetilde\Li(\ee^L) \bigg) \\
    && +\int_{\log 2}^\infty  \frac{\ee^{-t/4}}{(4\pi t)^{1/2}} \frac{\ee^{-L}}{2\sinh(L/2)} \ee^{-\frac{L^2}{4t}} \dd L \\
   && -\int_0^{\log 2}  \frac{\ee^{-t/4}}{(4\pi t)^{1/2}} 2\cosh(L/2) \ee^{-\frac{L^2}{4t}} \dd L.
\end{eqnarray*}
Then 
\begin{align}\label{eq-int-(SX-1)/t}
   & \int_1^\infty \frac{S_X^p(t)-1}{t}\, \dd t \\
    &= C_1 + \int_1^\infty \int_{L=0}^\infty  \frac{1}{t}\frac{\ee^{-t/4}}{(4\pi t)^{1/2}} \frac{L}{2\sinh(L/2)} \ee^{-\frac{L^2}{4t}} \dd \bigg( N_X(L)-\widetilde\Li(\ee^L) \bigg)\ \dd t \nonumber\\
    &= C_1 +  \int_{L=0}^\infty  \frac{1}{2}\frac{1}{\ee^{L}-1} \left( -{\rm Erf}(\tfrac{L-t}{2\sqrt{t}}) - \ee^{L} {\rm Erf}(\tfrac{L+t}{2\sqrt{t}}) \right)\bigg|_{t=1}^\infty  \dd \bigg( N_X(L)-\widetilde\Li(\ee^L) \bigg) 
\end{align}
where 
\begin{eqnarray*}
    C_1 &=& \int_1^\infty \frac{1}{t} \int_{\log 2}^\infty  \frac{\ee^{-t/4}}{(4\pi t)^{1/2}} \frac{\ee^{-L}}{2\sinh(L/2)} \ee^{-\frac{L^2}{4t}} \, \dd L \, \dd t \\
    && - \int_1^\infty \frac{1}{t} \int_0^{\log 2}  \frac{\ee^{-t/4}}{(4\pi t)^{1/2}} 2\cosh(L/2) \ee^{-\frac{L^2}{4t}}  \, \dd L \,\dd t \\
    &=& \int_{\log 2}^\infty \frac{1}{2L \ee^L (\ee^L -1)} \left( {\rm Erf}(\tfrac{L-1}{2})+1 + \ee^{L} \left({\rm Erf}(\tfrac{L+1}{2})-1\right) \right) \dd L \\
    && -\int_0^{\log 2}  \frac{1+\ee^{-L}}{2L} \left( {\rm Erf}(\tfrac{L-1}{2})+1 + \ee^{L} \left({\rm Erf}(\tfrac{L+1}{2})-1\right) \right) \dd L .
\end{eqnarray*}
Combining \eqref{eq-int-SX/t} and \eqref{eq-int-(SX-1)/t} we get 
\begin{eqnarray}
    &&\int_0^1 \frac{S_X^p(t)}{t}\, \dd t + \int_1^\infty \frac{S_X^p(t)-1}{t}\, \dd t \nonumber\\ 
    &&= \int_{L=0}^\infty  \frac{1}{2}\frac{1}{\ee^{L}-1}  \left( -{\rm Erf}(\tfrac{L-t}{2\sqrt{t}}) - \ee^{L} {\rm Erf}(\tfrac{L+t}{2\sqrt{t}}) \right)\bigg|_{t=0}^\infty  \dd \bigg( N_X(L)-\widetilde\Li(\ee^L) \bigg) \nonumber\\
    &&\quad +C_1 + \int_{L=0}^\infty  \frac{1}{2}\frac{1}{\ee^{L}-1} \left( -{\rm Erf}(\tfrac{L-t}{2\sqrt{t}}) - \ee^{L} {\rm Erf}(\tfrac{L+t}{2\sqrt{t}}) \right)\bigg|_{t=0}^1  \dd \bigg(\widetilde\Li(\ee^L) \bigg) \nonumber\\
    &&= \int_{L=0}^\infty  \frac{1}{\ee^{L}-1} \dd \bigg( N_X(L)-\widetilde\Li(\ee^L) \bigg)  +  C_2
\end{eqnarray}
where 
\begin{eqnarray*}
    C_2 &=& C_1 + \int_{\log 2}^\infty   
    \frac{\ee^L}{2L} \frac{1}{\ee^{L}-1} \left(1-{\rm Erf}(\tfrac{L-1}{2}) + \ee^{L}\left(1-{\rm Erf}(\tfrac{L+1}{2})\right) \right)  \dd L \\
    &=& \int_{\log 2}^\infty \frac{1+\ee^{-L}}{2L} \left(1-{\rm Erf}(\tfrac{L-1}{2}) + \ee^{L}\left(1-{\rm Erf}(\tfrac{L+1}{2})\right) \right)  \dd L \\
    && +\int_{\log 2}^\infty  \frac{1}{L \ee^L (\ee^L -1)} \dd L \\
    && -\int_0^{\log 2}  \frac{1+\ee^{-L}}{2L} \left( {\rm Erf}(\tfrac{L-1}{2})+1 + \ee^{L} \left({\rm Erf}(\tfrac{L+1}{2})-1\right) \right) \dd L \\
    &\approx& 0.9380 .
\end{eqnarray*}
So
\begin{eqnarray}
    &&\int_0^1 \frac{S_X(t)}{t}\, \dd t + \int_1^\infty \frac{S_X(t)-1}{t}\, \dd t \nonumber\\
    &&= \int_0^\infty \frac{S_X(t)-S_X^p(t)}{t}\,  \dd t + \int_0^1 \frac{S_X^p(t)}{t}\, \dd t + \int_1^\infty \frac{S_X^p(t)-1}{t}\, \dd t  \nonumber\\ 
    &&= C_2 + \sum_{\gamma\in \sG(X)\setminus\sP(X)} \mu_X (\mc C_X (\g)) + \int_{L=0}^\infty  \frac{1}{\ee^{L}-1} \dd \bigg( N_X(L)-\widetilde\Li(\ee^L) \bigg).
\end{eqnarray}
And then by \eqref{eq-logdet-length}, the proof is complete with the constant $C=-\upgamma +C_2 \approx 0.3608$.
\end{proof}





\appendix 

\section{Basics on Brownian motion on Riemannian manifolds} \label{sec:basics_BM}
Let us recall first what a standard Brownian motion is and then briefly introduce the Brownian motion on Riemannian manifolds. We only aim to give an intuitive description of them and refer the readers to the more detailed textbook on the topic, e.g., \cite{morters2010brownian,hsu_book}. 

The \emph{Brownian motion} in $\m R^n$ starting from $x = (x_1, x_2, \ldots, x_n) \in \m R^n$ is a continuous stochastic process $(B_t)_{t \ge 0}$ in $\m R^n$, such that the $k$-th coordinate $B^{(k)}_t $ a standard one-dimensional Brownian motion starting from $x_k$, and all coordinates are independent. In other words, it satisfies
\begin{enumerate}
    \item for all $k \in  \{1,\ldots, n\}$, $B_0^{(k)} = x_k$, 
    \item for all $k \in  \{1,\ldots, n\}$ and $s <t $, $B_t^{(k)} - B_s^{(k)}$ has the Gaussian law $\mc N (0,t-s)$, 
    \item for all $0 = t_0< t_1 < t_2 < \cdots < t_l$, the family of random variables $$\left\{B_{t_i}^{(k)} - B_{t_{i-1}}^{(k)} \,|\, k \in \{1,\ldots, n\}, i \in \{ 1,\ldots l\}\right\}$$ is independent,
    \item $t \mapsto B_t$ is almost surely continuous.
\end{enumerate}
We will consider the Brownian motion \emph{run at speed $2$}, that is $W_t = B_{2t}$, and consider $W$ as a random variable in the space of $C^0 ([0,\infty), \m R^n)$ endowed with the topology of uniform convergence on compact subsets of $[0,\infty)$. The advantage of considering the speed $2$ Brownian motion is that its diffusion generator is given by the Laplacian $\D = \sum_{k = 1}^n \partial_{kk}$ (instead of $\D/2$). In fact, it is straightforward to check that the distribution of $W_t$ starting from $x \in \m R^n$ is given by 
$$\m P_x [W_t \in \dd y] = p (x, y; t) \,\dd y = \frac{1}{(4\pi t)^{n/2}} \exp \left(
\frac{-\norm{y-x}^2}{4t}\right) \prod_{k = 1}^n \dd y_k$$
where $p(\cdot, \cdot; t) = \ee^{t \D}$ is the heat kernel at time $t$ and $(p(\cdot, \cdot; t))_{t \ge 0}$ is a semigroup. Moreover, from the definition of Brownian motion, the process $W$ is Markov and satisfies that the conditional law of $W_{t+s}$ given $W|_{[0,s]}$ is $p(W_s, y; t) \, \dd y$. So we also call $p (\cdot, \cdot; t)$ the \emph{transition kernel} of Brownian motion.

From this, one may define the Brownian motion on $\m R^n$ directly from the diffusion generator $\D$. More precisely, we determine from $\D$ the semigroup of heat kernels $(p (\cdot, \cdot; t))_{t \ge 0}$. This allows us to define the finite dimensional marginals, namely the joint law of $(W_0, W_{t_1}, \ldots, W_{t_l})$, using the heat kernel as the transition kernel. By the semigroup property of the heat kernel, we refine the sequence $0 < t_1 < \cdots < t_l$ and define the law of $W$ on a countable dense set of times $I$. Then Kolmogorov extension theorem shows that this construction has a continuous modification, that is, a probability distribution on $C^0 ([0,\infty), \m R^n)$ which has the same finite dimensional marginals when restricted to any finite subset of $I$.

This viewpoint allows us to define the Brownian motion $W$ run at speed $2$ on a Riemannian manifold $(X, g)$ as the diffusion generated by the Laplace--Beltrami operator $\D_g$ where
$$\D_g (f)= \operatorname{div}_g \operatorname{grad}_g (f) = \frac{1}{\sqrt{|g|}} \sum_{i, j = 1}^n \partial_i (\sqrt{|g|} g^{ij} \partial_j f),$$
using the associated heat kernel semigroup $(p_{X,g} (\cdot, \cdot ;t))_{t \ge 0}$ as the transition kernel. 
The book \cite{Chavel} by Chavel contains an extensive discussion on the
heat kernel on a Riemannian manifold. Moreover, if $(X,g)$ is complete and whose Ricci curvature is bounded from below, then the Brownian motion is defined for all time, namely, $\int p_{X,g} (x, y; t) \,\dd \vol (y) = 1$ for all $x \in X$ and $t \ge 0$ (in this case, we say that $(X,g)$ is \emph{stochastically complete}). See \cite[Sec. 4]{hsu_book}.

\bigskip 

We will restrict ourselves to the case of \emph{surfaces} for which the description of Brownian motion can be simplified and made very concrete, thanks to the fact that if in a local coordinate $g = \ee^{2\s} (\dd x_1^2 + \dd x_2^2)$, then $\D_g =  \ee^{-2\s} (\partial_{11} + \partial_{22})$ (this relation only holds in two-dimension). This implies that if $\s \in C^\infty (\O, \m R)$ such that $\O \subset \m R^2$ is a domain and $g =  \ee^{2\s} (\dd x_1^2 + \dd x_2^2)$ is a complete metric on $\O$ with bounded curvature, the standard Brownian motion $W$ on $\m R^2$ run at speed $2$ and the Brownian motion $\widehat W$ on $(\O, g)$ run at speed $2$, both starting from $x \in \O$ and defined for infinite time, can be coupled by a \emph{time-change}.

More precisely, we let $\m W_x$ be the law of $W$ (a probability measure on $C^0 ([0,\infty), \m R^2)$), define for each realization of $W$ and $t \ge 0$,
$$\widehat W_t := W_s \quad \text{where} \quad s = \inf\left\{r \ge 0\,\Big|\, \int_0^r \ee^{2\s (W_l)} \dd l \ge t \right\}. $$
We obtain a continuous function $\widehat W : \m R_+ \to \O$ which is a time-change of $W$ (in fact, $W$ stopped at $\partial \O$ suffices).
The pushforward measure of $\m W_x$ by the map $W \mapsto \widehat W$, is the law of the Brownian motion on $(\O,g)$ starting from $x$.

The Brownian motion on a more general complete Riemannian surface $(X,g)$ is the projection of a Brownian motion by a locally isometric universal covering map $\tilde X \to X$, where $\tilde X = \m H$ or $\m C = \m R^2$ or $\m S^2$, equipped with a conformal metric $g =  \ee^{2\s} (\dd x_1^2 + \dd x_2^2)$.

If $X' \subset X$ is a subsurface of $(X,g)$, then the law of Brownian motion starting from $x \in X'$ killed at $\partial X'$ is the probability measure on the space of continuous functions induced from the Brownian motion $\m W_x$ on $X$ by restricting the continuous path up to the first time exiting $X'$. This can also be obtained using the heat kernel with Dirichlet boundary condition on $X'$ as the transition kernel \cite[Prop. 4.1.3]{hsu_book}.  

\bigskip

\textbf{Acknowledgments. }
We would like to thank the anonymous referees for constructive comments, Jayadev Athreya, Fr\'{e}d\'{e}ric Naud, Minjae Park, Bram Petri, Steffen Rohde, Fredrik Viklund, Wendelin Werner, Yunhui Wu, and Anton Zorich for the helpful discussions and/or comments on an earlier version of the manuscript, and Yves Le Jan for pointing us to the reference \cite{LeJan_BloopTop}.
   This project is funded by the European Union (ERC, RaConTeich, 101116694)\footnote{Views and opinions expressed are however those of the author(s) only and do not necessarily reflect those of the European Union or the European Research Council Executive Agency. Neither the European Union nor the granting authority can be held responsible for them. }.

\bibliographystyle{plain}
\bibliography{ref}
\end{document}